\tikzstyle{edge}=[shorten <=2pt, shorten >=2pt, >=stealth, line width=1.5pt]
\tikzstyle{vertex}=[circle, fill=white, draw, minimum size=6pt, inner sep=0pt,
\newtheorem{theorem}{Theorem}
\newtheorem{definition}{Definition}
\newtheorem{conjecture}{Conjecture}
\newtheorem{lemma}[theorem]{Lemma}
\newtheorem{corollary}[theorem]{Corollary}
\newtheorem{observation}[theorem]{Observation}
\newtheorem{example}[theorem]{Example}
\newtheorem{question}[theorem]{Question}
\newtheorem{problem}[theorem]{Problem}
\DeclareMathOperator{\Th}{Th}
\DeclareMathOperator{\Sep}{Sep}
\DeclareMathOperator{\Bet}{Betw}
\DeclareMathOperator{\Age}{Age}
\newcommand{\bA}{\mathbb A}
\newcommand{\bB}{\mathbb B}
\DeclareMathOperator{\Csp}{CSP}
\DeclareMathOperator{\Pcsp}{PCSP}
\DeclareMathOperator{\UMI}{\textbf{UM1}}
\DeclareMathOperator{\UMII}{\textbf{UM2}}
\DeclareMathOperator{\UMIII}{\textbf{UM3}}
\DeclareMathOperator{\UMIV}{\textbf{UM4}}
\DeclareMathOperator{\UMV}{\textbf{UM5}}
\DeclareMathOperator{\UMVI}{\textbf{UM6}}
\DeclareMathOperator{\UMVII}{\textbf{UM7}}
\DeclareMathOperator{\UMVIII}{\textbf{UM8}}
\DeclareMathOperator{\UEI}{\textbf{UE1}}
\DeclareMathOperator{\UEII}{\textbf{UE2}}
\DeclareMathOperator{\UBI}{\textbf{UB1}}
\DeclareMathOperator{\UBII}{\textbf{UB2}}
\DeclareMathOperator{\UBIII}{\textbf{UB3}}
\DeclareMathOperator{\UBIV}{\textbf{UB4}}
\DeclareMathOperator{\USI}{\textbf{US1}}
\DeclareMathOperator{\USII}{\textbf{US2}}
\DeclareMathOperator{\USIII}{\textbf{US3}}
\DeclareMathOperator{\USIV}{\textbf{US4}}
\DeclareMathOperator{\EX}{\textbf{EX}}
\DeclareMathOperator{\EXIa}{\textbf{EX1a}}
\DeclareMathOperator{\EXIb}{\textbf{EX1b}}
\DeclareMathOperator{\EXIc}{\textbf{EX1c}}
\DeclareMathOperator{\EXIIa}{\textbf{EX2a}}
\DeclareMathOperator{\EXIIb}{\textbf{EX2b}}
\DeclareMathOperator{\EXIIc}{\textbf{EX2c}}
\DeclareMathOperator{\EXIIIa}{\textbf{EX3a}}
\DeclareMathOperator{\EXIIIb}{\textbf{EX3b}}
\DeclareMathOperator{\EXIIIc}{\textbf{EX3c}}
\DeclareMathOperator{\EXIIId}{\textbf{EX3d}}
\DeclareMathOperator{\EXIVa}{\textbf{EX4a}}
\DeclareMathOperator{\EXIVb}{\textbf{EX4b}}
\DeclareMathOperator{\EXIVc}{\textbf{EX4c}}
\DeclareMathOperator{\EXIVd}{\textbf{EX4d}}
\DeclareMathOperator{\EXIVe}{\textbf{EX4e}}
\DeclareMathOperator{\EXIVf}{\textbf{EX4f}}
\title{The Generic Circular Triangle-Free Graph}
\author[1]{Manuel Bodirsky\thanks{manuel.bodirsky@tu-dresden.de}}
\author[1]{Santiago Guzm\'an-Pro\thanks{santiago.guzman\_pro@tu-dresden.de\\
Both authors have been funded by the European Research Council (Project POCOCOP, ERC Synergy Grant 101071674). Views and opinions expressed are however those of the authors only and do not necessarily reflect those of the European Union or the European Research Council Executive Agency. Neither the European Union nor the granting authority can be held responsible for them.}}
\affil[1]{Institut f\"ur Algebra, TU Dresden}
\date{\today}
\begin{document}

\maketitle

\begin{abstract}
  
    In this paper, we introduce the generic circular triangle-free graph $\mathbb C_3$
    and propose a finite axiomatization of its first order theory. In particular, our main
    results show that a countable graph $G$ embeds into $\mathbb C_3$ if and only if it is a
    $\{K_3, K_1 + 2K_2, K_1+C_5, C_6\}$-free graph. As a byproduct of this result, we obtain
    a geometric characterization of finite $\{K_3, K_1 + 2K_2, K_1+C_5, C_6\}$-free graphs, and
    the (finite) list of minimal obstructions of unit Helly circular-arc graphs with independence
    number strictly less than three.
    
    The circular chromatic number $\chi_c(G)$ is a refinement of the classical
    chromatic number $\chi(G)$. 
    We construct $\mathbb C_3$ so that a graph $G$ has circular chromatic number strictly less
    than three if and only if $G$ maps homomorphically to $\mathbb C_3$.
    We build on our main results to show that $\chi_c(G) < 3$ if and only if $G$ can be extended to
    a $\{K_3, K_1 + 2K_2, K_1+C_5, C_6\}$-free graph, and in turn, we use this result to reprove an
    old characterization of $\chi_c(G) < 3$ due to Brandt (1999). Finally, we answer a question recently asked by
    Guzm\'an-Pro, Hell, and Hern\'andez-Cruz by showing that the problem of deciding for a given
    finite graph $G$ whether $\chi_c(G) < 3$ is NP-complete.
    
     
\end{abstract}



\section{Introduction}

The \textit{chromatic number} $\chi(G)$ of a graph $G$ is a well-known and thoroughly studied graph parameter.
A closely related parameter is the \textit{circular chromatic number} $\chi_c(G)$ of a graph $G$, which is defined
in terms of \textit{circular colourings}. Given $r\in \mathbb R^+$, a \textit{circular $r$-colouring} of a graph
$G$ is a function $f\colon V(G)\to S^1$ from the vertices of $G$ to the unit circle such that for every
edge $xy$ of $G$ both circular arcs defined by $f(x)$ and $f(y)$ have length at least $1/r$,
and
\[
\chi_c(G) = \inf\{r\colon \text{ there is a circular } r\text{-colouring of } G\}.
\]
It is straightforward to notice that every $k$-colourable graph admits a circular $k$-colouring,
so $\chi_c(G) \le \chi(G)$. Moreover, Zhu~\cite[Theorem 1.1]{zhuDM229} proved that these parameters mutually
bound each other by the inequalities $$\chi(G) -1 < \chi_c(G) \le \chi(G).$$
It is then
natural to ask for which graphs $G$ the equality $\chi_c(G) = \chi(G)$
holds, or, phrased differently,  for which graphs $G$ the strict inequality $\chi_c(G)
< \chi(G)$ holds.  This question was first posed by Vince~\cite{vinceJGT12}  and
investigated in~\cite{guichardJGT17,hatamiJGT47}. It was proved by
Guichard~\cite{guichardJGT17} that
testing whether $\chi(G) = \chi_c(G)$ is NP-hard, and later
Hatami and Tuserkani~\cite{hatamiJGT47} proved that this problem remains
NP-hard even if the chromatic number of $G$ is known. It also follows from their
proof  that deciding whether $\chi_c(G) < 4$ is NP-complete (see e.g.,~\cite[Theorem 21]{guzmanAMC438}),
but leaves open the complexity of testing $\chi_c(G) < n$ for other integers $n$. In particular, it
was asked in~\cite{guzmanAMC438} to determine the complexity of this problem in the case $n = 3$.

In spite of the NP-hardness of testing $\chi_c(G) < \chi(G)$, there are some interesting
characterizations of the graph classes defined by $\chi_c(G) < n$ for a fixed positive integer $n$. 
For instance, in~\cite{guzmanAMC438}, the authors characterize these graph classes
by means of forbidden circular orderings. Independently, and in a different
context, Brandt~\cite{brandtCPC8} proved the following characterization of graphs
$G$ with $\chi_c(G) < 3$ --- we state it  as in Theorem 3.5 from~\cite{zhuDM229},
and in Figure~\ref{fig:Petersen} we depict the Petersen graph $H_{10}$, and the
Petersen graph minus a vertex $H_{10}-v$.

\begin{theorem}\cite{brandtCPC8}\label{thm:brandt}
    The circular chromatic number of a graph $G$ is strictly less than $3$ if and only
    if there is a maximal triangle-free supergraph $G'$ of $G$ that does not contain
    $H_{10}- v$ as subgraph.
\end{theorem}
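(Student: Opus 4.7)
My plan is to reduce Brandt's characterization to the main results of the paper. Since $\chi_c(G) < 3$ if and only if $G$ admits a $\{K_3, K_1+2K_2, K_1+C_5, C_6\}$-free supergraph on the same vertex set (announced in the abstract and established before this theorem), the work remaining is purely combinatorial: to match the forbidden-induced-subgraph description with the forbidden-subgraph description in Brandt's statement. Concretely, I propose to prove the following bridge lemma: a maximal triangle-free graph $G'$ contains $H_{10}-v$ as a subgraph if and only if it contains one of $K_1+2K_2$, $K_1+C_5$, $C_6$ as an induced subgraph.

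For the bridge lemma, the easy direction is a finite check on $H_{10}-v$ itself: exhibit one of $K_1+2K_2$, $K_1+C_5$, $C_6$ as an induced subgraph of the Petersen graph minus a vertex, so that any graph containing $H_{10}-v$ inherits that obstruction. The harder direction uses the defining property of maximal triangle-free graphs, namely that every pair of non-adjacent vertices has a common neighbor (otherwise the non-edge could be added without creating a triangle). Starting from an induced $C_6$, $K_1+2K_2$, or $K_1+C_5$ in $G'$, I apply this property repeatedly to the non-edges of the obstruction; each choice of common neighbor is heavily constrained by triangle-freeness, and I expect to show that the resulting forced configuration always contains $H_{10}-v$.

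With the bridge lemma in hand, Brandt's theorem follows in both directions. For the backward direction, a maximal triangle-free supergraph $G' \supseteq G$ without $H_{10}-v$ is, by the lemma, $\{K_3, K_1+2K_2, K_1+C_5, C_6\}$-free, so the main characterization gives $G' \to \mathbb C_3$ and hence $\chi_c(G) \le \chi_c(G') < 3$. For the forward direction, I start with a $\{K_3, K_1+2K_2, K_1+C_5, C_6\}$-free supergraph $H$ of $G$ on the same vertex set, extend $H$ to a maximal triangle-free supergraph $G'$, and invoke the bridge lemma in the opposite direction to see that $G'$ avoids $H_{10}-v$. To justify that the maximal-triangle-free closure can be performed without introducing a fresh induced $K_1+2K_2$, $K_1+C_5$, or $C_6$, the natural plan is to perform the extension inside $\mathbb C_3$: embed $H$ via the main embedding theorem, extend within the universal structure using its homogeneity, and then restrict to a maximal triangle-free overgraph on $V(G)$.

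The main obstacle I foresee is the hard direction of the bridge lemma, which requires a separate case analysis for each of the three obstructions. At each step the maximality property leaves apparent freedom in choosing the common neighbors, so the crux is to show that every consistent choice, continued by maximality and triangle-freeness, eventually produces $H_{10}-v$. A related subtlety, on the forward side of Brandt's theorem, is ensuring that the maximal-triangle-free closure of the chosen $F$-free supergraph $H$ remains $\{K_1+2K_2, K_1+C_5, C_6\}$-free; this is where the universal and homogeneous structure of $\mathbb C_3$ plays the essential role.
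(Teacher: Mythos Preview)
Your overall decomposition matches the paper's: the same bridge lemma, with its hard direction proved just as you outline (this is Lemma~\ref{lem:extension->free}: from an induced $C_6$ one uses maximality to find the three common neighbours of opposite pairs, and the nine vertices contain $H_{10}-v$; the $K_1+2K_2$ and $K_1+C_5$ cases reduce to the $C_6$ case). Two places, however, do not go through as written.

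The easy direction of your bridge is misstated. Exhibiting an induced $C_6$ inside $H_{10}-v$ does \emph{not} imply that a triangle-free $G'$ containing $H_{10}-v$ merely as a subgraph has an induced $C_6$: extra edges in $G'$ among those six vertices could destroy inducedness (the three long diagonals of a $6$-cycle create no triangle on their own). The correct argument, which the paper states in one line, is that for a specific $6$-cycle in $H_{10}-v$ every possible chord closes a triangle with one of the three remaining vertices of $H_{10}-v$; hence triangle-freeness of $G'$ forces that $6$-cycle to be induced. This is easily repaired, but your stated reasoning is invalid.

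The real gap is your forward step: extending the $\mathcal F$-free supergraph $H$ to a maximal triangle-free graph on the \emph{same} vertex set while staying $\mathcal F$-free. Embedding $H$ into $\mathbb C_3$ and ``restricting back to $V(G)$'' simply returns $H$; homogeneity moves points around but cannot manufacture new edges on a fixed finite vertex set, so nothing in the $\mathbb C_3$ machinery produces the required edge-extension. The paper proves this step by a direct combinatorial argument (Lemmas~\ref{lem:.--}, \ref{lem:C6}, \ref{lem:free->extension}): in a point-incomparable $\{K_3,K_1+2K_2,K_1+C_5,C_6\}$-free graph, any non-edge whose addition keeps the graph triangle-free also keeps it $\{K_1+2K_2,K_1+C_5,C_6\}$-free, so every maximal-$\mathcal F$-free spanning extension is already maximal triangle-free. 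That lemma is the substantive content of Section~\ref{sec:brandt} and is not a consequence of the embedding theorem or of homogeneity.
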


\begin{figure}[ht!]
\centering
\begin{tikzpicture}[scale = 0.8]

  \begin{scope}
    \node [vertex] (1) at (90:1) {};
    \node [vertex] (2) at (162:1) {};
    \node [vertex] (3) at (234:1) {};
    \node [vertex] (4) at (306:1) {};
    \node [vertex] (5) at (18:1) {};

    \node [vertex] (a) at (90:2) {};
    \node [vertex] (b) at (162:2) {};
    \node [vertex] (c) at (234:2) {};
    \node [vertex] (d) at (306:2) {};
    \node [vertex] (e) at (18:2) {};

    \foreach \from/\to in {1/3, 3/5, 5/2, 2/4, 4/1,
    a/b, b/c, c/d, d/e, e/a, 1/a, 2/b, 3/c, 4/d, 5/e}     
    \draw [edge] (\from) to (\to);
    \node (L1) at (0,-2.2) {$H_{10}$};
    
  \end{scope}
  
  \begin{scope}[xshift=8cm]
    \node (L2) at (0,-2.2)  {$H_{10}-v$};
    \node [vertex] (1) at (0,0) {};
    \node [vertex] (0) at (-1.25,0) {};
    \node [vertex] (2) at (1.25,0) {};

    \node [vertex] (11) at (0,1.7) {};
    \node [vertex] (00) at (-1.25,1.7) {};
    \node [vertex] (22) at (1.25,1.7) {};

    \node [vertex] (b) at (0,-1.7) {};
    \node [vertex] (a) at (-1.25,-1.7) {};
    \node [vertex] (c) at (1.25,-1.7) {};
    
    \foreach \from/\to in {0/00, 0/a, 1/11, 1/b, 2/22, 2/c,
    00/b, 11/a, 11/c, 22/b}     
    \draw [edge] (\from) to (\to);

    \draw [edge] (c) to [bend right = 15] (00);
    \draw [edge] (22) to [bend right = 15] (a);

  \end{scope}

\end{tikzpicture}
\caption{To the left, the Petersen graph $H_{10}$, and to the right, the graph $H_{10}-v$.}
\label{fig:Petersen}
\end{figure}
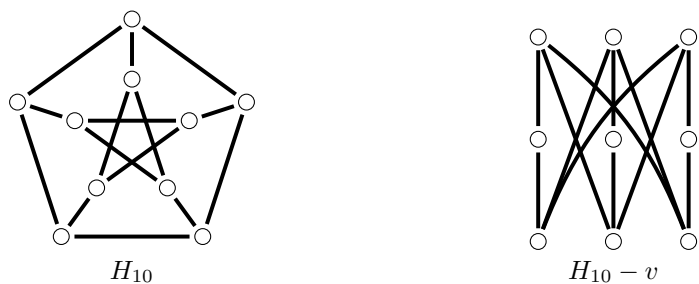

According to Zhu~\cite{zhuDM229}, the characterization of graphs with circular
chromatic number strictly less than $3$  from Theorem~\ref{thm:brandt} seems mysterious,
and it is not clear whether it can be extended to graphs with circular chromatic number
strictly less that $n$ for arbitrary integers $n$.

It follows form the definition of $\chi_c$ that a graph $G$ has circular chromatic number strictly less than
$3$ if and only if there is function $f\colon V(G) \to S^1$ such that for every edge $xy\in E(G)$ 
both circular arcs with end-points $f(x)$ and $f(y)$ have length strictly larger that $1/3$;
equivalently, both angles between $f(x)$ and $f(y)$ are strictly larger than $2\pi/3$. 

This  inspires the definition of the \textit{generic circular triangle-free graph} 
$\mathbb C_3$. The vertex set of $\mathbb C_3$ is a countable dense subset of the unit circle $S^1 \subseteq {\mathbb R}^2$
such that any two distinct points lie at
a rational angle and there is an edge $xy$ if and only if  both circular arcs with end-points
$x$ and $y$ have length strictly larger that $1/3$ (any choice of such a subset leads to the same graph,
up to isomorphism; this will follow from Theorem~\ref{thm:ax}).
Motivated by determining the complexity of testing $\chi_c(G) < 3$~\cite[Question 35]{guzmanAMC438} we look at
$\mathbb C_3$ from a constraint satisfaction theory point of view, which in turn leads us to investigate
its model theoretic properties. Moreover, we study  the generic circular triangle-free graph from a finite
graph theory perspective, and from this we will reprove and shed some light into Brandt's characterization
of $\chi_c(G) <3$ (Theorem~\ref{thm:brandt}). 

\medskip 
\textbf{Contributions.} In this article we show that 
a finite graph $G$ embeds into $\mathbb C_3$ if and only if
it is $\{K_3, K_1 + 2K_2, K_1+C_5, C_6\}$-free (see Figure~\ref{fig:minimal-obstructions}). Equivalently, 
a finite graph $G$ is $\{K_3, K_1 + 2K_2, K_1+C_5, C_6\}$-free if and only if there is a representation
of $V(G)$ by points on a circle such that two vertices are adjacent if and  only
if their corresponding points lie at an angle larger that $2\pi/3$.  By taking graph complementation, we obtain as a byproduct
of this result that a graph $G$ is a unit Helly circular-arc graph if and only if it is 
$\{3K_1, W_4, W_5, \overline{C_6}\}$-free (Theorem~\ref{thm:UCA}). As a second application of this
result, we see that a graph $G$ has circular chromatic number strictly less than three if and only if
there is a $\{K_3, K_1 + 2K_2, K_1+C_5, C_6\}$-free spanning supergraph of $G$, i.e., there is some
edge set $E'\supseteq E(G)$ such that $(V(G), E')$ is  $\{K_3, K_1 + 2K_2, K_1+C_5, C_6\}$-free. In 
Section~\ref{sec:brandt} we observe that this
is equivalent to Brandt's characterization of $\chi_c(G) < 3$ (Theorem~\ref{thm:brandt}), but highlight
that our characterization has a natural generalization to larger integers $n$. 
\begin{figure}[ht!]
\centering
\begin{tikzpicture}[scale = 0.8]

  \begin{scope}
    \node [vertex] (1) at (90:1) {};
    \node [vertex] (2) at (210:1) {};
    \node [vertex] (3) at (330:1) {};

    \foreach \from/\to in {1/2, 2/3, 1/3}     
    \draw [edge] (\from) to (\to);
    \node (L1) at (0,-1.5) {$K_3$};
    
  \end{scope}
  
  \begin{scope}[xshift=9.5cm]
    \node (L2) at (0.5,-1.5)  {$K_1 + C_5$};
    \node [vertex] (0) at (90:1) {};
    \node [vertex] (1) at (162:1) {};
    \node [vertex] (2) at (234:1) {};
    \node [vertex] (3) at (306:1) {};
    \node [vertex] (4) at (18:1) {};

    \node [vertex]  at (0:2) {};

    \foreach \from/\to in {1/2, 2/3, 3/4, 0/1, 0/4}     
    \draw [edge] (\from) to (\to);

  \end{scope}
  
  \begin{scope}[xshift=15cm]
    \node (L2) at (0,-1.5)  {$C_6$};
    \node [vertex] (0) at (60:1) {};
    \node [vertex] (1) at (120:1) {};
    \node [vertex] (2) at (180:1) {};
    \node [vertex] (3) at (240:1) {};
    \node [vertex] (4) at (300:1) {};
    \node [vertex]  (5) at (0:1) {};

    \foreach \from/\to in {1/2, 2/3, 3/4, 0/1, 4/5, 5/0}     
    \draw [edge] (\from) to (\to);
    
  \end{scope}

  \begin{scope}[xshift=5cm]
    \node (L2) at (0,-1.5) {$K_1  + 2K_2$};
    \node [vertex] (1) at (90:0.86) {};
    \node [vertex] (2) at (180:0.5) {};
    \node [vertex] (3) at (240:1) {};
    \node [vertex] (4) at (300:1) {};
    \node [vertex]  (5) at (0:0.5) {};

    \foreach \from/\to in {3/4, 2/5}     
    \draw [edge] (\from) to (\to);
   
  \end{scope}
 
\end{tikzpicture}
\caption{Four graphs that do not embed into  $\mathbb C_3$ (see Lemma~\ref{lem:Age(C3)->F-free}).}
\label{fig:minimal-obstructions}
\end{figure}
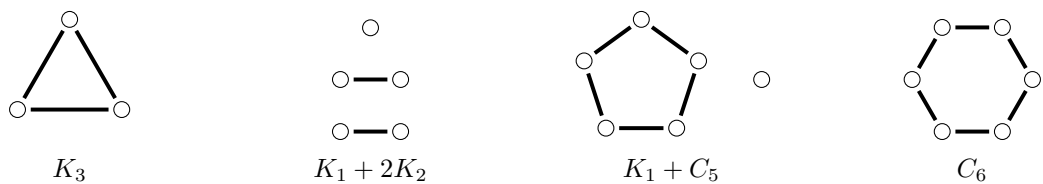

We then move on to the model theoretic properties of the generic circular triangle-free graph (Section~\ref{sec:expansion}).
In particular, we propose a (finite!) axiomatization of its first-order theory (Corollary~\ref{cor:ax}). We obtain this
axiomatization by equipping $\mathbb C_3$ with relations $B,S$, and axiomatizing the first-order theory
of the expansion $(\mathbb C_3, B,S)$ (Theorem~\ref{thm:ax}) --- actually, $S$ is the well-known separation relation
on the circle, and $B$ is closely related to the betweenness relation on the rationals. Moreover, we show that 
$(\mathbb C_3, B,S)$ is homogeneous, i.e.,  every isomorphism between finite substructures of
$(\mathbb C_3,B,S)$ extends to an automorphism of $(\mathbb C_3,B,S)$. As an applications of these results
we see that every \emph{countable} $\{K_3, K_1 + 2K_2, K_1+C_5, C_6\}$-free graph embeds into $\mathbb C_3$ (extending
our previously mentioned result for finite graphs), and that $\mathbb C_3$ is a model-complete core (Corollary~\ref{cor:mt}).

Finally, in Section~\ref{sec:complexity} we show that the problem of deciding $\chi_c(G) < 3$ for a given graph $G$
is NP-complete; equivalently, it is NP-complete to decide whether $G\to \mathbb C_3$. We first prove this by reduction
from a promise constraint satisfaction problem (closely related to approximate colourings in graph theory) 
which is known to be NP-hard. Then we propose an alternative proof based on a pp-construction of $K_3$ in
$\mathbb C_3$ (corresponding to a gadget reduction from $3$-colourability). We close this section with 
two open problems  about the computational complexity of $\Csp(G)$ for infinite graphs $G$, and a question
about graph classes with bounded chromatic number. These are motivated from wide-open conjectures in 
promise constraint satisfaction theory~\cite[Conjecture 1.2]{BrakensiekGuruswami18}
and in infinite domain CSPs~\cite[Conjecture 1.2]{BPP-projective-homomorphisms}.

Most of the elementary notions  needed for this work will be introduced in 
Section~\ref{sec:preliminaries}, except for some notions needed locally. In particular,
background about  circular-arc graphs is introduced until Section~\ref{sec:UCA-graphs},
and constraint satisfaction theory tools are introduced in Section~\ref{sec:complexity}.

\section{Preliminaries}
\label{sec:preliminaries}

In this section we fix some terminology and notation. We follow standard practise in graph theory
and model theory so that this section can be  safely skipped by many readers. 

\subsection{Graphs}

All graphs considered in this work are simple and undirected, and with (possibly infinite)
countable but non-empty vertex set.  For standard graph theoretical  notions we refer the
reader to~\cite{bondy2008}. In particular, for a positive integer $k$ we denote by
$K_k$ the complete graph on $k$ vertices, and we refer to $K_3$ as the \textit{triangle}.
A set of vertices $U$ of a graph $G$ is called an \textit{independent set} if there are no edges
$xy\in E$ with $x,y\in U$.


If $G$ is a graph, then $V(G)$ denotes the vertices of $G$, and $E(G)$ the edges of $G$.
The \textit{complement} of $G$ is the graph $\overline G$ with same vertex set as $G$
where two distinct vertices are adjacent if and only if they are not adjacent in $G$.
An \textit{isolated} vertex in a graph $G$ is a vertex $v$ not adjacent to any other vertex in $G$.
Complementarily, a \textit{universal} vertex  in $G$ is a vertex $v$ adjacent to all vertices
in $V(G)\setminus \{v\}$. Given graphs $G$ and $H$ with disjoint vertex sets we denote by $G + H$
their disjoint union, i.e., the graph with vertex set $V(G) \cup V(H)$ and edge set $E(G) \cup E(H)$.

A \textit{supergraph} of a graph $G$ is a graph $H$ such that $V(G)\subseteq V(H)$ and $E(G)\subseteq E(H)$.
In this case, we also say that $G$ is a \textit{subgraph} of $H$, and if $G\neq H$  we say that $G$ is
a \textit{proper} subgraph of $H$ (and that $H$ is a \textit{proper} supergraph of $G$). Also,
if $V(G) = V(H)$ we say that $H$ is a \textit{spanning supergraph} of $G$, and that $G$ is a
\textit{spanning subgraph} of $H$.  We say that a graph $G$ is \textit{maximal} $\mathcal{F}$-free if any 
proper spanning supergraph $H$ of $G$ is not $\mathcal F$-free 
Otherwise, we say that $H$ is an $\mathcal F$-free \textit{extension} of $G$, and
that $G$ \textit{can be extended} to an $\mathcal F$-free graph. For instance, $C_6$ can be extended to 
a triangle-free graph. Actually, a graph $G$ is maximal triangle-free if and only it is triangle-free and
if $a,b \in V(G)$ are non-adjacent, then they have a common neighbour --- this family of graphs has been
studied in~\cite{pachDM37}, and will be useful for this work. 

A \textit{homomorphism} $f\colon G\to H$ from a (possibly infinite)  graph $G$ to a
(possibly infinite) graph $H$ is an edge preserving function $f\colon V(G)\to V(H)$,  i.e., 
$f(x)f(y)\in E(H)$ whenever $xy\in E(G)$. If such homomorphism exists we write $G\to H$,
and $G\not\to H$ otherwise. A \textit{full-homomorphism} is a homomorphism
$f\colon G\to H$ such that $xy\in E(G)$ if and only if $f(x)f(y) \in E(G)$ (some authors use the term
\textit{strong homomorphism}). A full-homomorphisms that is also injective is called an \textit{embedding},
and in turn, a surjective embedding is called an \textit{isomorphism}.  If there is an isomorphism
$f\colon G\to H$ we say that $G$ and $H$ are \textit{isomorphic} and denote this fact by $G\cong H$.
Finally, an \textit{automorphism (of $G$)} is an isomorphism $f\colon G\to G$.

In graph theoretic terms, $G$ embeds into $H$ if and only if $G$ is an \textit{induced subgraph} of $H$.
Given a set of graphs $\mathcal F$ we say that $H$ is \textit{$\mathcal F$-free}
if no graph $F\in \mathcal F$ embeds into $H$; when $\mathcal F = \{F\}$ we simply
write $F$-free. In particular, a graph $G$ is triangle-free if and only if $G$ does not
contain a complete subgraph on three vertices.

A class of graphs $\mathcal C$ is \textit{hereditary} if it is preserved by vertex deletions, equivalently, 
if $G \in \mathcal C$ and $H$ embeds into $G$, then $H \in \mathcal C$. A \textit{minimal obstruction} 
of a hereditary class $\mathcal C$ is a graph $G\not \in \mathcal C$ such that every graph $G'$ obtained by deleting
one vertex from $G$ belongs to $\mathcal C$. For example, odd cycles are the minimal obstructions of bipartite
graphs. 

\subsection{Circular chromatic number}
The \textit{chromatic number} $\chi(G)$ of a graph $G$ can be defined in terms of homomorphisms by
$\chi(G) := \min\{k\colon G\to K_k\}$. We already defined the circular chromatic number in terms
of circular colourings, and now we also define it in terms of homomorphisms as follows (to see that these
are equivalent definitions we refer the reader to \cite{zhuDM229}). 
Given positive integers $p,q$ that are relatively prime, the \textit{circular complete graph} $K_{p,q}$ is the graph with
vertex set $\{0,1\dots, p-1\}$
which has an edge $ij$ if and only if $q \le |i-j| \le p-q$.
In particular, if $q = 1$,  then $K_{p,q}$ is isomorphic to the complete graph $K_p$, and if $q = 2$,
then $K_{p,q}$ is isomorphic to the complement of the odd cycle $C_p$. Also note that $K_{2n+1,n}$ is
isomorphic to the odd cycle $C_{2n+1}$. We provide an illustration of such cases in  Figure~\ref{fig:examples}.

\begin{figure}[ht!]
\centering
\begin{tikzpicture}[scale = 0.8]

  \begin{scope}
      \foreach \i in {0,1,2,3,4,5,6}
       \node [vertex, label =90-\i*360/7:{$\i$}] (\i) at (90-\i*360/7:1.3) {};

    \foreach \from/\to in {0/1, 0/2, 0/3, 0/4, 0/5, 0/6, 2/3, 2/4, 2/5, 2/6,
    1/2, 1/3, 1/4, 1/5, 1/6, 3/4, 3/5, 3/6, 4/5, 4/6, 5/6}     
    \draw [edge] (\from) to (\to);
    \node (L1) at (0,-2.3) {$K_{7,1} \cong K_7$};
  \end{scope}
  
  \begin{scope}[xshift=5cm]
      \foreach \i in {0,1,2,3,4,5,6}
        \node [vertex, label =90-\i*360/7:{$\i$}] (\i) at (90-\i*360/7:1.3) {};

    \foreach \from/\to in {0/2, 0/3, 0/4, 0/5,  2/4, 2/5, 2/6, 3/5, 3/6,
    1/3, 1/4, 1/5, 1/6, 4/6}  
     \draw [edge] (\from) to (\to);
    \node (L1) at (0,-2.3) {$K_{7,2} \cong \overline{C_7}$};
  \end{scope}

    \begin{scope}[xshift=10cm]
    \foreach \i in {0,1,2,3,4,5,6}
        \node [vertex, label =90-\i*360/7:{$\i$}] (\i) at (90-\i*360/7:1.3) {};

    \foreach \from/\to in {0/4, 4/1, 1/5, 5/2, 2/6, 6/3, 3/0}     
    \draw [edge] (\from) to (\to);
    \node (L1) at (0,-2.3) {$K_{7,3} \cong C_7$};
  \end{scope}
\end{tikzpicture}
\caption{Three circular complete graphs with vertex $p = 7$.}
\label{fig:examples}
\end{figure}
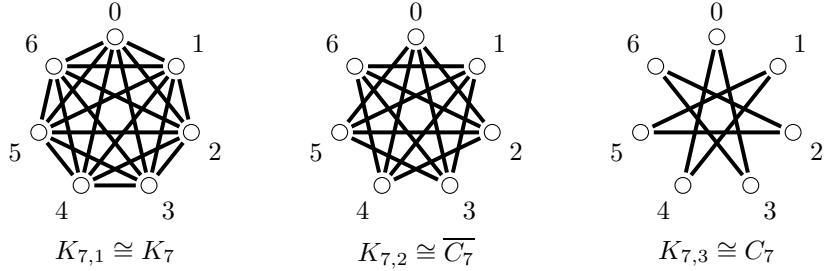

Similarly to the classical family of complete graphs,  it is the case that
$K_{p,q}\to K_{a,b}$ if and only if $p/q \le a/b$ (see e.g.,~\cite[Theorem 6.3]{HNBook}).
The circular chromatic number of a graph $G$ is then defined as
\[
\chi_c(G): = \inf\{p/q\colon p,q \text{ relative primes, } G\to K_{p,q}\}.
\]
It turns out that in the finite case, this infimum is always reached, i.e., for every
finite graph $G$ the equality $\chi_c(G) = \min\{p/q\colon p,q$ relative primes,  $G\to K_{p,q}\}$
holds (see, e.g.,~\cite[Corollary 6.8]{HNBook}).

These are all preliminary notions needed to go through Sections~\ref{sec:C3}--\ref{sec:brandt}.
The remaining preliminaries are dedicated to introduce model theoretic notions needed for
Section~\ref{sec:expansion}.

\subsection{Structures}
A (relational) \textit{signature} $\tau$ is a set of relation symbols $R,S,\dots$ each equipped with
an arity $k \in \mathbb Z^+$. A $\tau$-structure $\mathbb A$ consists of a vertex set $V(\mathbb A)$
and for each $R\in \tau$ of arity $k$ a relation $R(\mathbb A)\subseteq V(\mathbb A)^k$, which we
call the \textit{interpretation} of $R$ in $\mathbb A$. In this setting, we think of a graph $G$ as 
an $\{E\}$-structure where $E$ is an irreflexive symmetric binary relation on $V(G)$.
Homomorphisms, embeddings,  isomorphisms, and automorphisms are defined analogously
to the graph case. For a $\tau$-structure $\mathbb A$ we write $\Csp(\mathbb A)$ to denote the
class of finite $\tau$-structures $\mathbb B$ that map homomorphically to $\bA$. For instance, 
$\Csp(K_3)$ is essentially the class of $3$-colourale finite graphs --- formally, $\Csp(K_3)$ includes
all $3$-colourable finite digraphs. Similarly, we write $\Age(\bA)$ to denote the class of finite
$\tau$-structures that embed into $\bA$, and we call it the \textit{age} of $\bA$. For instance,
if $G$ is the graph with vertex set $\mathbb Z$ and edges $xy$ with $x \in \{y-1,y+1\}$, then the age of $G$
is the class of \textit{linear forests}, i.e., disjoint unions of paths. A $\tau$-structure $\mathbb A$
is \textit{finitely bounded} if there is a finite set of $\tau$-structures $\mathcal F$ such that
$\bB\in \Age(\bA)$ if and only if no structure $\mathbb F\in \mathcal F$ embeds into $\bB$.
In graph theoretic terms, this corresponds to $\Age(G)$ having finitely many minimal obstructions.

We say that a $\tau$-structure $\bB$ is a \textit{substructure} of a $\tau$-structure $\bA$ if
$V(\bB)\subseteq V(\bA)$ and for each $R\in \tau$ the interpretation $R(\bB)$ is the restriction of
$R(\bA)$ to $V(\bB)$. Notice that in the graph theoretic context, this corresponds to the notion
of \textit{induced subgraphs} (and not to subgraphs). A structure $\bA$ is \textit{homogeneous} if
for every isomorphism  $f\colon \bB\to \mathbb C$ between finite subtructures of $\bA$ there is an
isomorphism $f'\colon \bA\to\bA$ such that $f'(b) = f(b)$ for every $b\in V(\bB)$. 

Given signatures $\tau$ and $\sigma$ with $\tau \subseteq \sigma$ we say that a $\sigma$-structure
$\bB$ is an \textit{expansion} of a $\tau$-structure $\bA$ if the interpretations $R(\bA)$ and $R(\bB)$ 
coincide for every $R\in \tau$.  In this case we also say that $\bA$ is the $\tau$\textit{-reduct} of $\bB$
or simply a \textit{reduct} of $\bB$. As we will see in Section~\ref{sec:complexity}, reducts of finitely 
bounded homogeneous structures play an important role in the theory of infinite domain constraint 
satisfaction problems.

\subsection{Logic} 

The infinite structure ${\mathbb C_3}$ has the remarkable property that it is uniquely described (up to isomorphism) by the fact that it is countable and satisfies a finite set of properties that can all be expressed in first-order logic. 
Even without the fact that \emph{finitely} many properties suffice, being described by first-order properties is already a quite rare property, which has many consequences, even outside of model theory, as we will see.
This is why we need some basics from model theory.

If $\tau$ is a relational signature, then a \emph{first-order $\tau$-formula}
is defined recursively, starting from atomic formulas,   using 
the Boolean connectives $\wedge$, $\vee$, and $\neg$, universal quantification $\forall$, existential quantification $\exists$,
as usual. The atomic formulas are of the form $R(x_1,\dots,x_k)$, for $R \in \tau$ of arity $k$, and variable symbols $x_1,\dots,x_k$,
or of the form $x=y$ where $=$ is the symbol for equality and $x,y$ are variables. 
Variables that are not bounded by quantifiers in $\phi$ are called free variables of $\phi$;
we write $\phi(x_1,\dots,x_n)$ if the free variables of $\phi$ come from $x_1,\dots,x_n$. 
A $\tau$-formula without free variables is called a \emph{$\tau$-sentence}.
If $\phi$ is a $\tau$-sentence, and $\bA$ is a $\tau$-structure, then we write
$\bA \models \phi$ if $\bA$ satisfies $\phi$ (we also say that $\bA$ is a \emph{model} of $\phi$, i.e., $\phi$ is true when evaluated in $\bA$; for details, we refer to any text book in logic or model theory). 
The \emph{first-order theory of $\bA$} is the set of all first-order sentences that are satisfied by $\bA$, and denoted by $\Th(\bA)$. By an \emph{axiomatisation} of $\Th(\bA)$ we understand a set $\Phi$ of first-order $\tau$-sentences such that $\Phi$ and
$\Th(\bA)$ have the same models; i.e., every $\tau$-structure satisfies all sentences of $\Phi$ if and only if it satisfies all sentences of $\Th(\bA)$. 

A first-order formula is called \emph{existential} if it does not contain universal quantifiers and all negation symbols are in front of atomic formulas. 
Dually, a first-order formula is called \emph{universal} if it does not contain existential  quantifiers and all negation symbols are in front of atomic formulas. A first-order theory is called existential (universal) if all its sentences are existential (universal). If $\phi(x_1,\dots,x_k)$ is a first-order $\tau$-formula and $\bA$ is a $\tau$-structure, then we say that $\phi(x_1,\dots,x_k)$ \emph{defines} the relation $\{(a_1,\dots,a_k) \in V(\bA)^k \mid \bA \models \phi(a_1,\dots,a_k) \}$ 
where $\bA \models \phi(a_1,\dots,a_k)$ means that $\bA$ satisfies $\phi$ after instantiating the variables $x_1,\dots,x_k$ with the elements $a_1,\dots,a_k$; again we have to refer to any textbook in logic or model theory for the details.

\section{Circular triangle-free graphs}
\label{sec:C3}

We say that a graph $G$ is a \textit{circular triangle-free graph} if every vertex $x$
of $G$ can be represented by a point $p_x$ of a circle in such a way that $xy\in E(G)$
if and  only if $p_x$ and $p_y$ lie at an angle larger that $2\pi/3$ --- notice that $K_3$
does not admit such a representation, so every circular triangle-free graph is triangle-free.

Through this work, we use ``$G$ is circular triangle-free'' and ``$G$ embeds into $\mathbb C_3$''
equivalently (which indeed are equivalent statement from the corresponding definitions).
We begin by noticing that neither of the graphs depicted in
Figure~\ref{fig:minimal-obstructions} are circular triangle-free graphs, and the main result of
this section asserts that a finite graph $G$ is circular triangle-free if and only if 
it is $\{K_3,~K_1 + 2K_2,~K_1+C_5,~C_6\}$-free.

\begin{lemma}\label{lem:Age(C3)->F-free}
    Every circular triangle-free graph $G$ is 
    $\{K_3,~K_1+2K_2,~K_1+C_5,~C_6\}$-free.
\end{lemma}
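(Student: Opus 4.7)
The plan is to refute, for each forbidden graph $F \in \{K_3, K_1+2K_2, K_1+C_5, C_6\}$, the existence of a circular representation realizing $F$; equivalently, to show that $F$ itself does not embed into $\mathbb{C}_3$. Throughout I use the basic dictionary coming from the definition of $\mathbb{C}_3$: two points are adjacent iff the shorter arc between them is strictly greater than $1/3$; so non-adjacency corresponds to shorter arc at most $1/3$ (``close''), and adjacency to shorter arc strictly greater than $1/3$ (``far'').

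For $K_3$, three points partition the circle of circumference $1$ into three consecutive arcs whose sum is $1$; each of the three edges forces the corresponding direct arc to exceed $1/3$, which is incompatible with the arcs summing to $1$. For $K_1+2K_2$ and $K_1+C_5$, I would anchor at the isolated vertex $u$ placed at $0$: every other vertex, being non-adjacent to $u$, lies in the arc $(-1/3,1/3)$, and two such vertices can be adjacent only if they lie on opposite sides of $u$ with distances to $u$ summing to more than $1/3$. For $K_1+C_5$, the five edges of the cycle then induce a $2$-coloring by ``side of $u$'', which is impossible since $C_5$ is odd. For $K_1+2K_2$, parametrize the two matching edges as $(a,-b)$ and $(c,-d)$ with $a,b,c,d\in(0,1/3)$: the edge conditions give $a+b>1/3$ and $c+d>1/3$, whereas the cross non-edges $v_1w_2, v_2w_1$ give $a+d\le 1/3$ and $b+c\le 1/3$. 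Summing the two pairs yields $a+b+c+d$ simultaneously $>2/3$ and $\le 2/3$, a contradiction.

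The hardest case is $C_6=v_1v_2v_3v_4v_5v_6$, which has no isolated vertex. Here I would anchor at the non-edge $v_1v_4$ (a main diagonal): place $v_1=0$ and, up to rotation/reflection, take $v_4\in[0,1/3)$. The non-edges from $v_1$ together with the edges to $v_4$ pin $v_3$ and $v_5$ to the arc $(2/3,v_4+2/3)$ (close to $v_1$, far from $v_4$); dually, $v_2$ and $v_6$ are pinned to $(1/3,v_4+1/3)$. Within these intervals the edge $v_2v_3$ gives $v_3>v_2+1/3$ while the non-edge $v_3v_6$ gives $v_3\le v_6+1/3$, so $v_2<v_6$; symmetrically, the edge $v_5v_6$ and the non-edge $v_2v_5$ give $v_6<v_2$. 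These two strict inequalities contradict each other. Boundary cases where some shorter arc equals exactly $1/3$ will be the only real subtlety, and should be handled either by exploiting the density of the vertex set of $\mathbb{C}_3$ to perturb away from the boundary, or by a direct case inspection.
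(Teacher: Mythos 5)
Your overall strategy is sound and in places more self-contained than the paper's own argument (which fixes one canonical embedding of $2K_2$, $C_5$ and $P_5$ and tacitly relies on those embeddings being unique up to automorphism); your $K_3$, $K_1+C_5$ and $C_6$ computations are correct as written. The genuine gap is the boundary case $d(x,y)=1/3$, which you defer to your final sentence. Non-adjacency by itself only gives ``shorter arc $\le 1/3$'', yet every interval you use is open, and your $K_1+2K_2$ argument needs $a+d\le 1/3$, which fails precisely when $a=d=1/3$. Neither of your proposed fixes works. Perturbing the points of a given embedding does not preserve the induced graph, since adjacency flips as a distance crosses $1/3$. And an honest case inspection shows the boundary case is not a removable technicality: the five points at arc-length positions $0,\ \epsilon,\ 1/3,\ 2/3,\ 1-\epsilon$ (circumference $1$, $\epsilon$ small) induce exactly $K_1+2K_2$ under the rule ``edge iff the shorter arc is strictly greater than $1/3$'' --- the only adjacent pairs are $\{\epsilon,\,2/3\}$ and $\{1/3,\,1-\epsilon\}$, while $0$ is at distance exactly $1/3$ from both $1/3$ and $2/3$ and hence isolated. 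So if pairs of vertices at distance exactly $1/3$ could occur in $\mathbb C_3$, the lemma would simply be false for $K_1+2K_2$.

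The correct repair is a one-line observation about the vertex set rather than a perturbation or a case split: no two vertices of $\mathbb C_3$ lie at arc-distance exactly $1/3$, because distinct vertices subtend a rational angle (in radians) whereas an arc of length $1/3$ subtends the irrational angle $2\pi/3$; this is the same reason the proof of Lemma~\ref{lem:3k-1/3} must approximate the points at angles $2\pi i/(3k-1)$, which are explicitly not vertices of $\mathbb C_3$. Once this is recorded, non-adjacency is equivalent to ``shorter arc strictly less than $1/3$'', all your intervals are legitimately open, the cross-non-edge inequalities become $a+d<1/3$ and $b+c<1/3$, and all four of your case analyses close without further work.
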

\begin{proof}
It suffices to prove that neither of $K_3,~K_1+2K_2,~K_1+C_5,~C_6$ are circular triangle-free
graphs, and $K_3$ is clearly not. We provide an idea of why $K_1+2K_2$, $K_1+C_5$, and 
$C_6$ are not circular triangle-free graphs; the details are left to the reader. 
Fix embeddings of $2K_2$, $C_5$, and $P_5$ into $\mathbb C_3$; such embeddings are depicted below,
where the non-filled vertices indicate the image of the embedding, and the two dashed edges 
represent that at least one of them must be an edge of $\mathbb C_3$ (this will be argued shortly).

\begin{center}
    \begin{tikzpicture}[scale = 0.8]

    \begin{scope}
        \draw[dotted] (0,0) circle (1);
        \node [vertex, label = above:{$v_1$}] (v1) at (90:1) {};
        \node [vertex, label = below:{$v_2$}] (v2) at (-90:1) {};
        \node [vertex, label = left:{$u_1$}] (u1) at (180:1) {};
        \node [vertex, label = right:{$u_2$}] (u2) at (0:1) {};
        \node [vertex, fill = black, label = -45:{$u$}] (u) at (-45:1) {};

        \foreach \from/\to in {v1/v2, u1/u2}     
        \draw [edge] (\from) to (\to);
        \draw [edge, densely dotted] (u) to (v1);
        \draw [edge, densely dotted] (u) to (u1);
    \end{scope}

    \begin{scope}[xshift = 5cm]
        \draw[dotted] (0,0) circle (1);
        \node [vertex, label = 90:{$v_1$}] (1) at (90:1) {};
        \node [vertex, label = 162:{$v_4$}] (4) at (162:1) {};
        \node [vertex, label = 234:{$v_2$}] (2) at (234:1) {};
        \node [vertex, label = 306:{$v_5$}] (5) at (306:1) {};
        \node [vertex, label = 18:{$v_3$}] (3) at (18:1) {};
        \node [vertex, fill = black, label = below:{$u$}] (u) at (-90:1) {};

        \foreach \from/\to in {1/2, 2/3, 3/4, 4/5, 5/1, u/1}     
        \draw [edge] (\from) to (\to);  
    \end{scope}

    \begin{scope}[xshift = 10cm]
        \draw[dotted] (0,0) circle (1);
        \node [vertex, label = 90:{$v_3$}] (1) at (90:1) {};
        \node [vertex, label = 150:{$v_1$}] (4) at (150:1) {};
        \node [vertex, label = 234:{$v_4$}] (2) at (234:1) {};
        \node [vertex, label = 306:{$v_2$}] (5) at (306:1) {};
        \node [vertex, label = 30:{$v_5$}] (3) at (30:1) {};
        \node [vertex, fill = black,  label = below:{$u$}] (u) at (-90:1) {};

        \foreach \from/\to in {1/2, 2/3, 4/5, 5/1, u/4, u/3, u/1}     
        \draw [edge] (\from) to (\to);
    \end{scope}
    \end{tikzpicture}
\end{center}
It is not hard to observe that none of these partial embeddings $f\colon (G-u)\to \mathbb C_3$
for $G\in\{K_1+2K_2,~K_1+C_5,~C_6\}$ can be extended to an embedding of $G$ into $\mathbb C_3$. This observation is easiest for 
the partial embedding of $K_1 + C_5$ (middle picture), because every vertex $u\in V(\mathbb C_3)$ in the clockwise arc
from $v_5$ to $v_2$ will be adjacent to $v_1$. Symmetrically, any vertex $u$
of $\mathbb C_3$ is a neighbour of some vertex in the image of the partial embedding. In fact, the argument does not depend on the originally chosen embedding of $C_5$ into ${\mathbb C}_5$
because the embedding is unique up to applying an automorphism of ${\mathbb C}_5$.
Hence, $K_1+C_5$ cannot be embedded
into $\mathbb C_3$. 

Similarly, considering the embedding of $P_5$
into $\mathbb C_3$ (right-most picture), once can notice that any common neighbour $u$ of $v_1$ and $v_5$ must
lie in the clockwise arc from $v_2$ to $v_4$, and thus be a neighbour of $v_3$, so $C_6$ cannot be embedded
into $\mathbb C_3$.
Finally, consider the partial embedding of $2K_2$ (left-most picture) and a vertex
$u$ in the clockwise arc from $u_2$ to $v_2$. In this case, $u$ must be adjacent to either
$v_1$ or $u_1$ as otherwise, the clockwise arcs form $v_1$ to $u$, from 
$u$ to $u_1$, and from $u_1$ to $v_1$ have length strictly less than $1/3$, contradicting
the fact that the circumference is of length  $1$. After considering the symmetric cases, we conclude that
$K_1 +2K_2$ does not embed into $\mathbb C_3$.
\end{proof}

Now, we relate the graphs that embed into $\mathbb C_3$ to the graphs that embed into some $K_{3k-1,k}$
--- recall that
$K_{3k-1,k}$ has vertex set $\{0,\dots, 3k-2\}$ and there  is an edge $ij$ if and only if $|i-j| \ge k$
modulo $3k-1$. In particular, $K_{3k-1,k}\cong K_2$ for $k = 1$, and $K_{3k-1,k} \cong C_5$ for $k = 2$.

\begin{lemma}\label{lem:3k-1/3}
    The following statements are equivalent for a finite graph $G$.
    \begin{itemize}
        \item $G$ is a circular triangle-free graph.
        \item $G$ embeds into $K_{3k-1,k}$ for some positive integer $k$.
        \item $G$ admits a full-homomorphism to $K_{3k-1,k}$  for some positive integer $k$.
    \end{itemize}
\end{lemma}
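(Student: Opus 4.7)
The plan is to prove the cyclic chain $(2) \Rightarrow (3) \Rightarrow (1) \Rightarrow (2)$. The implication $(2) \Rightarrow (3)$ is immediate, since every embedding is a full-homomorphism. The two substantial implications both rest on a single observation: the circular complete graph $K_{3k-1,k}$ sits canonically inside $\mathbb C_3$ via $\iota\colon i \mapsto i/(3k-1) \in S^1$. For $i \ne j$ with cyclic distance $d$, the two arcs between $\iota(i)$ and $\iota(j)$ have lengths $d/(3k-1)$ and $1-d/(3k-1)$, so the identity $k/(3k-1) > 1/3 > (k-1)/(3k-1)$ means that adjacency in $K_{3k-1,k}$ (i.e.\ $d \ge k$) corresponds exactly to both arcs exceeding $1/3$. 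The slack of size roughly $1/(3(3k-1))$ on each side of the threshold $1/3$ is what makes the approximation arguments below work.

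For $(3) \Rightarrow (1)$, given a full-homomorphism $g\colon G \to K_{3k-1,k}$, the composition $\iota \circ g\colon V(G) \to S^1$ already respects the arc condition on every pair $\{x,y\}$ with $g(x) \ne g(y)$, but collapses each twin fibre $g^{-1}(v)$ to a single point. Because $K_{3k-1,k}$ is loopless and $g$ is full, each fibre is an independent set in $G$; perturbing every image by a distinct tiny rational $\eta_x$ then restores injectivity, and for sufficiently small perturbations the edge/non-edge structure is preserved thanks to the strict inequalities above. This yields an injective representation of $G$ at rational angles, i.e.\ an embedding $G \hookrightarrow \mathbb C_3$.

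For $(1) \Rightarrow (2)$, take an injective representation $f\colon V(G) \to S^1$. Since $G$ is finite, there is a uniform $\varepsilon > 0$ such that every edge of $G$ has both arcs $> 1/3 + \varepsilon$ under $f$, every non-edge has some arc $< 1/3 - \varepsilon$, and distinct images of $f$ lie at arc distance at least $\varepsilon$. Round $f$ to the grid $\{j/(3k-1) : 0 \le j \le 3k-2\}$ for $k$ chosen so that $1/(3k-1) < \varepsilon/4$: the rounded map stays injective and, by the slack inequality, is a full-homomorphism to $K_{3k-1,k}$, hence an embedding. The argument presents no deep obstacle; the only thing to track in each direction is that the chosen perturbations or the grid resolution lie strictly within the slack afforded by the canonical inclusion of $K_{3k-1,k}$ into $\mathbb C_3$.
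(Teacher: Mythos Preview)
Your proof is correct and follows essentially the same route as the paper: the cycle $(2)\Rightarrow(3)\Rightarrow(1)\Rightarrow(2)$, via the canonical placement of $K_{3k-1,k}$ on the circle, a small perturbation to turn a full-homomorphism into an injective circular representation, and a rounding-to-grid argument for the converse. Your write-up is slightly more explicit about the slack inequality $k/(3k-1)>1/3>(k-1)/(3k-1)$ that makes both approximation steps work, but the ideas are identical to the paper's.
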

\begin{proof}
    The second item is a particular case of the third one because every embedding is a full-homomorphism.
    Suppose that $G$ is as in the third statement; we show that then the first statement holds. Notice  that if
    there is a full-homomorphism $f\colon G \to H$ and an embedding $h\colon H\to \mathbb C_3$, then we
    can map each $v \in V(G)$ to a point $g(v)$ which is sufficiently close to $h(f(v))$ such that
    $g\colon G \to \mathbb C_3$ defines an embedding. Thus, in order to prove that the first statement
    holds, it suffices to argue that $K_{3k-1,k}$
    embeds into $\mathbb C_3$ for every positive integer $k$. Such an embedding can be constructed as follows. First, for each $i\in\{0,\dots, 3k-2\}$ consider the point $p_i$ defined by the angle
    $2\pi i,3k-1$. In particular, the circular distance between $p_i$ and $p_j$ is strictly larger than $1/3$ if and 
    only if $|i-j| \ge k$, i.e., if and only if $ij\in E(K_{3k-1,k})$. Second, approximate each $p_i$ by a point
    $p_i'$ such that $p_i'$ is defined by a rational, and the circular distance
    between $p'_i$ and $p'_j$ is larger than $1/3$ if and only if the circular distance
    between $p'_i$ and $p_j$ is. Hence, the mapping $i\mapsto p_i'$ defines an embedding 
    from $K_{3k-1,k}$ into $\mathbb C_3$.

    Finally we prove that the first item implies the second one.  Suppose that $G$ embeds into ${\mathbb C}_3$.
    With such an embedding $f\colon G\to \mathbb C_3$ one can describe each vertex $v$ of $G$ by a rational angle $p_v$. 
    The idea now is to choose $k$ to be large enough so that each $p_v$ is a approximated by a point of the
    form $2\pi i_v/(3k-1)$ in such a way that $i_v\neq i_u$ if $u\neq v$, and the circular distance between $p_v$ and $p_u$
    is strictly larger than $1/3$ if and only if the circular distance between $2\pi i_v/(3k-1)$ and $2\pi i_u/(3k-1)$ is.
    Technically, for each $\epsilon > 0$ we choose $k$ large enough so that $1/(3k-1) < \epsilon$
    and $1/(3k-1)$ is also smaller than half of the minimum circular distance between all $p_v,p_u$ for $u,v\in V(G)$.
    Then, by considering $\epsilon$ small enough we find the suggested approximation $2\pi i_v/(3k-1)$ of each $p_v$. 
    Notice that $2\pi i_v/(3k-1)$ is not a vertex of $\mathbb C_3$, but this irrelevant for the purpose of this 
    proof as such an approximation defines an embedding $f'\colon G\to K_{3k-1,k}$ by mapping
    $v$ to $i_v$.
\end{proof}


With this simple lemma, we can build on the following theorem to prove that the
converse of Lemma~\ref{lem:Age(C3)->F-free} holds for maximal triangle-free finite
graphs. The following theorem is an equivalent restatement of Theorem 2 in~\cite{pachDM37}.

\begin{theorem}[\hspace{1sp}\cite{pachDM37}]\label{thm:pach}
Let $G$ be any finite  triangle-free graph. 
If every independent set of vertices of $G$ has a common
neighbour, then there is a full-homomorphism $f\colon G\to K_{3k-1,k}$ for some positive 
integer $k$.
\end{theorem}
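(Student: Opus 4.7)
My plan is to reduce to the twin-free case by quotienting, and then show that any twin-free triangle-free graph satisfying the hypothesis is isomorphic to $K_{3\alpha-1,\alpha}$, where $\alpha$ is its independence number. Composing with the quotient map then yields the desired full-homomorphism to $K_{3\alpha-1,\alpha}$.

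The first step is to quotient by twins: set $u \equiv v$ iff $N(u)=N(v)$, and let $\bar G$ be the quotient. Triangle-freeness is preserved because twin classes are independent, and the hypothesis descends: an independent set in $\bar G$ lifts to an independent set in $G$ by choosing a representative per class, and its common neighbour in $G$ projects to one in $\bar G$. Since full-homomorphisms are allowed to identify twins, a full-homomorphism $\bar G \to K_{3k-1,k}$ composes with the quotient map to give one from $G$. So I may assume $G$ itself is twin-free.

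The core of the argument is the claim that twin-free $\bar G$ with independence number $\alpha$ is isomorphic to $K_{3\alpha-1,\alpha}$. I would proceed in three sub-steps. \textbf{(i)} Every maximum independent set $I$ is realized as $N(v)$ for a unique $v$: the hypothesis supplies a common neighbour $v$ of $I$; since $N(v)\supseteq I$ is independent, maximality of $|I|$ forces $N(v)=I$, and uniqueness follows from twin-freeness. \textbf{(ii)} Every vertex $v$ satisfies $|N(v)|=\alpha$, i.e., $\bar G$ is $\alpha$-regular. Starting from a hypothetical $v_0$ with $|N(v_0)|<\alpha$, one iteratively enlarges $N(v_0)$ by adding a non-neighbour $w$ and applying the hypothesis to $N(v_0)\cup\{w\}$ to find a new common neighbour $v'\neq v_0$ with strictly larger neighbourhood; iterating and comparing against the realizers from (i) produces a contradiction with twin-freeness. \textbf{(iii)} Combining (i) and (ii) gives a bijection $v \leftrightarrow N(v)$ between vertices and maximum independent sets; the cyclic structure of $K_{3\alpha-1,\alpha}$ is then recovered by analysing intersections $N(v)\cap N(w)$ — in $K_{3\alpha-1,\alpha}$ these take all sizes $0,1,\dots,\alpha-1$, and applying the hypothesis to each such intersection produces intermediate vertices whose neighbourhoods fill out the rotation. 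A counting argument then yields $|V(\bar G)|=3\alpha-1$.

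The main obstacle I anticipate is sub-step (ii). Pairwise common neighbours (the ``maximal triangle-free'' condition) alone are insufficient to force regularity, so one really must exploit the strengthening in the hypothesis — common neighbours of independent sets of size $\geq 3$ — combined with twin-freeness. The contradiction argument must carefully track how the ascending chain of neighbourhoods interacts with the realizers from (i), and this interplay is the most delicate part of the plan. Once regularity is in hand, sub-step (iii) should be a relatively direct combinatorial verification using the hypothesis on intersections.
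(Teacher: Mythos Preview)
The paper does not prove this statement: Theorem~\ref{thm:pach} is quoted from Pach~\cite{pachDM37} as a known result (``an equivalent restatement of Theorem~2 in~\cite{pachDM37}'') and is used as a black box in the proof of Theorem~\ref{thm:AgeC3+maximal}. There is therefore no proof in the paper to compare your proposal against.

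On the merits of your sketch: the reduction to the twin-free case and sub-step~(i) are fine, but sub-step~(ii) as written does not close. Your iteration produces a chain $N(v_0)\subsetneq N(v_1)\subsetneq\cdots$ terminating at some vertex of degree~$\alpha$, but that says nothing about the degree of $v_0$ itself; no contradiction with twin-freeness is visible from the ingredients you list. You correctly flag this as the delicate point, but the plan as stated does not yet contain the idea that makes regularity go through. Sub-step~(iii) is also only a heuristic: ``analysing intersections $N(v)\cap N(w)$'' and ``filling out the rotation'' gestures at the cyclic structure of $K_{3\alpha-1,\alpha}$ without indicating how to linearly order the vertices or why the intersection pattern forces that particular circulant. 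If you want to reconstruct Pach's argument, you will need a genuinely new idea for~(ii) (or a different organisation that avoids proving regularity first), and a concrete mechanism in~(iii) for extracting the cyclic order.
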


We build on this theorem together with Lemma~\ref{lem:3k-1/3} to characterize the maximal
triangle-free graphs that embed into $\mathbb C_3$ as follows.

\begin{theorem}\label{thm:AgeC3+maximal}
    The following statements are equivalent for a finite maximal triangle-free graph $G$.
    \begin{itemize}
        \item $G$ is a circular triangle-free graph.
        \item $G$ embeds into $K_{3k-1,k}$ for some positive integer $k$.
        \item $G$ admits a full-homomorphism to $K_{3k-1,k}$  for some positive integer $k$.
        \item $G$ is $\{K_3,~K_1 + 2K_2,~K_1 + C_5,~C_6\}$-free.
    \end{itemize}
\end{theorem}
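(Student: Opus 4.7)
The plan is to leverage what has already been proved: the equivalence of items (1), (2), (3) is immediate from Lemma~\ref{lem:3k-1/3}, and $(1) \Rightarrow (4)$ is Lemma~\ref{lem:Age(C3)->F-free} applied to $G$ itself. So the only new content is the reverse implication $(4) \Rightarrow (3)$. For this, I would reduce to Pach's theorem (Theorem~\ref{thm:pach}) by establishing the auxiliary claim that every independent set of $G$ has a common neighbour; once that is in hand, Theorem~\ref{thm:pach} directly supplies a full-homomorphism $G \to K_{3k-1,k}$ for some $k$.

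I would prove the auxiliary claim by induction on the size $k$ of the independent set $I \subseteq V(G)$. The cases $k \le 2$ follow immediately from the maximal triangle-free hypothesis. For $k \ge 3$, suppose for contradiction that $I = \{v_1,\dots,v_k\}$ has no common neighbour. By the inductive hypothesis, each $(k-1)$-subset $I \setminus \{v_i\}$ has a common neighbour, and I can moreover choose such a common neighbour $u_i$ with $u_i \not\sim v_i$: otherwise every vertex of $\bigcap_{j \ne i} N(v_j)$ would also be a neighbour of $v_i$, yielding a common neighbour of $I$. Triangle-freeness then forces $u_i \not\sim u_j$ for any $i \ne j$, since otherwise $u_i, u_j$ together with any $v_\ell$ with $\ell \notin \{i,j\}$ (which exists because $k \ge 3$) would form a triangle; an analogous argument shows that $v_1,v_2,v_3,u_1,u_2,u_3$ are six pairwise distinct vertices.

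The induced subgraph on $\{v_1,v_2,v_3,u_1,u_2,u_3\}$ therefore has exactly the edges $v_i u_j$ with $i \ne j$, i.e., it is $K_{3,3}$ minus a perfect matching, which is isomorphic to $C_6$. This contradicts the $C_6$-freeness of $G$ and closes the induction. The main obstacle I anticipate is the bookkeeping in the inductive step: verifying that each $u_i$ can indeed be selected with $u_i \not\sim v_i$, that all six vertices are distinct, and that the spanned induced subgraph really is $C_6$ (not just a subgraph containing a $C_6$). It is worth noting that only the $K_3$-free and $C_6$-free hypotheses are actively used for $(4) \Rightarrow (3)$; the forbidden graphs $K_1+2K_2$ and $K_1+C_5$ appear in item (4) essentially because they are forced by $(1) \Rightarrow (4)$ through Lemma~\ref{lem:Age(C3)->F-free}.
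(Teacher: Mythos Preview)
Your proposal is correct and takes essentially the same approach as the paper. Both arguments reduce $(4)\Rightarrow(3)$ to Pach's theorem by showing that every independent set has a common neighbour, and both establish this via a minimal counterexample (equivalently, your induction): pick three vertices $v_1,v_2,v_3$ of the offending independent set together with their ``all-but-one'' common neighbours $u_1,u_2,u_3$, and observe that these six vertices induce a $C_6$.
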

\begin{proof}
    The equivalence between the first three items was proved in Lemma~\ref{lem:3k-1/3}. Here we show that the
    last statement is equivalent to the first three.  
    If $G$ satisfies the first statement, then it is $\{K_3, C_5+ K_1, C_6, 2K_2+K_1\}$-free by Lemma~\ref{lem:Age(C3)->F-free}. 
    We show that the last statement implies the third one. The fact that $G$ is a maximal-triangle free graph
    implies that any two non-adjacent vertices have a common neighbour in $G$. Moreover, we claim that 
    for each independent set $I \subseteq V(G)$ there
    is a vertex $v$ such that $I \subseteq N(v)$. To prove this, suppose 
    otherwise for contradiction, and let $I$ be a minimal counterexample.
    In particular, $|I| \ge 3$ and for each $x\in I$ there is a vertex $v_x$ which is a neighbour of every
    $y\in I\setminus \{x\}$ and $xv_x\not \in E(G)$. Let $x,y,z\in I$ be three different vertices of $I$,
    and notice that $x v_z y v_x z v_y$ induce a $6$-cycle in $G$: this assertion follows from the
    choice of $v_x$, $v_y$, $v_z$, the fact that $I$ is independent, and because $G$ has no triangles. 
    The existence of this $6$-cycle contradicts the assumption that $G$ is $C_6$-free. 
    Therefore, $G$ satisfies that every independent subset of vertices has a common neighbour.
    Thus, it follows from Theorem~\ref{thm:pach} that $G$
    admits a full-homomorphism to $K_{3k-1,k}$ for some positive integer $k$.
\end{proof}

\subsection{Vertex extension lemmas}

This subsection contains a series of technical lemmas that build up to showing
that every $\{K_3,~K_1+2K_2,~K_1+C_5,~C_6\}$-free graph $G$ can be embedded into a 
possibly larger but still finite 
$\{K_3,~K_1+2K_2,~K_1+C_5,~C_6\}$-free graph $H$ with the
extra property that $H$ is maximal triangle-free. 

\begin{lemma}\label{lem:ext-C6}
    Let $G$ be a $\{K_3,~K_1+2K_2,~C_6\}$-free graph and let $I\subseteq V(G)$ be a maximal
    independent set of vertices. If $G'$ is obtained from $G$ by adding a new vertex $v'$ 
    such that $N(v') = I$, 
    then $G'$ is $C_6$-free.
\end{lemma}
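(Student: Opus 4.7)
The plan is to argue by contradiction: suppose $G'$ contains an induced $C_6$. Since $G$ is already $C_6$-free, this $C_6$ must include the new vertex $v'$, so we can label its vertices in cyclic order as $v', a, b, c, d, e$. Then $a$ and $e$ are the neighbours of $v'$ in the $C_6$, so $a, e \in I$, while $b, c, d$ are non-neighbours of $v'$, so $b, c, d \notin I$. Moreover, since the $C_6$ is induced, in $G$ the vertices $\{a, b, c, d, e\}$ span a $P_5$ with precisely the edges $ab, bc, cd, de$, all other pairs being non-edges.

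Next I would exploit the maximality of $I$ applied to the middle vertex $c$: since $c \notin I$, there exists some $f \in I$ with $cf \in E(G)$. The vertex $f$ is distinct from $a, b, c, d, e$: it is not $a$ or $e$ because $c$ is non-adjacent to both, and it is not in $\{b, c, d\}$ because those are outside $I$. Since $f, a, e \in I$ and $I$ is independent, $fa, fe \notin E(G)$.

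The key step is then a short case analysis on whether $f$ is adjacent to $b$ or to $d$. If $fb \in E(G)$, then $\{f, b, c\}$ is a $K_3$ in $G$, contradicting the $K_3$-freeness. Symmetrically, if $fd \in E(G)$, then $\{f, c, d\}$ is a $K_3$, again a contradiction. So both $fb$ and $fd$ must be non-edges; but then in $G$ the five vertices $\{f, a, b, d, e\}$ induce exactly $K_1 + 2K_2$: the two matching edges are $ab$ and $de$ (all four cross-edges $ad, ae, bd, be$ are non-edges from the induced $C_6$), and $f$ is isolated from these four since $fa, fb, fd, fe$ are all non-edges. This contradicts $G$ being $K_1 + 2K_2$-free and finishes the proof.

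I do not foresee a real obstacle here — the proof is essentially a pigeonhole on the edges between $f$ and $\{b, d\}$. The only subtle points to get right are verifying that all the required non-edges of the would-be induced $C_6$ are in place (to make $\{a, b, d, e\}$ actually induce $2K_2$ in $G$), and checking that $f$ is genuinely a new vertex distinct from the ones already named; both are straightforward from $f \in I$ combined with the induced nature of the cycle.
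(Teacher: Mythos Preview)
Your proof is correct and follows essentially the same approach as the paper: both argue that an induced $C_6$ through $v'$ yields an induced $P_5$ with endpoints in $I$ and middle three vertices outside $I$, then use maximality of $I$ to find a neighbour of the middle vertex in $I$, triangle-freeness to rule out adjacencies to its path-neighbours, and independence of $I$ to rule out adjacencies to the endpoints, producing a $K_1+2K_2$. The only cosmetic difference is that you phrase the triangle-freeness step as a short case split on $fb$ and $fd$, whereas the paper states it directly.
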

\begin{proof}
    Since $G$ is $C_6$-free, $C_6$ embeds into $G'$ if and only if $v'$ belongs to
    a $6$-cycle. The latter holds true if and only if there is an induced
    path on $5$ vertices $v_1,\dots, v_5$ such that $v_1,v_5\in I$ and
    $v_2,v_3,v_4\in V(G)\setminus I$. 
    Since $I$ is a maximal independent set in $G$, the vertex 
    $v_3$ must have a neighbour $u\in I$. Since $G$ is $K_3$-free, $u$ is neither adjacent to $v_2$ nor to $v_4$. Moreover, $u$ is neither adjacent to $v_1$ nor to $v_5$, 
    because $v_1,v_5,u\in I$. It follows that the vertices $u$, $v_1$, $v_2$, $v_4$,
    and $v_5$ induce a copy of $K_1+ 2K_2$  in $G$,  contradicting the choice of $G$.
    Therefore, $G'$ is $C_6$-free.
\end{proof}

The following lemma is a building block to proving that the same construction as in Lemma~\ref{lem:ext-C6} yields a $(K_1+2K_2)$-free graph.

\begin{lemma}\label{lem:I-K12K1}
    Let $G$ be a $\{K_3,~K_1+ 2K_2,~C_6\}$-free graph. If  $I\subseteq V(G)$ is a 
    maximal independent set of vertices, then every 
    induced $2K_2$ subgraph of $G$ contains at least one vertex in $I$.
\end{lemma}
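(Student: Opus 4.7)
The plan is to argue by contradiction. Suppose that $\{a,b,c,d\}$ induces a $2K_2$ with edges $ab$ and $cd$ and $\{a,b,c,d\} \cap I = \emptyset$. The aim is to exhibit either an induced $K_1+2K_2$ or an induced $C_6$ in $G$, contradicting the hypothesis on $G$. A useful first reduction: if some $w \in I$ is non-adjacent to each of $a,b,c,d$, then $\{w,a,b,c,d\}$ already induces $K_1+2K_2$. So we may assume that every vertex of $I$ is adjacent to at least one of $a,b,c,d$.

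Since $I$ is maximal independent and $a,b,c,d \notin I$, we pick $u, u_b, u_d \in I$ adjacent to $a, b, d$ respectively. Triangle-freeness applied to the edges $ab$ and $cd$ implies that each of these vertices is adjacent to at most one element of $\{a,b\}$ and at most one element of $\{c,d\}$; in particular $u \not\sim b$, $u_b \not\sim a$, and $u_d \not\sim c$. The argument then splits into a short case analysis on the adjacency patterns of $u$ and $u_d$ inside $\{a,b,c,d\}$: one distinguishes the case where $u$ is adjacent only to $a$ from the case where $u$ is adjacent to $a$ and one of $c,d$ (the sub-split $u \sim a,c$ versus $u \sim a,d$ being symmetric by swapping $c$ and $d$).

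Three representative subcases illustrate the pattern. First, if $u \sim a$ only and $u_b \sim b$ only, then $\{u, u_b, b, c, d\}$ induces $K_1 + 2K_2$ with $u$ isolated; all missing edges come from the independence of $I$, the assumption that $u$ has no further neighbour in $\{a,b,c,d\}$, and the $2K_2$-structure on $\{a,b,c,d\}$. Second, if $u \sim a$ only, $u_b \sim b,c$, and $u_d \sim a,d$, then the six vertices $\{u_d, a, b, u_b, c, d\}$ induce the hexagon $u_d - a - b - u_b - c - d - u_d$, with every potential chord ruled out by triangle-freeness (applied to $ab$, $cd$, or $u_d d$) or by independence of $I$. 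Third, if $u \sim a, c$ and $u_d \sim b, d$, then $\{a, b, c, d, u, u_d\}$ induces the hexagon $a - u - c - d - u_d - b - a$. The remaining combinations follow the same dichotomy: whenever an $I$-neighbour of one of $b, d$ stays inside its own pair $\{a,b\}$ or $\{c,d\}$, one finds two disjoint edges among $ab, cd, u_b b, u_d d$ together with an $I$-vertex that is isolated from all four of their endpoints (giving $K_1 + 2K_2$); whenever the $I$-neighbours spread across both pairs, one closes up a hexagon of the types displayed above.

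The main obstacle is purely organizational: one has to enumerate the handful of admissible adjacency patterns of $u, u_b, u_d$ among $\{a,b,c,d\}$ and, for each, pinpoint the correct five- or six-vertex set whose induced subgraph is $K_1+2K_2$ or $C_6$. There is no hidden conceptual difficulty, because in every subcase the required non-edges are immediate consequences of the independence of $I$, triangle-freeness of $G$ applied to the edges $ab$ and $cd$, and the $2K_2$-structure on $\{a,b,c,d\}$.
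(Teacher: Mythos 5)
Your overall strategy --- use the maximality of $I$ to produce $I$-neighbours of the exposed $2K_2$ and then case-split on their adjacency patterns until an induced $K_1+2K_2$ or $C_6$ appears --- is sound and is essentially the strategy of the paper; the three subcases you verify are correct. However, your enumeration is incomplete and the closing ``dichotomy'' is false as stated. Concretely, consider the admissible pattern in which $u$ is adjacent to $a$ and $d$ (and to neither $b$ nor $c$), $u_b$ is adjacent to $b$ only, and $u_d$ is adjacent to $d$ only. The subgraph induced on $\{a,b,c,d,u,u_b,u_d\}$ has edge set $\{ab,\,cd,\,ua,\,ud,\,u_bb,\,u_dd\}$: it is a tree, hence contains no $6$-cycle, and every induced $2K_2$ inside it (for instance $\{u_b,b,u_d,d\}$, $\{a,b,u_d,d\}$, or $\{u_b,b,c,d\}$) has each of the remaining named vertices adjacent to one of its endpoints --- in particular $u$ is adjacent to $d$, so it can never serve as the isolated vertex. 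Thus for this pattern there is no ``correct five- or six-vertex set'' among $a,b,c,d,u,u_b,u_d$, and none of the four pairs of disjoint edges among $ab$, $cd$, $u_bb$, $u_dd$ comes with a guaranteed isolated $I$-vertex. To close this case one must introduce a fourth auxiliary vertex, namely an $I$-neighbour $u_c$ of $c$ (which exists by maximality since $c\notin I$), and branch on whether $u_c$ is adjacent to $b$: if it is, the six vertices $b,a,u,d,c,u_c$ induce a $6$-cycle in that cyclic order; if not, $\{u_c,u_b,b,u,d\}$ induces $K_1+2K_2$ with isolated vertex $u_c$ and edges $u_bb$ and $ud$ (note that neither edge is on your list). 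Your proof never introduces this vertex, so the argument genuinely stalls at this pattern.

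The paper avoids this combinatorial branching by reorganising the argument: it first shows, using $(K_1+2K_2)$-freeness, that some \emph{single} vertex $x\in I$ must be adjacent to exactly one endpoint of \emph{each} of the two edges (if every $I$-neighbour of the four vertices stayed inside its own edge, a $K_1+2K_2$ would already appear), then shows the same for the two remaining endpoints, producing $y\in I$; the vertices $x$ and $y$ then immediately complete an induced $C_6$ with $a,b$ and the other edge. You should either restructure along those lines or carry out the full enumeration, including the fourth auxiliary vertex and its subcases.
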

\begin{proof}
    Anticipating a contradiction, suppose that there are four vertices $a,b,u,v\in 
    V(G)\setminus I$
    that induce exactly two edges $ab$ and $uv$. We first argue that there is a vertex
    $x\in I$ adjacent to exactly one vertex of each edge $ab$ and $uv$. Indeed, 
    since $I$ is a maximal independent set, there are vertices $a',b',u',$ and $v'$
    adjacent to $a,b,u,$ and $v$, respectively. If all of these are different, 
    and each $i\in \{a,b,u,v\}$ is only adjacent to $i'$, then it is straightforward to
    find a copy of $K_1 + 2K_2$ in $G$. Thus, some vertex $x\in \{a',b',u',v'\}$ must be
    adjacent to two vertices in $\{a,b,u,v\}$, and since $G$ is triangle-free,
    $x$ is adjacent to exactly one end-vertex of each edge $ab$ and $uv$. 
    Without loss of generality, we assume that $x$ is adjacent to $a$ and $u$. 
    Now we argue that there is also a vertex $y$ adjacent to $b$ and $v$. Otherwise, we have $b' \neq v'$  and 
    then $\{x, b', b, v', v\}$ induces a $K_1+2K_2$ in $G$. Finally, by the assumption
    that $ab$ and $uv$ induce a copy of $2K_2$ in $G$, and by the choice of $x$
    and $y$, we conclude that $a,b,y,v,u,x$ create an induced $6$-cycle in $G$, 
    reaching our final contradiction. Therefore, every induced $2K_2$ contains
    at least one vertex in $I$.
\end{proof}

\begin{lemma}\label{lem:ext-K12K2}
    Let $G$ be a $\{K_3,~K_1+ 2K_2,~K_1 + C_5,~C_6\}$-free graph, and $I\subseteq V(G)$ a 
    maximal independent set of vertices. If $G'$ is obtained from $G$ by adding a new vertex 
    $v'$ adjacent to exactly the vertices in $I$, then $G'$ is $(K_1 + 2K_2)$-free.
\end{lemma}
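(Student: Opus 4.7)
The plan is to argue by contradiction: suppose $G'$ contains an induced $K_1 + 2K_2$. Since $G$ itself is $(K_1+2K_2)$-free, the new vertex $v'$ must participate in such a copy, and the argument splits according to the role of $v'$ in it.

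If $v'$ is the isolated vertex of the $K_1+2K_2$, then the two matching edges live entirely inside $G$, on four vertices that are all non-neighbours of $v'$ and hence all lie in $V(G)\setminus I$. This is precisely a $2K_2$ in $G$ avoiding $I$, which directly contradicts Lemma~\ref{lem:I-K12K1}. So I may assume $v'$ lies on one of the two matching edges; label the five vertices as $v', x_2, x_0, x_3, x_4$, where $v'x_2$ and $x_3 x_4$ are the edges and $x_0$ is isolated. The definition of the edges at $v'$ forces $x_2 \in I$ and $x_0, x_3, x_4 \in V(G)\setminus I$, and the non-edges in the $K_1+2K_2$ tell us that in $G$, the vertex $x_2$ is non-adjacent to each of $x_0, x_3, x_4$, and $x_0$ is non-adjacent to $x_3, x_4$.

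Now I would extract additional structure from the maximality of $I$. Since $x_0\notin I$, there is $y_0\in I$ adjacent to $x_0$, and by triangle-freeness $y_0$ is adjacent to at most one of $x_3, x_4$. If $y_0$ is adjacent to neither, then $\{x_2, x_0, y_0, x_3, x_4\}$ induces $K_1+2K_2$ in $G$ (with isolated vertex $x_2$), a contradiction. Otherwise, by the symmetry $x_3\leftrightarrow x_4$, I may assume $y_0 x_3 \in E(G)$, whence $y_0 x_4\notin E(G)$. Using maximality again, pick $y_4\in I$ adjacent to $x_4$; triangle-freeness and the earlier non-edges force $y_4\notin\{y_0, x_2\}$ and $y_4 x_3\notin E(G)$.

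Finally I split on $y_4 x_0$. If $y_4 x_0\notin E(G)$, then $\{x_2, x_0, y_0, x_4, y_4\}$ has edges exactly $x_0y_0$ and $x_4y_4$, since $x_2, y_0, y_4\in I$ are pairwise non-adjacent, $x_0x_4\notin E(G)$, $y_0 x_4\notin E(G)$, and $y_4 x_0\notin E(G)$; this is an induced $K_1+2K_2$ in $G$, contradiction. If instead $y_4 x_0\in E(G)$, then $x_0\,y_0\,x_3\,x_4\,y_4\,x_0$ is a cycle whose chords $x_0 x_3, x_0 x_4, y_0 x_4, y_0 y_4, x_3 y_4$ are all non-edges, so this is an induced $C_5$; adjoining $x_2$, which is non-adjacent to every one of these five vertices (by $x_2\in I$ and the already verified non-edges), yields an induced $K_1+C_5$ in $G$, again a contradiction. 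In either case we reach a forbidden subgraph of $G$, so $G'$ is indeed $(K_1+2K_2)$-free.

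The main obstacle is the subcase where $y_0$ already has a neighbour in $\{x_3, x_4\}$: there the straightforward $K_1+2K_2$ is not available, and I expect to have to spend the bookkeeping to produce either a second private neighbour $y_4$ giving a new $K_1+2K_2$, or a $5$-cycle closing through $y_4$ that combines with $x_2$ into a $K_1+C_5$. Verifying non-edges of this $C_5$ is the only step that uses triangle-freeness in an essential way, and it is exactly here that the hypothesis that $G$ forbids $K_1+C_5$ earns its keep.
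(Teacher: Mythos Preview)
Your proof is correct and follows essentially the same approach as the paper's: the same case split on the role of $v'$, the same appeal to Lemma~\ref{lem:I-K12K1} when $v'$ is isolated, and in the other case the same strategy of pulling in a neighbour of the isolated vertex from $I$, then a neighbour of the unmatched endpoint of the other edge from $I$, and splitting on whether these form a $C_5$ or a new $K_1+2K_2$. The only difference is labeling (your $x_2,x_0,x_3,x_4,y_0,y_4$ correspond to the paper's $u,x,b,a,y,a'$).
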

\begin{proof}
    It suffices to show that $v'$ does not belong to any copy of $K_1+2K_2$ in $G'$.
    As before, we proceed by contradiction and assume that $v'$ does belong to 
    such a copy in $G'$. There are two possible cases: $v'$ is the isolated vertex,
    or $v'$ is the end-vertex of one the edges of $K_1 + 2K_2$. In the first case,
    there would be a copy of $2K_2$ that does not intersect $I$, and this contradicts
    Lemma~\ref{lem:I-K12K1}. For the second case, let $v', u, a, b, x$
    be the vertices in $G'$ that induce a copy of $K_1+2K_2$, where $uv',ab\in E(G')$
    and $x$ is an isolated vertex (in this copy of $K_1+2K_2$). Clearly, $u$ belongs to
    $I$ while $a$, $b$, and $x$
    belong to $V(G)\setminus I$. Since $I$ is a maximal independent set, $x$ has
    a neighbour $y \in I$. Notice that if $y$ is not adjacent to neither $a$ nor $b$, 
    then $u$, $ab$, $xy$ induce a copy of $K_1+2K_1$ in $G$, so we assume without loss of
    generality that
    $by \in E(G)$. Since $G$ is $K_3$-free, $y$ is not adjacent to $a$. By the maximality
    of $I$ there is a neighbour $a'$ of $a$ that belongs to $I$. This yields 
    the following picture, where $u$ is only adjacent to $v'$.
\begin{center}
    \begin{tikzpicture}[scale = 0.8]

    \begin{scope}[xshift=5cm]
        \node [vertex, label = above:{$v'$}] (v) at (0,3) {};
        
        \node [vertex, label = left:{$u$}] (u) at (-1.2,1.5) {};
        \node [vertex, label = right:{$a'$}] (a1) at (0,1.5) {};
        \node [vertex, label = right:{$y$}]  (y) at (1.2,1.5) {};
        \draw (-2,0.8) rectangle (2,2.2);
        \node at (-2.4,1) {$I$};

        \node [vertex, label = below:{$a$}] (a) at (-0.6,0) {};
        \node [vertex, label = below:{$b$}] (b) at (0.8,0) {};
        \node [vertex, label = below:{$x$}]  (x) at (2,0) {};

        \foreach \from/\to in {v/u, v/a1, v/y, y/x, y/b, b/a, a/a1} 
        \draw [edge] (\from) to (\to);

        \foreach \from/\to in {x/b, a/y, a/a1, b/a1} 
        \draw [edge, dashed] (\from) to (\to);
        \draw [edge, dashed] (a) to [bend left = 30] (x);
    \end{scope}
    \end{tikzpicture}
\end{center}
The only possible adjacency left to determine in the configuration above is 
between $a'$ and $x$. If there is an edge $a'x$ in $G$, then $u$ together with
$a, a', x, y, b$ induce a copy of $K_1 + C_5$ in $G$.  Otherwise, $\{u,a,a',x,y\}$ induces a copy of $K_1 + 2K_2$ in $G$. In either case, we obtain a contradiction to the fact that $G$ is $\{K_1+2K_2,~K_1+C_5\}$-free. 
\end{proof}

Using a similar idea as in the previous proof, we first prove the following lemma
as a first step to proving that the graph $G'$ considered in Lemma~\ref{lem:ext-C6}
and Lemma~\ref{lem:ext-K12K2} is also $(K_1+C_5)$-free.

\begin{lemma}\label{lem:I-K1C5}
    Let $G$ be a $\{K_3,~K_1+ 2K_2,~K_1 + C_5\}$-free  graph. If  $I\subseteq V(G)$ is a 
    maximal independent set of vertices, then every $5$-cycle of $G$ intersects $I$. 
\end{lemma}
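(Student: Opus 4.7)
The plan is a proof by contradiction: assume there exists a $5$-cycle $C = v_1 v_2 v_3 v_4 v_5$ in $G$ (indices mod $5$) with $V(C)\cap I = \emptyset$, and produce a forbidden induced subgraph. I would first collect a few basic structural facts. By maximality of $I$, every $v_i$ has some neighbour in $I$. Since $G$ is $(K_1+C_5)$-free, every $w\in I$ must have at least one neighbour in $V(C)$; otherwise $\{w\}\cup V(C)$ would induce $K_1+C_5$. Triangle-freeness forces $N(w)\cap V(C)$ to be an independent set of $C$, which has size at most two; in the size-two case it must consist of two vertices at distance $2$ in $C$. Hence for $w\in N(v_i)\cap I$ the set $N(w)\cap V(C)$ is one of $\{v_i\}$, $\{v_i,v_{i-2}\}$, or $\{v_i,v_{i+2}\}$.

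The core of the argument is to build an induced $K_1+2K_2$. Concretely, I would search for an index $i$ together with vertices $u_i\in N(v_i)\cap I$ and $u_{i+2}\in N(v_{i+2})\cap I$ satisfying $u_iv_{i+2}\notin E(G)$ and $u_{i+2}v_i\notin E(G)$. Given such an $i$, picking any $u_{i+1}\in N(v_{i+1})\cap I$, the structural observation above shows that $u_{i+1}$ is adjacent to neither $v_i$ nor $v_{i+2}$; and since $u_i,u_{i+1},u_{i+2}\in I$ are pairwise non-adjacent, the set $\{u_{i+1},u_i,v_i,u_{i+2},v_{i+2}\}$ induces $K_1+2K_2$ (with $u_{i+1}$ isolated and edges $u_iv_i$ and $u_{i+2}v_{i+2}$), contradicting $(K_1+2K_2)$-freeness.

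The main combinatorial step is therefore to show such an index $i$ always exists. For each $i$ I define $B_i:=\{w\in N(v_i)\cap I : wv_{i+2}\notin E(G)\}$ and $A_i:=\{w\in N(v_i)\cap I : wv_{i-2}\notin E(G)\}$; the structural observation gives $A_i\cup B_i = N(v_i)\cap I$, which is nonempty, so $A_i$ and $B_i$ cannot both be empty for the same $i$. If no good index exists, then for every $i$ either $B_i=\emptyset$ or $A_{i+2}=\emptyset$. Writing $X:=\{i : B_i=\emptyset\}$ and $Y:=\{i : A_i=\emptyset\}$, the disjointness $X\cap Y=\emptyset$ (otherwise some $v_i$ would have no $I$-neighbour) together with the condition ``for every $i$, $i\in X$ or $i+2\in Y$'' forces the implication $i\notin X\Rightarrow i+2\notin X$. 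Hence $X$ is invariant under shift by $2$ on $\mathbb Z/5\mathbb Z$; since that shift generates $\mathbb Z/5\mathbb Z$, we must have $X=\emptyset$ or $X=\mathbb Z/5\mathbb Z$. In either extreme, every $I$-neighbour of every $v_i$ has the same ``orientation'' --- say, adjacent to exactly $\{v_i,v_{i+2}\}$ in $V(C)$.

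In that extreme case, picking $w_i\in N(v_i)\cap I$ for each $i$ yields vertices with $N(w_i)\cap V(C)=\{v_i,v_{i+2}\}$; a direct check shows that $\{v_1,w_1,v_3,w_3,v_5\}$ induces a $5$-cycle (with edges $v_1w_1$, $w_1v_3$, $v_3w_3$, $w_3v_5$, and $v_5v_1$), while $w_2$ is non-adjacent to all five of these vertices, yielding an induced $K_1+C_5$ and a contradiction. The other extreme is symmetric under reversing the cyclic orientation of $C$. The main obstacle I anticipate is the cyclic counting step with its bookkeeping modulo $5$; the remaining verifications reduce to routine triangle-freeness and maximal-independent-set checks.
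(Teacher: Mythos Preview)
Your proof is correct and genuinely different from the paper's argument. The paper proceeds by an ad hoc case analysis: it fixes neighbours $u_1,u_5\in I$ of $v_1,v_5$, observes that one of them must be adjacent to $v_3$ (else $\{v_3,u_1,v_1,u_5,v_5\}$ induces $K_1+2K_2$), then brings in $u_4\in N(v_4)\cap I$ and later $u_2\in N(v_2)\cap I$, working through a small tree of cases on the adjacencies $u_5v_3$, $u_4v_1$, $u_2v_5$ until a forbidden $K_1+2K_2$ or $K_1+C_5$ appears. Your approach instead exploits the cyclic symmetry: you classify each $I$-neighbour of $v_i$ by whether it also sees $v_{i+2}$ or $v_{i-2}$, and a clean shift-by-$2$ argument on $\mathbb Z/5\mathbb Z$ shows that either some index $i$ admits an ``oppositely oriented'' pair (giving $K_1+2K_2$ immediately via $u_{i+1}$), or all $I$-neighbours are uniformly oriented, in which case you build a new induced $C_5$ on $\{v_1,w_1,v_3,w_3,v_5\}$ with isolated witness $w_2$. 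Your route is more structural and the case split is cleaner; the paper's route is shorter in raw length and avoids the mod-$5$ bookkeeping, but at the cost of a less transparent sequence of ad hoc choices. Both ultimately need both forbidden subgraphs $K_1+2K_2$ and $K_1+C_5$ to close the argument.
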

\begin{proof}
    Anticipating a contradiction suppose that there are five vertices $v_1,\dots, v_5\in V(G)\setminus I$
    that induce a $5$-cycle. By the maximality of $I$, the vertices $v_1$ and $v_5$ must be adjacent to some vertices
    $u_1$ and $u_5$ in $I$, respectively. Since $G$ is $K_3$-free, $u_1$ is neither adjacent to $v_2$ nor
    to $v_5$, and $u_5$ is neither adjacent to $v_1$ nor to $v_4$. If neither $u_1$ nor $u_5$ is adjacent
    to $v_3$, then $\{v_3,u_1,v_1,u_5,v_5\}$ induces a copy of $K_1 + 2K_2$ in $G$. Thus, without loss
    of generality, we may assume that $u_1$ is adjacent to $v_3$. By the choice of $I$, the vertex $v_4$ must have a
    neighbour $u_4$ in $I$, and $u_4$ cannot be adjacent to neither $v_3$ nor $v_5$, because $G$ is $K_3$-free.
    We now distinguish between two complementary cases: one where $u_5v_3 \notin E(G)$, and the
    other one where $u_5v_3\in E(G)$. Below we draw the first configuration to the left, and the second one
    to the right (with the additional vertex $u_2$ introduced shortly). 
    \begin{center}
    \begin{tikzpicture}[scale = 0.8]

    \begin{scope}
        \node [vertex, label = above:{$u_1$}] (u1) at (-2.5,1.5) {};
        \node [vertex, label = above:{$u_4$}] (u4) at (1.25,1.5) {};
        \node [vertex, label = above:{$u_5$}] (u5) at (2.5,1.5) {};
        \draw (-3.7,0.8) rectangle (3.7,2.35);
        \node at (-4.1,1) {$I$};

        \node [vertex, label = below:{$v_1$}] (v1) at (-2.5,0) {};
        \node [vertex, label = below:{$v_2$}] (v2) at (-1.25,0) {};
        \node [vertex, label = below:{$v_3$}] (v3) at (0,0) {};
        \node [vertex, label = below:{$v_4$}] (v4) at (1.25,0) {};
        \node [vertex, label = below:{$v_5$}] (v5) at (2.5,0) {};

        \foreach \from/\to in {v1/v2, v2/v3, v3/v4, v4/v5, u1/v1, u1/v3, u4/v4, v5/u5} 
        \draw [edge] (\from) to (\to);
        \draw [edge] (v5) to [bend left = 45] (v1);

        \foreach \from/\to in {u5/v3, u4/v3, u4/v5} 
        \draw [edge, dashed] (\from) to (\to);
        \draw [edge, dashed] (u1) to [bend left = 10] (v5);
    \end{scope}

    \begin{scope}[xshift = 10cm]
       \node [vertex, label = above:{$u_1$}] (u1) at (-2.5,1.5) {};
        \node [vertex, label = above:{$u_2$}] (u2) at (-1.25,1.5) {};
        \node [vertex, label = above:{$u_4$}] (u4) at (1.25,1.5) {};
        \node [vertex, label = above:{$u_5$}] (u5) at (2.5,1.5) {};
        \draw (-3.7,0.8) rectangle (3.7,2.35);
        \node at (-4.1,1) {$I$};

        \node [vertex, label = below:{$v_1$}] (v1) at (-2.5,0) {};
        \node [vertex, label = below:{$v_2$}] (v2) at (-1.25,0) {};
        \node [vertex, label = below:{$v_3$}] (v3) at (0,0) {};
        \node [vertex, label = below:{$v_4$}] (v4) at (1.25,0) {};
        \node [vertex, label = below:{$v_5$}] (v5) at (2.5,0) {};

        \foreach \from/\to in {v1/v2, v2/v3, v3/v4, v4/v5, u1/v1, u1/v3, u4/v4,
        v5/u5, u5/v3, u2/v2} 
        \draw [edge] (\from) to (\to);
        \draw [edge] (v5) to [bend left = 45] (v1);
        \draw [edge] (u2) to [bend left = 10] (v5);
        \draw [edge] (v1) to [bend left = 10] (u4);

        \foreach \from/\to in {u4/v3, u4/v5, u4/v2, u2/v4} 
        \draw [edge, dashed] (\from) to (\to);
    \end{scope}
    \end{tikzpicture}
\end{center}
In the first case (on the left), when $u_5v_3\not\in E(G)$ we quickly reach a contradiction
by noticing that $\{u_4,u_1,v_3,u_5,v_5\}$ induces a copy of $K_1+2K_2$ in $G$. In the
second case, when $u_5v_3\in E(G)$ we need a little more work.
First, consider the subcase when $u_4v_1\not\in E(G)$   (same picture on the left, but making 
$u_5v_3$ a solid edge and $u_4v_1$ a dashed edge). 
Then $u_4$ together with $u_1,v_3,u_5,v_5,v_1$ induce a copy of $K_1+C_5$ in $G$, which
contradicts the choice of $G$. So we assume that $u_4v_1\in E(G)$ (picture on the right).
With similar arguments
as before, there must be a neighbour $u_2$ of $v_2$ in $I$, and  since $G$ is $K_3$-free,
$v_2u_1, v_2u_4, v_2u_5\not\in E(G)$, in particular $u_2\not\in \{u_1,u_4,u_5\}$. 
For the same reason, $u_2$ is not adjacent to $v_1$ nor $v_3$. Symmetrically to some
previous arguments, if $u_2$ is not adjacent to $v_5$, then $u_2$ together with
$u_1,v_3,u_5,v_5,v_1$ induce a $K_1+ C_5$, so we assume that $u_2v_5\in E(G)$. 
Finally, notice that $u_2v_4\not\in E(G)$ because $G$ is triangle-free, hence
$\{v_1,u_2,v_2,u_4,v_4\}$ induces a copy of $K_1+2K_2$ in $G$, our final contradiction.
Therefore, $\{v_1,\dots, v_5\}\cap I\neq \varnothing$.
\end{proof}

\begin{lemma}\label{lem:ext-K1C5}
    Let $G$ be a $\{K_3,~K_1+ 2K_2,~K_1 + C_5\}$-free  graph, and $I\subseteq V(G)$ a 
    maximal independent set of vertices. If $G'$ is obtained from $G$ by adding a new vertex 
    $v'$ adjacent to exactly the vertices in $I$, then $G'$ is $(K_1 + C_5)$-free.
\end{lemma}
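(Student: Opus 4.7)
The plan is to follow the schema of Lemma~\ref{lem:ext-K12K2}. Assume for contradiction that $G'$ contains an induced copy of $K_1+C_5$. Since $G$ itself is $(K_1+C_5)$-free, this copy must use $v'$, so I would split cases according to the role of $v'$ in the copy.

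If $v'$ is the isolated vertex of $K_1+C_5$, then the induced $C_5$ lies entirely in $V(G)$, and none of its five vertices is adjacent to $v'$, hence none of them belongs to $N(v')=I$. This contradicts Lemma~\ref{lem:I-K1C5}. So it remains to handle the case where $v'$ sits on the cycle; write the copy as $v'c_2c_3c_4c_5v'$ with $w$ the isolated vertex of the $K_1$. Then $c_2,c_5\in I$ (they are neighbors of $v'$), whereas $c_3,c_4\notin I$ (they are non-neighbors of $v'$), and $w$ is not adjacent to any of $v',c_2,c_3,c_4,c_5$, so in particular $w\notin I$. By maximality of $I$ there is some $w'\in I$ with $ww'\in E(G)$, and since $c_2,c_5\in I$ have no edge to $w$, we get $w'\notin\{c_2,c_5\}$.

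From here I would split on the adjacency of $w'$ to $\{c_3,c_4\}$. If $w'$ were adjacent to both $c_3$ and $c_4$, then $\{w',c_3,c_4\}$ would be a triangle in $G$ (since $c_3c_4$ is a cycle edge), contradicting $K_3$-freeness. Otherwise the reflection symmetry of the cycle swapping $c_2\leftrightarrow c_5$ and $c_3\leftrightarrow c_4$ lets me assume $w'c_4\notin E(G)$. I then claim that $\{w,w',c_2,c_4,c_5\}$ induces $K_1+2K_2$ in $G$, with isolated vertex $c_2$ and edges $ww'$ and $c_4c_5$; this contradicts that $G$ is $(K_1+2K_2)$-free and closes the proof.

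The only step that requires care is verifying this last claim. The five vertices are distinct because $w,w'\notin\{c_2,c_4,c_5\}$ (using $w\notin I$, $w'\in I\setminus\{c_2,c_5\}$, and $w'\neq c_4$ since $w'\in I$ while $c_4\notin I$). The non-edges $w'c_2$, $w'c_5$, $c_2c_5$ follow from $I$ being independent; the non-edges $wc_2$, $wc_4$, $wc_5$ follow from $w$ being the isolated vertex of the $K_1+C_5$ copy; $c_2c_4$ is a non-edge of the $C_5$; and $w'c_4\notin E(G)$ is the case assumption. I expect this bookkeeping to be the only mildly tedious part of the argument; conceptually the proof is simpler than Lemma~\ref{lem:ext-K12K2}, because we manufacture a $K_1+2K_2$ directly rather than having to construct a further auxiliary vertex.
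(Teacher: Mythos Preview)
Your proof is correct and essentially identical to the paper's: both handle the case where $v'$ is the isolated vertex via Lemma~\ref{lem:I-K1C5}, and in the remaining case both pick a neighbour of the isolated vertex in $I$ and obtain either a triangle (if it is adjacent to both middle cycle vertices) or a $K_1+2K_2$ (otherwise), with the same five vertices. The only cosmetic difference is that you invoke the reflection symmetry $c_2\leftrightarrow c_5$, $c_3\leftrightarrow c_4$ to reduce to one subcase, whereas the paper writes out both subcases explicitly.
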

\begin{proof}
    Proceeding as in the previous lemmas, suppose that $v'$ belongs to
    some induced copy of $K_1 + C_5$ in $G'$. If $v'$ is the isolated vertex in this copy, 
    then there is a $5$-cycle in $G$ that does not intersect $I$, and this
    contradicts Lemma~\ref{lem:I-K1C5}. So, suppose there are vertices
    $v_2$, $v_3$, $v_4$, $v_5$ and $u$ such that $v', v_2, \dots, v_5$ is a $5$-cycle
    in $G$ and $u$ is not adjacent to any of these vertices. In particular, $u$ does not
    belong to $I$, and by the maximality of $I$ it has a neighbour $u'$ in $I$.
    This is creating the following configuration, where $u$ is only adjacent to $u'$.
    \begin{center}
    \begin{tikzpicture}[scale = 0.8]

    \begin{scope}[xshift=5cm]
        \node [vertex, label = above:{$v'$}] (v) at (0,3) {};
        
        \node [vertex, label = left:{$v_2$}] (2) at (-1.2,1.5) {};
        \node [vertex, label = right:{$v_5$}] (5) at (0,1.5) {};
        \node [vertex, label = right:{$u'$}]  (u1) at (1.2,1.5) {};
        \draw (-2,0.8) rectangle (2,2.2);
        \node at (-2.4,1) {$I$};

        \node [vertex, label = below:{$v_3$}] (3) at (-1.2,0) {};
        \node [vertex, label = below:{$v_4$}] (4) at (0,0) {};
        \node [vertex, label = below:{$u$}]  (u) at (1.2,0) {};

        \foreach \from/\to in {v/2, 2/3, 3/4, 4/5, 5/v, v/u1, u/u1} 
        \draw [edge] (\from) to (\to);

        \foreach \from/\to in {2/4, 3/5} 
        \draw [edge, dashed] (\from) to (\to);
    \end{scope}
    \end{tikzpicture}
\end{center}
    If $u'$ is not adjacent to $v_3$, then $\{v_5,v_2,v_3,u,u'\}$ induces a copy
    of $K_1 + 2K_2$ in $G$. Similarly, if $u'$ is not adjacent to $v_4$, then 
    $\{v_2,v_4,v_5,u,u'\}$ induces a copy of $K_1 + 2K_2$ in $G$. Either case 
    contradicts the fact that $G$ is $(K_1+2K_2)$-free. Hence, $u'$ is adjacent
    to both $v_3$ and $v_4$ creating a triangle in $G$, again a contradiction.
    Therefore, $G'$ is $(K_1 + C_5)$-free.
\end{proof}

Now we merge  Lemmas~\ref{lem:ext-C6}, \ref{lem:ext-K12K2}, and~\ref{lem:ext-K1C5} to obtain
the desired result.

\begin{lemma}\label{lem:maximal-v-extension}
    For every $\{K_3,~K_1+ 2K_2,~K_1 + C_5,~C_6\}$-free  graph $G$ there is a finite graph $H$
    with the following properties.
    \begin{itemize}
        \item $G$ is an induced subgraph of $H$.
        \item $H$ is  $\{K_3,~K_1+ 2K_2,~K_1 + C_5,~C_6\}$-free.
        \item $H$ is maximal triangle-free.
    \end{itemize}
\end{lemma}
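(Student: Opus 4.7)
The plan is to construct $H$ by iteratively adding new vertices to $G$, each adjacent to a maximal independent set of the current graph, until the resulting graph is maximal triangle-free. Concretely, set $G_0 := G$. At step $i$, if every pair of non-adjacent vertices of $G_i$ already shares a common neighbour, stop and let $H := G_i$; otherwise pick some non-adjacent $a,b \in V(G_i)$ without a common neighbour in $G_i$, extend the independent set $\{a,b\}$ to a maximal independent set $I_i$ of $G_i$, and form $G_{i+1}$ from $G_i$ by adding a new vertex $v_i'$ with $N(v_i') = I_i$.

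The easy part is checking inductively that each $G_i$ remains $\{K_3,~K_1+ 2K_2,~K_1 + C_5,~C_6\}$-free: the vertex $v_i'$ lies in no triangle because $I_i$ is independent, so $G_{i+1}$ is $K_3$-free, while the remaining three forbidden induced subgraph conditions are exactly the output of Lemmas~\ref{lem:ext-C6}, \ref{lem:ext-K12K2}, and \ref{lem:ext-K1C5} applied to $G_i$ and $I_i$, whose hypotheses are met by the inductive assumption on $G_i$.

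The main obstacle is termination, and the plan is to exhibit a strictly decreasing potential function. Let $b_i$ count the unordered non-adjacent pairs in $V(G_i)$ that have no common neighbour in $G_i$. Three observations together yield $b_{i+1} \le b_i - 1$: (1) any non-adjacent pair inside $V(G_i)$ that had a common neighbour in $G_i$ keeps it in $G_{i+1}$, since no edges are removed when adding $v_i'$; (2) the specific pair $\{a,b\}$ becomes good, because $v_i'$ is adjacent to both $a$ and $b$; (3) for every $w \ne v_i'$, the pair $\{v_i', w\}$ contributes nothing to $b_{i+1}$, because either $w \in I_i$ and the pair is adjacent, or $w \notin I_i$ and by maximality of $I_i$ there exists $u \in I_i$ adjacent to $w$, which is then a common neighbour of $v_i'$ and $w$. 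Since $G$ is finite, $b_0 \le \binom{|V(G)|}{2}$ is finite, the procedure halts in at most $b_0$ steps, and produces a finite $H$ satisfying the three required properties (triangle-freeness plus the common-neighbour condition is the definition of maximal triangle-free recalled in the preliminaries).
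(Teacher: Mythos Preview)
Your proof is correct and follows essentially the same approach as the paper's: both iteratively add a new vertex adjacent to a maximal independent set containing a chosen bad pair, invoke Lemmas~\ref{lem:ext-C6}, \ref{lem:ext-K12K2}, and~\ref{lem:ext-K1C5} to maintain the forbidden-subgraph conditions, and argue termination via the strictly decreasing count of non-adjacent pairs without a common neighbour. Your write-up of the potential-function argument is slightly more explicit than the paper's, but the content is identical.
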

\begin{proof}
    If $G$ is maximal triangle-free, take $H = G$. Otherwise, there are 
    non-adjacent vertices $x,y\in V(G)$ with no common neighbour in $G$. Extend $x,y$
    to a maximal independent set $I$ in $G$. Let $G'$ be the graph obtained from $G$
    by adding a new vertex $v'$ whose neighbours are exactly the vertices in
    $I$. Clearly, $G'$ is triangle-free. Moreover, it follows from Lemmas~\ref{lem:ext-C6},
    \ref{lem:ext-K12K2}, and~\ref{lem:ext-K1C5} that $G'$
    is  $\{K_3,~K_1+ 2K_2,~K_1 + C_5,~C_6\}$-free. Observe that $G'$
    has strictly less pairs of non-adjacent vertices without a common neighbour
    than $G$. Indeed, $v'$ is a common neighbour of $x$ and $y$, and by the maximality of $I$,
    any vertex $u$ not adjacent to $v'$ must have a neighbour in $I$, and thus a common
    neighbour with $v'$. The claim now follows inductively. 
\end{proof}

\subsection{Structural characterization of circular triangle-free graphs}

All lemmas in the previous subsection together with Theorem~\ref{thm:AgeC3+maximal}
build up to the following characterization of the age of
$\mathbb C_3$.

\begin{theorem}\label{thm:ageC3}
    The following statements are equivalent for a finite graph $G$. 
    \begin{itemize}
        \item $G$ is a circular triangle-free graph.
        \item $G$ is $\{K_3,~K_1+2K_2,~K_1+C_5,~C_6\}$-free.
        \item $G$ embeds into $K_{3k-1,k}$ for some positive integer $k$.
        \item $G$ admits a full-homomorphism to $K_{3k-1,k}$  for some positive integer $k$.
    \end{itemize}
\end{theorem}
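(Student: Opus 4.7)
The plan is to assemble this equivalence directly from the results already proved in the preceding subsections; nothing fundamentally new is needed. First I would observe that the equivalence of the first, third, and fourth items is exactly the content of Lemma~\ref{lem:3k-1/3}, so it only remains to fit the second item into this chain. The implication ``circular triangle-free $\Rightarrow$ $\{K_3, K_1+2K_2, K_1+C_5, C_6\}$-free'' is supplied by Lemma~\ref{lem:Age(C3)->F-free}. Thus the whole theorem reduces to showing that any finite $\{K_3, K_1+2K_2, K_1+C_5, C_6\}$-free graph $G$ is circular triangle-free.

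For this last direction, my plan is to bootstrap from the maximal case handled in Theorem~\ref{thm:AgeC3+maximal} by means of the vertex-extension machinery developed in the previous subsection. Explicitly, given a finite $\{K_3, K_1+2K_2, K_1+C_5, C_6\}$-free graph $G$, apply Lemma~\ref{lem:maximal-v-extension} to obtain a finite graph $H$ such that $G$ is an induced subgraph of $H$, $H$ is still $\{K_3, K_1+2K_2, K_1+C_5, C_6\}$-free, and $H$ is maximal triangle-free. Then Theorem~\ref{thm:AgeC3+maximal} applies to $H$ and yields that $H$ embeds into $\mathbb C_3$. Since the class of circular triangle-free graphs is hereditary (the restriction of an embedding $H\to \mathbb C_3$ to $V(G)$ is again an embedding), the induced subgraph $G$ also embeds into $\mathbb C_3$, i.e., $G$ is circular triangle-free.

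The only ``hard'' step here is invoking Lemma~\ref{lem:maximal-v-extension} in the right way: it lets us assume the graph in question is maximal triangle-free without leaving the forbidden-subgraph class, which is precisely the hypothesis needed to call Theorem~\ref{thm:AgeC3+maximal}. All remaining work is purely a matter of chasing implications between the four items, so the proof itself will be essentially a short diagram of references to Lemma~\ref{lem:3k-1/3}, Lemma~\ref{lem:Age(C3)->F-free}, Lemma~\ref{lem:maximal-v-extension}, and Theorem~\ref{thm:AgeC3+maximal}.
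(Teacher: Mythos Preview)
Your proposal is correct and matches the paper's proof essentially line for line: the paper also cites Lemma~\ref{lem:3k-1/3} for the equivalence of items 1, 3, 4, Lemma~\ref{lem:Age(C3)->F-free} for $1\Rightarrow 2$, and then closes the cycle by passing through Lemma~\ref{lem:maximal-v-extension} to reach a maximal triangle-free $\{K_3,K_1+2K_2,K_1+C_5,C_6\}$-free supergraph $H$ and invoking Theorem~\ref{thm:AgeC3+maximal} on $H$.
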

\begin{proof}
    The first item  implies the second one (Lemma~\ref{lem:Age(C3)->F-free}). 
    Lemma~\ref{lem:3k-1/3} shows that the first statement
    is equivalent to the last two items. Thus, it suffices to prove that the second
    statement implies any of the statements 1, 3, or 4;  we show that
    it implies the first one. Suppose that $G$ is $\{K_3,~K_1+2K_2,~K_1+C_5,~C_6\}$-free,
    and let $H$ be the 
    extension of $G$ guaranteed by Lemma~\ref{lem:maximal-v-extension}, i.e.,
    $G$ embeds into a $\{K_3,~K_1+2K_2,~K_1+C_5,~C_6\}$-free maximal triangle-free graph $H$. 
    We know that $H$ embeds into $\mathbb C_3$ via Theorem~\ref{thm:AgeC3+maximal}.
    By composing embeddings, we conclude that $G$ embeds into $\mathbb C_3$.
\end{proof}

Recall that a graph $G$ has circular chromatic number strictly less that $3$ if and
only if $G\in \Csp(\mathbb C_3)$. It is not hard to see that $G\to \mathbb C_3$ if and
only if there is an injective homomorphism  $i\colon G\to \mathbb C_3$. Clearly,
$G$ can be extended to a graph $G'$ such that  $i\colon G'\to \mathbb C_3$ is
an embedding. Therefore, $\chi_c(G) < 3$ if and only if it can  be extended to
a $\{K_3,~K_1+2K_2,~K_1+C_5,~C_6\}$-free graph (Theorem~\ref{thm:ageC3}).

\begin{corollary}\label{cor:circular<3}
The following statements are equivalent for any finite graph $G$.
    \begin{itemize}
        \item $\chi_c(G) < 3$.
        \item $G\in \Csp(\mathbb C_3)$.
        \item $G$ can be extended to a $\{K_3,~K_1+2K_2,~K_1+C_5,~C_6\}$-free graph.
    \end{itemize}
\end{corollary}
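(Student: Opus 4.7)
The plan is to establish the cycle
\[
\chi_c(G) < 3 \ \Longrightarrow\ G \in \Csp(\mathbb C_3) \ \Longrightarrow\ G \text{ extends to an } \mathcal F\text{-free graph} \ \Longrightarrow\ \chi_c(G) < 3,
\]
where $\mathcal F := \{K_3,\ K_1+2K_2,\ K_1+C_5,\ C_6\}$; the bulk of the work is carried by Theorem~\ref{thm:ageC3}, and the remaining steps translate between the three formulations. For the first arrow, I would invoke the $K_{p,q}$-characterization of $\chi_c$ from the preliminaries: it yields coprime integers $p,q$ with $G \to K_{p,q}$ and $p/q < 3$, so $p \le 3q - 1$. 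The standard monotonicity $K_{p,q} \to K_{a,b}$ iff $p/q \le a/b$ then gives $K_{p,q} \to K_{3q-1,q}$, and Lemma~\ref{lem:3k-1/3} furnishes an embedding of $K_{3q-1,q}$ into $\mathbb C_3$; composing produces the required $G \to \mathbb C_3$.

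The second implication is the one carrying content beyond bookkeeping. Given $f : G \to \mathbb C_3$, my plan is first to perturb $f$ into an \emph{injective} homomorphism $i : G \to \mathbb C_3$, and then to let $G' := (V(G),\ \{xy : i(x)i(y) \in E(\mathbb C_3)\})$; since $i$ is a homomorphism of $G$, the graph $G$ is a spanning subgraph of $G'$, and by construction $i : G' \hookrightarrow \mathbb C_3$ is an embedding, so Theorem~\ref{thm:ageC3} gives the desired $\mathcal F$-freeness of $G'$. The perturbation rests on two facts: $V(\mathbb C_3)$ is a countable dense subset of $S^1$, and the adjacency condition of $\mathbb C_3$ (both arcs strictly longer than $1/3$) is open on $S^1 \times S^1$. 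Consequently, one can choose $\epsilon > 0$ small enough that any $\epsilon$-perturbation of $f$ preserves all of the finitely many edge-conditions, and then replace each $f(v)$ by a distinct nearby vertex of $\mathbb C_3$, chosen one at a time.

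For the last implication, given an $\mathcal F$-free spanning supergraph $G'$ of $G$, Theorem~\ref{thm:ageC3} produces a homomorphism $G' \to K_{3k-1,k}$ for some positive integer $k$; composing with the inclusion $G \hookrightarrow G'$ then certifies $\chi_c(G) \le (3k-1)/k < 3$. The only genuinely subtle step in the whole argument is the density-based perturbation used to promote an arbitrary homomorphism $f : G \to \mathbb C_3$ into an injective one --- everything else is a direct citation of Lemma~\ref{lem:3k-1/3} and Theorem~\ref{thm:ageC3}, or a short manipulation with the circular complete graphs $K_{p,q}$.
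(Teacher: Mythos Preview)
Your argument is correct and follows essentially the same route as the paper: perturb a homomorphism $G\to\mathbb C_3$ to an injective one using density and the openness of the adjacency condition, pull back the edge relation to obtain the $\mathcal F$-free extension, and invoke Theorem~\ref{thm:ageC3}. The only cosmetic difference is that the paper treats the equivalence $\chi_c(G)<3 \Leftrightarrow G\in\Csp(\mathbb C_3)$ as immediate from the circular-colouring definition of $\chi_c$, whereas you route it through the $K_{p,q}$ characterization and Lemma~\ref{lem:3k-1/3}; both are equally valid.
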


\section{Unit circular-arc graphs}
\label{sec:UCA-graphs}

A \textit{circular-arc model} $\mathcal M$ of a graph $G$ is an ordered pair $(C,\mathcal A)$
where $C$ is a circle and $\mathcal A$ is a set of circular-arcs of $C$ such that $G$
is isomorphic to the intersection graph of $\mathcal A$. We say that $G$ 
is a \textit{circular-arc graph} if there is a circular-arc model $\mathcal M$ of $G$.
In this case, we say that $G$ \textit{admits} a circular-arc model. 
A \textit{proper} circular-arc model is a circular model $(C,\mathcal A)$ such that
no arc $A\in \mathcal A$ is contained in some other $B\in \mathcal A$. A circular-arc
model is a \textit{unit} circular-arc model if every arc in $\mathcal A$ has unit length, 
and a \textit{Helly} circular-arc model if $\mathcal A$ is a Helly family of sets, i.e., 
if whenever a subset of arcs $\mathcal A' \subseteq \mathcal A$  has non-empty
pairwise intersection, then the intersection of all arcs in $\mathcal A'$ is non-empty. 
We say that $G$ is a \textit{proper} (resp.\ \textit{unit}, or \textit{Helly}) circular-arc
graph if $G$ admits a proper (resp.\ unit, or Helly) circular-arc model $\mathcal M$. 
Finally, a graph $G$ is a \textit{proper Helly} (resp.\ \textit{unit Helly}) circular-arc
graph if it admits a circular-arc model which is both proper (resp.\ \textit{unit}) and Helly.
We refer the reader to~\cite{linDM309} for a survey on the class of circular-arc graphs and some
of its subclasses.

In this brief section, we use Theorem~\ref{thm:ageC3} to obtain a characterization
of $3K_1$-free unit Helly circular-arc graphs by finitely many forbidden minimal
obstructions. This characterization stems from the following immediate observation.

\begin{observation}\label{obs:immediate}
    A graph $G$ embeds into $\mathbb C_3$ if and only if $\overline G$ admits a circular-arc model
    consisting of unit closed circular-arcs of the circumference of length $3$ such that distinct  
    arcs do not have a common end point.
\end{observation}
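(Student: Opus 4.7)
The plan is a rescaling argument between the unit circle $S^1$ (with the normalisation used in the paper, so that arc length $1/3$ corresponds to central angle $2\pi/3$) and a circle $C$ of circumference $3$, linked by the map $x\mapsto 3x$. Under this correspondence, two points of $S^1$ at shorter-arc distance $>1/3$ map to two points of $C$ at distance $>1$; for closed unit arcs of $C$ centred at those points, this is precisely the condition that the arcs are disjoint and do not even share an endpoint. The boundary case of arc distance equal to $1/3$ (equivalently, centre distance equal to $1$) is exactly the case of two arcs sharing a single endpoint.

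For the forward direction, fix an embedding $f\colon G\hookrightarrow \mathbb C_3$ with $v\mapsto p_v$, and associate to each $v$ the closed unit arc $A_v\subseteq C$ centred at the rescaled image of $p_v$. Then $A_u\cap A_v\neq\varnothing$ iff the $S^1$-distance between $p_u$ and $p_v$ is $\leq 1/3$, iff $uv\notin E(G)$, iff $uv\in E(\overline G)$, so $(C, \{A_v\})$ is a circular-arc model of $\overline G$. To enforce that the arcs have pairwise distinct endpoints, I invoke Lemma~\ref{lem:3k-1/3}: $G$ embeds into some $K_{3k-1,k}$, and the explicit embedding of $K_{3k-1,k}$ into $\mathbb C_3$ given there places vertices at $S^1$-distances that are nonzero multiples of $1/(3k-1)$; since $3\nmid 3k-1$, none of them equals $1/3$, so composing yields an embedding of $G$ with no pair at distance exactly $1/3$, and the derived arcs have pairwise distinct endpoints.

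Conversely, given a circular-arc model $(C, \{A_v\})$ of $\overline G$ by closed unit arcs with no shared endpoints, let $c_v$ be the centre of $A_v$ and set $p_v := c_v/3 \in S^1$. The no-shared-endpoint hypothesis implies that every pairwise centre distance lies in $[0,1)\cup(1,3/2]$, so $A_u\cap A_v\neq\varnothing$ iff centre distance $<1$ iff $S^1$-distance $<1/3$. Hence $uv\in E(G)$ iff $uv\notin E(\overline G)$ iff $S^1$-distance $>1/3$, which is exactly the adjacency condition of $\mathbb C_3$. A small perturbation of the $p_v$ to nearby points at pairwise rational angles preserves all the (strict) inequalities and lands the perturbed points inside some admissible choice of $V(\mathbb C_3)$, yielding the desired embedding $G\hookrightarrow \mathbb C_3$.

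The only real obstacle is the treatment of the boundary case of $S^1$-distance $1/3$ (equivalently, centre distance $1$): it is ruled out in the reverse direction by the no-shared-endpoint hypothesis and, in the forward direction, by choosing the embedding through Lemma~\ref{lem:3k-1/3}. Once this case is dealt with, the equivalence is just the change of scale $x\mapsto 3x$.
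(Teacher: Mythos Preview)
The paper states this as an immediate observation and gives no proof, so there is nothing of the paper's to compare against directly. Your argument is correct.

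The only point of substance you raise is the boundary case of two image points at arc distance exactly $1/3$ (equivalently, two unit arcs on the length-$3$ circle meeting in a single endpoint), and your route through Lemma~\ref{lem:3k-1/3} to avoid it in the forward direction is valid for finite $G$, which is the context of Section~\ref{sec:UCA-graphs}. It is worth noting, though, that if ``rational angle'' in the definition of $\mathbb C_3$ is read as a rational number of radians (as the proof of Lemma~\ref{lem:3k-1/3} suggests, where the points $p_i$ at angles $2\pi i/(3k-1)$ are explicitly \emph{approximated} by points ``defined by a rational''), then the angle $2\pi/3$ is irrational and no two vertices of $\mathbb C_3$ can sit at arc distance exactly $1/3$. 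Under that reading the boundary case never occurs, the observation is genuinely immediate, and your detour through $K_{3k-1,k}$ is unnecessary though harmless. In the reverse direction your use of the no-shared-endpoint hypothesis is exactly what is needed; one small addition is that the hypothesis only constrains \emph{distinct} arcs, so coincident arcs (twins in $\overline G$) are still possible, but your final perturbation step can absorb this by also separating coincident centres.
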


 \begin{theorem}\label{thm:UCA}
     The following statements are equivalent for a graph $G$. 
     \begin{enumerate}
         \item $G$ is a $3K_1$-free unit Helly circular-arc graph.
         \item $G$ is $\{3K_1,~W_4,~W_5,~\overline{C_6}\}$-free. 
         \item $\overline G$ is an induced subgraph of $\mathbb C_3$.
         \item $G$ admits a unit circular-arc model consisting of closed arc in the circumference
         of length $3$ with no common end points. 
     \end{enumerate}
 \end{theorem}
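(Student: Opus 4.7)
The plan is to establish the cycle $(2) \Leftrightarrow (3) \Leftrightarrow (4) \Rightarrow (1) \Rightarrow (2)$. Three of the links are essentially immediate. Observation~\ref{obs:immediate} gives $(3) \Leftrightarrow (4)$ directly, upon replacing $G$ by $\overline G$. For $(2) \Leftrightarrow (3)$, I would take complements of the four forbidden graphs of Theorem~\ref{thm:ageC3}: $\overline{K_3} = 3K_1$, $\overline{K_1+2K_2} = W_4$, $\overline{K_1+C_5} = W_5$, while $\overline{C_6}$ stays $\overline{C_6}$. Hence $G$ is $\{3K_1, W_4, W_5, \overline{C_6}\}$-free iff $\overline G$ is $\{K_3, K_1+2K_2, K_1+C_5, C_6\}$-free, which by Theorem~\ref{thm:ageC3} is equivalent to $\overline G$ embedding into $\mathbb{C}_3$. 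For $(4) \Rightarrow (1)$, three pairwise disjoint closed unit arcs on a circle of length $3$ would tile it entirely and hence share endpoints; since this is excluded, $G$ is $3K_1$-free. A short endpoint analysis then shows that any three pairwise intersecting unit arcs with no common endpoint on a circle of length $3$ have a common point, which together with the standard fact that arcs on a circle have Helly number three yields the Helly property.

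The heart of the theorem is the implication $(1) \Rightarrow (2)$, which amounts to showing that none of $W_4$, $W_5$, and $\overline{C_6}$ arises as an induced subgraph of a $3K_1$-free unit Helly circular-arc graph. Fix such a putative representation with arcs of unit length on a circle of length $c$.

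For $W_k = K_1 \vee C_k$ with $k \in \{4,5\}$, let $u$ be the universal vertex of the copy of $W_k$ and $v_1,\dots,v_k$ the vertices of the cycle $C_k$. Applying Helly to each triple $\{A_u, A_{v_i}, A_{v_{i+1}}\}$ coming from an edge of $C_k$ shows that the subarcs $I_i := A_u \cap A_{v_i}$ are non-empty and satisfy $I_i \cap I_{i+1} \neq \emptyset$, while $I_i \cap I_j \subseteq A_{v_i} \cap A_{v_j} = \emptyset$ whenever $v_i, v_j$ are non-adjacent in $C_k$. Since $A_u$ is a proper arc, homeomorphic to a line interval, the $I_i$ form an interval representation of $C_k$, contradicting the well-known fact that $C_k$ is not an interval graph for $k \ge 4$.

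For $\overline{C_6}$, no universal vertex is available, so a different route is needed. Let $m_v$ denote the midpoint of $A_v$, so that $A_u \cap A_v \ne \emptyset$ iff the circular distance between $m_u$ and $m_v$ is at most $1$. Applying Helly to each of the two triangles $\{v_1,v_3,v_5\}$ and $\{v_2,v_4,v_6\}$ of the prism forces the midpoints of each triangle to lie inside some arc of length~$1$. The three matching edges $v_1v_4, v_2v_5, v_3v_6$ together with the six non-edges of the prism then impose that each $m_{2j}$ is within distance $1$ of exactly one of $m_1, m_3, m_5$ and at distance strictly greater than $1$ from the other two. A short case analysis on whether $m_1$ lies strictly between $m_3$ and $m_5$ in their common length-$1$ arc, or at one of its ends, shows that the resulting system of circular-distance constraints is infeasible, regardless of the circumference $c$. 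This step---exploiting the metric geometry of the arc model rather than a combinatorial obstruction to being an interval graph---is the principal obstacle of the proof.
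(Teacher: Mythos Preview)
Your cycle of implications and the handling of $(2)\Leftrightarrow(3)$, $(3)\Leftrightarrow(4)$, and $(4)\Rightarrow(1)$ match the paper almost verbatim. The divergence is entirely in $(1)\Rightarrow(2)$.

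The paper does \emph{not} treat $(1)\Rightarrow(2)$ as ``the heart of the theorem''. It dispatches this implication in two lines by citing the circular-arc literature: $W_4$ is known not to be a proper Helly circular-arc graph (Lin--Szwarcfiter), and $W_5$ and $\overline{C_6}$ are known not to be proper circular-arc graphs at all (see the survey of Lin--Szwarcfiter). Since every unit model is proper, a unit Helly circular-arc graph is in particular proper Helly, and the forbidden subgraphs follow immediately. From the paper's point of view, all the actual work sits inside Theorem~\ref{thm:ageC3}; Theorem~\ref{thm:UCA} is a corollary obtained by complementation plus these citations.

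Your alternative route---direct geometric arguments eliminating $W_4$, $W_5$, $\overline{C_6}$---is a legitimate and more self-contained strategy, and your $W_4,W_5$ argument via the Helly property and the non-intervality of $C_k$ is essentially correct. One point you gloss over: you need $I_i=A_u\cap A_{v_i}$ to be a genuine sub\emph{interval} of $A_u$, which fails if $A_{v_i}$ wraps around the gap of $A_u$. This is ruled out because the existence of a non-edge in $C_k$ forces the circumference $c\ge 2$, hence the gap of $A_u$ has length $c-1\ge 1=|A_{v_i}|$, so no unit arc can strictly contain it; but you should say so. Your $\overline{C_6}$ argument, by contrast, is only a sketch (``a short case analysis''), and this is exactly the case where the paper's citation does real work: $\overline{C_6}$ fails already at the level of proper circular-arc graphs, so no Helly or metric analysis is needed. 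If you keep your direct approach you must actually carry out that case analysis; otherwise, citing Tucker's characterisation of proper circular-arc graphs is both shorter and safer.
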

\begin{proof}
    The last two items are equivalent as observed in Observation~\ref{obs:immediate}.
    Since $\overline{K_3} \cong 3K_1$, $\overline{K_1 + K_2} \cong W_4$, 
    and $\overline{K_1 + C_5} \cong W_5$, the equivalence between the second and third
    statements is guaranteed by Theorem~\ref{thm:ageC3}. Thus, the last three 
    statements are equivalent, and clearly, the last item implies the first one. 
    Finally, the second item follows from the first one since 
    $W_4$ is not a Helly proper circular-arc graph (see, e.g., Corollary 5 
    in~\cite{linDAM116}), and $\overline{C_6}$ and $W_5$ are not even proper
    circular-arc graphs (see, e.g., Theorem 5 in~\cite{linDM309}), and thus
    not  unit circular-arc graphs.
\end{proof}

\section{Restating Brandt's condition}
\label{sec:brandt}
Given a graph $G$ and a vertex $v\in V(G)$, we denote by $N(v)$ the set of neighbours of $v$
(in $G$). Recall that two vertices $u,v\in V(G)$ are called \emph{twins} if $N(u) = N(v)$.  A
graph $G$ is called \textit{point-determining} if it
does not contain two distinct vertices that are twins.
Moreover, we say that $G$ is \textit{point-incomparable} if for any two distinct vertices
$u,v\in V(G)$ the sets $N(v)$ and $N(u)$ are incomparable. 

\begin{observation}\label{obs:twins}
If $\mathcal F$ is a set of point-determining graphs, then the class
of $\mathcal F$-free graphs is closed under addition of twins, i.e., if $u,v\in V(G)$ are distinct 
twins, then $G$ is $\mathcal F$-free if and only if $G-v$ is $\mathcal F$-free.
\end{observation}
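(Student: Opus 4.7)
The plan is to prove the observation directly by case analysis on how a potential forbidden induced subgraph could meet the twin pair $\{u,v\}$. One direction is essentially free, since $\mathcal F$-freeness is preserved under taking induced subgraphs: if $G$ is $\mathcal F$-free then so is $G-v$. The real content is the converse, which I would prove by contrapositive: assuming some $F\in\mathcal F$ embeds into $G$ via $\varphi\colon F\hookrightarrow G$, I will show that $F$ already embeds into $G-v$.

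First I would record the implicit fact that twins are non-adjacent: since $N(u)=N(v)$ and the graph is simple, $u\notin N(u)$ forces $u\notin N(v)$, so $uv\notin E(G)$. This is crucial for the step that uses that $\varphi$ is an embedding (not just a homomorphism).

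Next I split into three cases according to the fibre $\varphi^{-1}(\{u,v\})$. If neither $u$ nor $v$ lies in the image of $\varphi$, then $\varphi$ already witnesses that $F$ embeds into $G-v$, contradiction. If $v$ is in the image but $u$ is not, say $\varphi(y)=v$, I would define $\varphi'$ to agree with $\varphi$ away from $y$ and send $y$ to $u$. Injectivity is immediate because $u\notin\varphi(F)$; preservation of edges and non-edges at $y$ uses precisely $N(u)=N(v)$, and preservation elsewhere is inherited from $\varphi$. Since $u\ne v$, this $\varphi'$ maps into $G-v$, again contradiction. The remaining case is the heart of the argument: both $u$ and $v$ are in the image, say $\varphi(x)=u$ and $\varphi(y)=v$ with $x\ne y$. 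Here I would compute that $x$ and $y$ are twins in $F$: for every $z\in V(F)\setminus\{x,y\}$, the edge $xz$ lies in $E(F)$ iff $u\varphi(z)\in E(G)$ iff $\varphi(z)\in N(u)=N(v)$ iff $yz\in E(F)$, giving $N_F(x)\setminus\{y\}=N_F(y)\setminus\{x\}$; and since $uv\notin E(G)$ and $\varphi$ is an embedding, $xy\notin E(F)$, so in fact $N_F(x)=N_F(y)$. This contradicts the assumption that $F\in\mathcal F$ is point-determining.

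Combining the three cases, no such $\varphi$ can exist, so $G$ is $\mathcal F$-free whenever $G-v$ is. I do not expect any serious obstacle; the only mildly delicate step is the swap argument, where one has to notice that $u\notin\varphi(F)$ is exactly what is needed to keep the new map injective, and that $N(u)=N(v)$ gives both edge- and non-edge-preservation in one stroke.
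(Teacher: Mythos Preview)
Your argument is correct. The paper states this as an Observation without proof, so there is no proof to compare against; your write-up supplies exactly the standard argument one would expect. One small omission in your case split: you do not explicitly treat the case where $u$ lies in the image of $\varphi$ but $v$ does not. This is harmless, since then $\varphi$ already lands in $G-v$ and you are done as in your first case; but as written, your three cases on $\varphi^{-1}(\{u,v\})$ are not exhaustive, so you should either phrase the first case as ``$v\notin\varphi(V(F))$'' or add a sentence noting that the remaining case is immediate.
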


\begin{lemma}\label{lem:extension->free}
    Consider a maximal triangle-free graph $G$. If $G$ does not contain $H_{10}-v$ as a subgraph,
    then $G$ is a $\{K_3,~K_1+2K_2,~K_1+C_5,~C_6\}$-free graph.
\end{lemma}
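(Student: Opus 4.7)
The plan is to argue the contrapositive: assume $G$ is maximal triangle-free and contains an induced copy of some $H \in \{K_1+2K_2, K_1+C_5, C_6\}$, and show that $H_{10}-v$ is a (not necessarily induced) subgraph of $G$. The $K_3$ case is immediate since $G$ is triangle-free. Two ingredients of maximal triangle-freeness will be used throughout: (i) for every vertex $x$, the neighbourhood $N(x)$ is an independent set; and (ii) for every pair of non-adjacent vertices $x,y$, the intersection $N(x)\cap N(y)$ is non-empty.

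For an induced $C_6$ $v_1 v_2 v_3 v_4 v_5 v_6$, I pick, for each antipodal pair $(v_1,v_4), (v_2,v_5), (v_3,v_6)$, a common neighbour $z_{14}, z_{25}, z_{36}$ in $G$. Triangle-freeness forces these three vertices to be pairwise distinct and distinct from the $v_i$'s, and each $z_{ij}$ is adjacent, among the cycle vertices, only to $v_i$ and $v_j$ (any further adjacency would complete a triangle with one of the cycle edges). A direct identification then shows that the $9$ resulting vertices together with the $12$ forced edges (six cycle edges and six ``antipodal spokes'') are isomorphic to $H_{10}-v$, since in $H_{10}-v$ the six degree-$3$ vertices span an induced $6$-cycle whose three antipodal pairs are precisely the neighbourhoods of the three degree-$2$ vertices. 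For an induced $K_1+C_5$ with isolated vertex $u$ and $5$-cycle $v_1\ldots v_5$, each vertex of $N(u)$ is adjacent to an independent set of $C_5$, hence to at most two of the $v_j$'s, so picking $w_i\in N(u)\cap N(v_i)$ for every $i$ and counting yields three distinct vertices $w_a,w_b,w_c\in N(u)$ that together cover $\{v_1,\ldots,v_5\}$. Using the rotational symmetry of $C_5$ I fix the covering pattern, and an explicit vertex correspondence then realizes $H_{10}-v$ as a subgraph on $\{u,v_1,\ldots,v_5,w_a,w_b,w_c\}$.

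For an induced $K_1+2K_2$ with isolated vertex $u$ and edges $\{a_1,a_2\},\{b_1,b_2\}$, I pick $p\in N(a_1)\cap N(b_1)$ and $q\in N(a_2)\cap N(b_2)$. Triangle-freeness forces the six vertices $\{p,a_1,a_2,q,b_2,b_1\}$ to carry the $6$-cycle $p\,a_1\,a_2\,q\,b_2\,b_1\,p$, and the only edge that could prevent this cycle from being induced is $pq$. If $pq$ is a non-edge the cycle is induced, so we reduce to the $C_6$ case. If $pq$ is an edge, triangle-freeness forces at least one of $up,uq$ to be a non-edge; say $up$. Introducing $s\in N(a_2)\cap N(b_1)$ gives an induced $C_5$ on $\{p,a_1,a_2,s,b_1\}$: if $us$ is a non-edge then $u$ is isolated from this $C_5$ and we obtain an induced $K_1+C_5$. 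Otherwise I introduce $t\in N(a_1)\cap N(b_2)$ and argue symmetrically on the induced $C_5$ $\{q,a_2,a_1,t,b_2\}$; if $ut$ is a non-edge we again get $K_1+C_5$, and if $ut$ is also an edge then $st$ is forced to be a non-edge (else $stu$ would be a triangle), so $\{t,a_1,p,b_1,s,u\}$ is an induced $C_6$ in $G$. The main obstacle is this sub-case analysis for $K_1+2K_2$ with $pq$ an edge: the consequences of triangle-freeness must be tracked carefully at each branch so that distinctness of the auxiliary vertices $s,t$ from everything introduced earlier is guaranteed and no spurious triangles appear, but each branch reduces to the previously treated $C_6$ or $K_1+C_5$ case, completing the proof.
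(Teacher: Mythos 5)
Your $C_6$ case is exactly the paper's argument and is fine, but the other two cases have genuine gaps. In the $K_1+C_5$ case the counting step is invalid: from the fact that each vertex of $N(u)$ meets at most two of the $v_j$ you may conclude that \emph{at least} three distinct vertices of $N(u)$ are needed to cover the cycle, not that three distinct vertices of $N(u)$ \emph{suffice}. Nothing prevents the chosen $w_1,\dots,w_5$ from being five distinct vertices each adjacent to exactly one $v_i$, in which case no three of them cover $\{v_1,\dots,v_5\}$, and the $9$-vertex configuration you need --- which, once you fix the $6$-cycle of $H_{10}-v$, forces some $w$ to be adjacent to a pair such as $\{v_1,v_4\}$, i.e.\ to be a common neighbour of the independent \emph{triple} $\{u,v_1,v_4\}$ --- is simply not available. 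Maximal triangle-freeness only provides common neighbours of non-adjacent \emph{pairs}. There is also a smaller gap in the $K_1+2K_2$ analysis: in the branch where $pq\in E$, $up\notin E$, $us\in E$ and $ut\notin E$, you invoke ``symmetry'' to obtain an induced $K_1+C_5$ on $\{u,q,a_2,a_1,t,b_2\}$, but this requires $uq\notin E$, whereas you only established that at least one of $up,uq$ is a non-edge and chose $up$; if $uq$ is an edge, $u$ is not isolated from that $5$-cycle and the branch is unresolved.

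The paper closes both holes by ordering the cases the other way around. It first rules out induced $C_6$ (your argument), and then observes that in a maximal triangle-free, $C_6$-free graph \emph{every independent triple has a common neighbour}: otherwise the triple together with pairwise common neighbours of its three pairs induces a $6$-cycle. With triple common neighbours available, an induced $K_1+2K_2$ on $\{t\}\cup\{x,y\}\cup\{u,v\}$ yields $a\in N(t)\cap N(x)\cap N(u)$ and $b\in N(t)\cap N(y)\cap N(v)$ with $a\not\sim b$ (both are neighbours of $t$), so $a,x,y,b,v,u$ induce a $C_6$ --- contradiction; and an induced $K_1+C_5$ yields $a\in N(t)\cap N(c_1)\cap N(c_3)$, so that $\{c_2,t,a,c_4,c_5\}$ induces $K_1+2K_2$ --- contradiction again. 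If you want to keep your direct construction for $K_1+C_5$, you would still have to first dispose of the $C_6$ case and then prove the triple-common-neighbour property in order to produce $w_a\in N(u)\cap N(v_1)\cap N(v_4)$ and $w_b\in N(u)\cap N(v_3)\cap N(v_5)$; at that point you have essentially reconstructed the paper's cascade.
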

\begin{proof}
    Suppose for contradiction 
    that $G$ is maximal triangle-free but not 
    $\{K_3,~K_1+2K_2,~K_1+C_5,~C_6\}$-free. 
    First assume that $G$ contains $C_6$ as an induced subgraph; let
    $c_1,c_2,c_3,c_4,c_5,c_6$ be an induced $6$-cycle in $G$. Since $G$ is maximal
    triangle-free, there are three vertices $a_1,a_2,a_3$ such that $a_i$
    is a common neighbour of $c_i$ and $c_{i+3}$ for each $i\in \{1,2,3\}$.
    Since $G$ is triangle-free, the inequalities $a_1\neq a_2\neq a_3 \neq a_1$ hold. 
    Hence, this subgraph of $G$ induced by $c_1,\dots, c_6,a_1,a_2,a_3$ contains
    $H_{10}-v$ as a subgraph contradicting the choice of $G$. Therefore, 
    $G$ is $C_6$-free. Also notice that if there are three independent vertices
    $x_1,x_2,x_3$ that do not have a common neighbour, then we can find an 
    induced $C_6$ in $G$ by considering the common neighbour of $x_i$ and $x_j$
    for $1\le  i< j \le 3$. Thus, for here onward we assume that every $3$ independent
    vertices  have a common neighbour. 

    Suppose that $t$, $x$, $y$, $u$, and $v$ induce a copy of $K_1 + 2K_2$ in  $G$ 
    where $xy,uv\in E(G)$. Let $a$ be a common neighbour of $t,x,u$ and $b$
    a common neighbour of $t,y,u$. Since $G$ is triangle-free and $ta,tb\in E(G)$,
    the vertices $a$ and $b$ are not adjacent. From this observation and
    the fact that $x,y,u,v$ induce a copy of $2K_2$ in $G$, we conclude that 
    $a,x,y,b,v,u$ induce a $6$-cycle in $G$. In the paragraph above we proved that
    no such cycle exists in $G$. Therefore, $G$ is $(K_1 + 2K_2)$-free. 

    Finally, we assume that $G$ is not $(K_1 + C_5)$-free and conclude that $G$
    contains an induced copy of $K_1 + 2K_2$ contradicting our conclusion from 
    the previous paragraph. Suppose that $c_1,c_2,c_3,c_4,c_5$ is an induced
    $5$-cycle in $G$, and that $t$ is vertex of $G$ which is not adjacent to
    any vertex of the $5$-cycle. Let $a$ be a common neighbour of $t,c_1$ and
    $c_3$. Since $G$ is triangle-free, $a$ is not adjacent to any other vertex
    of the $5$-cycle. Therefore, the set  $\{c_2,t,a,c_4,c_5\}$ satisfy that
    $ta$ and $c_4c_5$ are the only edges with both end-vertices in this set,
    i.e., the set  $\{c_2,t,a,c_4,c_5\}$ induces a copy of $K_1 + 2K_2$ in $G$. 
    Putting these three paragraphs together, we conclude that 
    $G$ is $\{K_3, K_1 + 2K_2, K_1 + C_5, C_6\}$-free.
\end{proof}

The following two lemmas can be regarded as subcases of Lemma~\ref{lem:free->extension}, where we show that any 
$\{K_3,~K_1+2K_2,~K_1+C_5,~C_6\}$-free graph can be extended to 
a maximal triangle-free graph that is also
$\{K_1+2K_2,~K_1+C_5,~C_6\}$-free. Equivalently, every maximal
$\{K_3,~K_1+2K_2,~K_1+C_5,~C_6\}$-free graph is also maximal triangle-free.
In turn, we will use this to prove the main theorem of this section
where we propose equivalent conditions to Brandt's condition from 
Theorem~\ref{thm:brandt}.

To simplify our notation for adding edges to a graph $G$, given 
non-adjacent vertices $x,y \in V(G)$,  we denote by $G + xy$ the graph
$(V(G), E(G) \cup \{xy\})$. Notice that
we use the same notation as for the disjoint union of graphs, but it should
always be clear from the context to which operation we refer to. 

\begin{lemma}\label{lem:.--}
    Let $G$ be a $\{K_3,~K_1+2K_2,~K_1+C_5,~C_6\}$-free point-incomparable graph. 
    For any two distinct non-adjacent vertices $x,y \in V(G)$ either 
    \begin{itemize}
        \item $G + xy$ contains a triangle, or
        \item $G + xy$ is $\{K_3,~K_1+C_5,~C_6\}$-free.
    \end{itemize}
\end{lemma}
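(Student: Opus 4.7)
The plan is to assume that $G+xy$ contains no triangle and show that the second bullet holds, i.e., that $G+xy$ is $(K_1+C_5)$-free and $C_6$-free. Since $G$ itself contains none of these forbidden subgraphs, any newly created induced copy in $G+xy$ must include the edge $xy$, which heavily restricts the configuration.

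\emph{The $K_1+C_5$ case.} The edge $xy$ must lie on the 5-cycle (the fifth vertex is isolated). Label the cycle as $x, y, v_1, v_2, v_3$ with isolated vertex $u$; then in $G$ the cycle contributes edges $yv_1, v_1v_2, v_2v_3, v_3x$. The set $\{u, x, v_3, v_1, y\}$ induces $K_1+2K_2$ in $G$: the two edges are $xv_3$ and $v_1y$, $u$ is isolated, and all remaining pairs ($xy$, $xv_1$, $v_3v_1$, $v_3y$, together with the edges from $u$) are non-edges forced by the induced $K_1+C_5$ in $G+xy$ or by $xy \notin E(G)$. This contradicts $(K_1+2K_2)$-freeness of $G$; note that this step uses neither the triangle-free hypothesis nor point-incomparability.

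\emph{The $C_6$ case: setup.} An induced $C_6$ in $G+xy$ through $xy$ produces an induced $P_6$ in $G$, say $x - a - b - c - d - y$. Point-incomparability of $G$ applied to the non-adjacent pair $(x, b)$ provides a vertex in $N(x) \setminus N(b)$; but the only neighbor of $x$ inside the $P_6$ is $a$, and $a \sim b$, so this witness must lie outside the $P_6$. Hence there is $p \notin \{x, a, b, c, d, y\}$ with $p \sim x$ and $p \not\sim b$. Triangle-freeness of $G$ applied to $xa$ forces $p \not\sim a$. This extra vertex $p$, whose existence crucially relies on point-incomparability, is the key.

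\emph{The $C_6$ case: case analysis on $p$.} Split on the adjacencies of $p$ to $c, d, y$. If $p \sim c$, then $\{x, a, b, c, p\}$ induces a $C_5$ in $G$; since $y$ is non-adjacent to each of $x, a, b, c$ (induced $P_6$), avoiding an induced $K_1+C_5$ in $G$ forces $p \sim y$, but then $\{x, y, p\}$ is a triangle in $G+xy$, contradicting the standing assumption. So $p \not\sim c$. If $p \sim y$, the same triangle $\{x,y,p\}$ appears in $G+xy$, so $p \not\sim y$. If $p \sim d$, then $x - a - b - c - d - p - x$ is an induced $C_6$ in $G$: every potential chord is ruled out, the chords among $\{x,a,b,c,d\}$ by the induced $P_6$, and the chords involving $p$ (namely $pa$, $pb$, $pc$) by triangle-freeness and the choices above. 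This contradicts $G$ being $C_6$-free. Finally, if $p \not\sim d$ too, then $p$ is non-adjacent to all of $a, b, c, d, y$, so $\{p, a, b, d, y\}$ induces $K_1 + 2K_2$ in $G$ (edges $ab$ and $dy$, with the remaining non-edges $ad, ay, bd, by$ coming from the induced $P_6$), again contradicting $(K_1+2K_2)$-freeness. The main obstacle is exactly this four-way case analysis and checking, in each branch, that all potential chords are excluded so that the extracted subgraph really is induced.
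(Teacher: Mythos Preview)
Your proof is correct and follows essentially the same approach as the paper: for $K_1+C_5$ you extract the same $K_1+2K_2$ from the broken $5$-cycle, and for $C_6$ you use point-incomparability to find an extra neighbour of an endpoint of the induced $P_6$ and then case-split on its adjacencies. The only cosmetic difference is that the paper applies point-incomparability to the pair $(y,c)$ (in your labeling) and, after immediately noting $p\not\sim y$ from triangle-freeness of $G+xy$, needs only a two-way split on $p\sim c$ versus $p\not\sim c$, whereas you split into four cases; in particular your $p\sim d$ case is subsumed by the $K_1+2K_2$ on $\{y,b,c,x,p\}$ once $p\not\sim c$ and $p\not\sim y$ are known.
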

\begin{proof}
    Suppose that the first item does not hold, and anticipating a contradiction
    assume that $G + xy$ is not $\{K_3,~K_1+C_5,~C_6\}$-free. Suppose first that
    $G + xy$ contains an induced $5$-cycle $c_1,c_2,c_3,c_4,c_5$ and a vertex 
    $b$ not adjacent to any of $c_1,\dots, c_5$. Since $G$ is $K_1 + C_5$-free,
    it must be the case that, without loss of generality, $c_1 = x$ and $c_2 = y$. 
    Since $xy\not \in E(G)$, it follows that the set of vertices $\{b,c_5,x,y,c_2\}$
    induce a copy of $2K_2 + K_1$ in $G$, contradicting the choice of $G$. Now, 
    suppose that $G + xy$ contains an induced $6$-cycle. With similar arguments as before,
    we can assume that such cycle is of the form $c_1,c_2,c_3,c_4,c_5,c_6$ where
    $c_1 = x$ and $c_6 = y$. Since $G$ is point-incomparable, $c_6$ has a neighbour $a$
    (in $G$) a which is not a neighbour of $c_4$. Since $G$ is triangle-free,
    $a$ is not a neighbour of $c_5$, and since $G + c_1c_6$ does not contain
     a triangle, the vertex $a$ is not adjacent to $c_1$. The following is a depiction of
     the previous structure (in $G$) where dashed line segments represent non-edges
     (in $G$). 
\begin{center}
\begin{tikzpicture}[scale = 0.8]

  \begin{scope}[xshift=5cm]
    \node [vertex, label = left:{$c_1$}] (1) at (-0.6,0) {};
    \node [vertex, label = left:{$c_2$}] (2) at (-0.6,1) {};
    \node [vertex, label = left:{$c_3$}] (3) at (-0.6,2) {};
    \node [vertex, label = right:{$c_4$}] (4) at (0.6,2) {};
    \node [vertex, label = right:{$c_5$}] (5) at (0.6,1) {};
    \node [vertex, label = 45:{$c_6$}]  (6) at (0.6,0) {};

    \node [vertex, label = right:{$a$}]  (a) at (1.8,0) {};

    \foreach \from/\to in {1/2, 2/3, 3/4, 5/6, 4/5, 6/a}     
    \draw [edge] (\from) to (\to);

    \foreach \from/\to in {6/1, 1/4, 1/5}     
    \draw [edge, dashed] (\from) to (\to);
    \draw [edge, dashed] (a) to [bend right = 20] (4);
    \draw [edge, dashed] (3) to [bend left] (1);
    \draw [edge, dashed] (a) to [bend left] (1);
  \end{scope}

\end{tikzpicture}
\end{center}
     Finally, we consider two possible cases, either $c_3a\in E$ or
     $c_3a\not\in E$. In the former, the vertices $c_3,c_4,c_5,c_6,a$ induce
     a $5$-cycle in $G$, and $c_1$ is not adjacent to neither of these,
    contradicting the fact that $G$ is $K_1 + c_5$-free. Otherwise, the set
    $\{c_1,c_3,c_4,c_6,a\}$ induces a copy of $K_1 + 2K_2$ in $G$, contradicting
    again the choice of $G$.
    Therefore, if $G + xy$ does not contain a triangle, then $G + xy$ is  a 
    $\{K_3,~K_1+C_5,~C_6\}$-free graph. 
\end{proof}

Building on this lemma we prove the following one. 

\begin{lemma}\label{lem:C6}
    Let $G$ be a $\{K_3,~K_1+2K_2,~K_1+C_5,~C_6\}$-free point-incomparable graph. 
    For any two distinct non-adjacent vertices $x,y \in V(G)$  either 
    \begin{itemize}
        \item $G + xy$ contains a triangle, or 
        \item $G + xy$ is $\{K_3,K_1+2K_2\}$-free.
    \end{itemize}
\end{lemma}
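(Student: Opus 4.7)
The plan is to argue by contradiction: assume $G+xy$ is triangle-free but contains an induced $K_1+2K_2$. Since $G$ is itself $(K_1+2K_2)$-free, this copy must use the new edge $xy$, so after relabeling its vertex set is $\{t,x,y,u,v\}$ with $t$ isolated, $xy$ and $uv$ the two matched edges, and in $G$ the only edge on these five vertices is $uv$. The triangle-freeness of $G+xy$ translates to $N_G(x)\cap N_G(y)=\varnothing$.

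I would first prove a structural claim: every $a\in N_G(x)$ is adjacent in $G$ to $u$ or to $v$, and symmetrically for every $b\in N_G(y)$. Otherwise $\{y,a,x,u,v\}$ would induce only the edges $xa$ and $uv$ in $G$ (using $a\notin N_G(y)$ by disjointness of the neighbourhoods), producing a forbidden $K_1+2K_2$ in $G$ with $y$ as the isolated vertex, contradicting the hypothesis on $G$.

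Next I would exploit point-incomparability. Incomparability of $N_G(x)$ with $N_G(u)$, which contains $v\notin N_G(x)$, yields some $x'\in N_G(x)\setminus N_G(u)$, and the claim then forces $x'\in N_G(v)$. Symmetrically, incomparability of $N_G(x)$ with $N_G(v)$ yields some $a\in N_G(x)\cap N_G(u)$ with $a\notin N_G(v)$ (by triangle-freeness of $G$). Pick any $b\in N_G(y)$, which is non-empty since $|V(G)|\geq 2$ and $G$ is point-incomparable; the claim puts $b$ in $N_G(u)\cup N_G(v)$. If $b\in N_G(u)$, then $\{v,x,a,b,y\}$ should induce a $K_1+2K_2$ in $G$ with $v$ isolated and edges $xa,by$; if instead $b\in N_G(v)$, then $\{u,x,x',b,y\}$ does the analogous job with $u$ isolated and edges $xx',by$. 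Either way $G$ contains a forbidden $K_1+2K_2$, the desired contradiction.

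The main obstacle is the bookkeeping needed to certify that each candidate $K_1+2K_2$ is genuinely induced on five distinct vertices. The required non-edges come from three sources: triangle-freeness of $G$ together with $uv\in E(G)$ kills edges such as $va,vb,ab$ in the first case (and $ux',ub,x'b$ in the second); the disjointness $N_G(x)\cap N_G(y)=\varnothing$ kills $ay$ and $xb$; and the original setup of the $K_1+2K_2$ in $G+xy$ supplies the non-edges between $\{u,v\}$ and $\{x,y\}$. Distinctness of the five vertices follows from the same ingredients. Once these routine checks are made, the contradiction is immediate.
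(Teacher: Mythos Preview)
Your argument is correct, and it takes a genuinely different route from the paper's proof. The paper first uses point-incomparability between $y$ and the isolated vertex $t$ to find a vertex $a\in N_G(y)\setminus N_G(t)$, then point-incomparability between $u$ and $a$ to find a vertex $b\in N_G(u)\setminus N_G(a)$, and concludes by a three-way case split on the adjacency of $b$ to $x$ and $y$; one case yields a $K_1+2K_2$ in $G$, another yields a $K_1+C_5$ in $G$, and the third produces an induced $C_6$ in $G+xy$, which is then ruled out by invoking Lemma~\ref{lem:.--}. Your proof instead ignores $t$ entirely and applies point-incomparability of $x$ against $u$ and against $v$ to obtain two neighbours $a,x'$ of $x$ attached to opposite ends of the edge $uv$; a single neighbour $b$ of $y$ then gives a $K_1+2K_2$ in $G$ regardless of which end of $uv$ it attaches to. The payoff is that your argument is self-contained (no appeal to Lemma~\ref{lem:.--}) and in fact never uses the hypotheses that $G$ is $(K_1+C_5)$-free or $C_6$-free --- only triangle-freeness, $(K_1+2K_2)$-freeness, point-incomparability, and $N_G(x)\cap N_G(y)=\varnothing$ are needed. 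So you have actually proved a slightly stronger statement with a shorter proof.
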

\begin{proof}
Our proof is by contradiction. Suppose that $t, x,y, u, v$ induce a copy of $K_1+ 2K_2$ in $G+xy$,
and that $G+xy$ is triangle-free, i.e, $x$ and $y$ do not have a common neighbour in $G$. 
Since $G$ is point-incomparable, there is a vertex $a$ adjacent to $y$ but not to $t$.
Observe that $a$ must be a neighbour of $u$ or $v$ because otherwise $t$, $y$, $a$, 
$u$, $v$ would induce a copy of $K_1 + 2K_2$ in $G$. Also $u,v,a$ cannot induce
 a triangle in $G$, so we assume without loss of generality that $a$ is adjacent
 to $v$ and not to $u$. Again, using the fact that $G$ is point-indistinguishable, 
 we find a vertex   $b$ adjacent to $u$ but not to  $a$. The following diagram
 depicts the current scenario, where dashed edges represent non-adjacent vertices
 in $G$. 
\begin{center}
    \begin{tikzpicture}[scale = 0.8]

    \begin{scope}[xshift=5cm]
        \node [vertex, label = above:{$t$}] (t) at (0,3) {};
        \node [vertex, label = left:{$x$}] (x) at (-0.6,1.5) {};
        \node [vertex, label = 45:{$y$}] (y) at (0.6,1.5) {};
        \node [vertex, label = below:{$u$}] (u) at (-0.6,0) {};
        \node [vertex, label = below:{$v$}] (v) at (0.6,0) {};
    
        \node [vertex, label = right:{$a$}]  (a) at (1.8,1.5) {};
        \node [vertex, label = left:{$b$}]  (b) at (-1.8,0) {};

        \foreach \from/\to in {u/v, a/y, b/u, v/a} 
        \draw [edge] (\from) to (\to);

        \foreach \from/\to in {t/x, t/y, t/a, u/x, u/y, v/x, v/y, x/y} 
        \draw [edge, dashed] (\from) to (\to);
        \draw [edge, dashed] (a) to [bend right = 10] (b);
    \end{scope}
    \end{tikzpicture}
\end{center}
Since $G + xy$ does not create a triangle, $b$ is a neighbour of at most
one of $x$ or $y$. We conclude by considering the remains three possible cases:
\begin{itemize}
    \item $b$ is not a neighbour of $x$ nor $y$. So, $x,b,u,y,a$ create
    a copy of $K_1 + 2K_2 $ in $G$, contradicting the choice of $G$. 
    \item $b$ is adjacent to $y$ but not to $x$. So, $x$ together
    with $b,y,a,u,v$ create a copy of $K_1 + C_5$ in $G$, contradicting again the
    choice of $G$. 
    \item $G$ is adjacent to $x$ but not to $y$. In this case, 
    $x,b,u,v,a,y$ induce a $6$-cycle in $G+xy$, but this, together with 
    the fact that $G+xy$ is triangle-free, contradicts Lemma~\ref{lem:C6} .
\end{itemize}
Therefore, we conclude that either $G + xy$ is not triangle-free, or 
$G+x y$ is $\{K_3,K_1 + 2K_2\}$-free.
\end{proof}

\begin{lemma}\label{lem:free->extension}
    Every $\{K_3,~K_1+2K_2,~K_1+C_5,~C_6\}$-free graph $G$ can be extended to a maximal 
    triangle-free graph that contains neither $K_1+2K_2$, $~K_1+C_5$, nor $C_6$ as an induced 
    subgraph.
\end{lemma}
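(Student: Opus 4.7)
The plan is to iterate edge additions on $G$ until a maximal triangle-free graph is obtained, preserving the forbidden-subgraph profile at each step by invoking Lemmas~\ref{lem:.--} and~\ref{lem:C6}. The crucial observation is that whenever two vertices $x, y$ of a $\{K_3, K_1+2K_2, K_1+C_5, C_6\}$-free point-incomparable graph are non-adjacent and have no common neighbour, the graph $G + xy$ is triangle-free, so combining the conclusions of the two lemmas yields that $G + xy$ is simultaneously $\{K_3, K_1+C_5, C_6\}$-free and $\{K_3, K_1+2K_2\}$-free, hence still in our class. Since $|V(G)|$ is finite, the edge-addition process must halt at a graph $H$ in which every non-adjacent pair has a common neighbour; that is, a maximal triangle-free supergraph of $G$ that contains none of $K_1+2K_2$, $K_1+C_5$, $C_6$.

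To legitimately invoke Lemmas~\ref{lem:.--} and~\ref{lem:C6}, the working graph must be point-incomparable. I would first check by direct case analysis that each of $K_3$, $K_1+2K_2$, $K_1+C_5$, and $C_6$ is point-determining, so that Observation~\ref{obs:twins} tells us the forbidden-subgraph class is closed under identification and re-insertion of twin vertices. I would then preprocess $G$ by collapsing twin pairs, perform the edge-addition process on the resulting point-determining graph, and re-introduce the twins at the end. Re-inserting a twin $u'$ of some $v$ cannot create a new non-adjacent pair without a common neighbour (any would-be witness for $u'$ and $w$ is already a common neighbour of $v$ and $w$), so the lifted graph remains maximal triangle-free and $\{K_3, K_1+2K_2, K_1+C_5, C_6\}$-free.

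The main obstacle is that being point-determining does not imply being point-incomparable: even after twin-collapsing there may remain distinct vertices $u, v$ with $N(u) \subsetneq N(v)$, and such comparable pairs block the neighbour-selection steps in the proofs of Lemmas~\ref{lem:.--} and~\ref{lem:C6}. I expect the cleanest route around this to be a minimal-counterexample argument: assume $G$ is a minimum-vertex counterexample to the statement. The twin-collapsing discussion above forces $G$ to have no twins; a comparable pair $N(u) \subsetneq N(v)$ would analogously allow a reduction to a strictly smaller graph in the class whose maximal extension, guaranteed by minimality, lifts back to an extension of $G$, contradicting minimality. Hence $G$ is point-incomparable, and the greedy edge-addition described in the first paragraph goes through unobstructed. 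The delicate point to verify is the lifting step in the proper-containment case, which I anticipate requires a careful local modification ensuring none of the four forbidden induced subgraphs is reintroduced.
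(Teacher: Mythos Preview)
Your approach is essentially the paper's, and the ingredients are all there, but two points deserve sharpening.

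First, the lifting step you call ``delicate'' is not: if $N(u)\subsetneq N(v)$, delete $u$, extend $G-u$ to the desired $H'$, and then reinsert $u$ as a \emph{twin of $v$} in $H'$. The result is an extension of $G$ because $N_G(u)\subseteq N_G(v)\subseteq N_{H'}(v)$; it remains $\{K_3,K_1+2K_2,K_1+C_5,C_6\}$-free by Observation~\ref{obs:twins} (exactly as in your twin case); and it is still maximal triangle-free since $u$ inherits all common-neighbour witnesses from $v$. This is precisely what the paper does, and it requires no further local modification.

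Second, there is a genuine gap in the sentence ``the greedy edge-addition\ldots\ goes through unobstructed.'' Point-incomparability is \emph{not} preserved by edge addition: after forming $G+xy$ you may acquire a pair $u,v$ with $N(u)\subseteq N(v)$, and then Lemmas~\ref{lem:.--} and~\ref{lem:C6} no longer apply to the next step. The paper handles this by interleaving the two moves (add an edge if point-incomparable and not maximal; otherwise remove a comparable vertex and recurse), terminating by lexicographic induction on $(|V|,\text{non-edges})$. Within your minimal-counterexample framework the clean fix is to choose $G$ lexicographically extremal: minimum $|V|$ among counterexamples, then maximum $|E|$. Point-incomparability follows as you argue; if $G$ were already maximal triangle-free it would not be a counterexample; otherwise the lemmas give a single edge $xy$ with $G+xy$ still in the class, and by edge-maximality $G+xy$ is not a counterexample, so its extension is an extension of $G$ --- contradiction.
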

\begin{proof}
    First, suppose that $G$ is not point-indistinguishable, i.e., there are 
    vertices $x,y\in V(G)$ such that $N(x)\subseteq N(y)$. In this case,
    we can remove $x$ from $G$, and from any $\{K_3,~K_1+2K_2,~K_1+C_5,~C_6\}$-free
    extension $(G-x)'$ of $G$, we construct a $\{K_3,~K_1+2K_2,~K_1+C_5,~C_6\}$-free
    extension of $G$ by adding $x$ as twin of $y$ with respect to $(G-x)'$. To be
    precise, $V(G') = V(G)$ and $E(G') = E((G-x)') \cup \{xu\colon   yu \in E((G-x)')\} $.
    It is straightforward to observe that if $(G-x)'$ is maximal triangle-free,
    then $G'$ maximal triangle-free, and it follows from Observation~\ref{obs:twins}
    that if $(G-x)'$ is $\{K_1+2K_2,~K_1+C_5,~C_6\}$-free, then  $G'$ is $\{K_1+2K_2,~K_1+C_5,~C_6\}$-free too. 

    Now suppose that $G$ is point-indistinguishable. If $G$ is not maximal
    $\{K_3,~K_1+2K_2,~K_1+C_5,~C_6\}$-free, then we add any edge $xy$ such that
    $G + xy$ is a $\{K_3,~K_1+2K_2,~K_1+C_5,~C_6\}$-free graph. Otherwise, 
    $G$ is maximal $\{K_3,~K_1+2K_2,~K_1+C_5,~C_6\}$-free, hence, by
    Lemmas~\ref{lem:.--} and~\ref{lem:C6}, we conclude that $G$ is maximal
    triangle-free. 

    Summarizing, given a $\{K_3,~K_1+2K_2,~K_1+C_5,~C_6\}$-free graph $G$
    we add edges until $G$ is not point-indistinguishable, or $G$ is 
    a maximal $\{K_3,~K_1+2K_2,~K_1+C_5,~C_6\}$-free graph. In the former case,
    we remove a vertex $u$ such that $N(u) \subseteq N(v)$ for some $v\neq u$,
    and inductively find the desired extension of $G$; in the latter case
    $G$ is a maximal triangle-free graph  that does not contain $K_1+2K_2,~K_1+C_5$,
    nor $C_6$ as induced  subgraphs.
\end{proof}

Building on the lemmas proved on this section, we extend Corollary~\ref{cor:circular<3} to 
include, and independently reprove, Brandt's original characterization of $\chi_c(G) < 3$
(Theorem~\ref{thm:brandt}).

\begin{theorem}\label{thm:equivalent-extensions}
    The following statements are equivalent for any finite graph $G$.
\begin{enumerate}
        \item $\chi_c(G) < 3$.
        \item $G\in \Csp(\mathbb C_3)$.
        \item $G$ can be extended to a $\{K_3,~K_1+2K_2,~K_1+C_5,~C_6\}$-free graph. 
        \item $G$ can be extended to a maximal triangle-free graph that
        is $\{K_1+2K_2,~K_1+C_5,~C_6\}$-free.
        \item $G$ can be extended to a maximal triangle-free graph that does not
        contain $H_{10}-v$ as a subgraph.
        \item $\overline{G}$ contains a spanning $3K_1$-free unit Helly circular-arc subgraph. 
\end{enumerate}
\end{theorem}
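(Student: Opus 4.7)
The plan is to build on Corollary~\ref{cor:circular<3}, which already gives (1)$\Leftrightarrow$(2)$\Leftrightarrow$(3), and close the list of equivalences by linking (4), (5), and (6) back to this core. Concretely, I would establish the chain (3)$\Leftrightarrow$(4)$\Leftrightarrow$(5) and, independently, (2)$\Leftrightarrow$(6).

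For the equivalence (3)$\Leftrightarrow$(4), the forward direction is exactly Lemma~\ref{lem:free->extension}, and the converse is immediate: any triangle-free graph that forbids $K_1+2K_2$, $K_1+C_5$, and $C_6$ is automatically $\{K_3,~K_1+2K_2,~K_1+C_5,~C_6\}$-free. For (5)$\Rightarrow$(3) one simply applies Lemma~\ref{lem:extension->free} to the extension guaranteed by (5), which then witnesses condition (3). The genuinely new content is (4)$\Rightarrow$(5): let $H$ be a maximal triangle-free $\{K_1+2K_2,~K_1+C_5,~C_6\}$-free extension of $G$, and suppose for contradiction that $H$ contains $H_{10}-v$ as a subgraph, witnessed by vertices $c_1,\dots,c_6,a_1,a_2,a_3$ with $c_1c_2\cdots c_6c_1$ a $6$-cycle and $a_ic_i, a_ic_{i+3}\in E(H)$ for $i\in\{1,2,3\}$. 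I claim this $6$-cycle is induced in $H$. The ``short'' chords $c_ic_{i\pm 2}$ (indices mod $6$) are ruled out by triangle-freeness, as they would form a triangle with the intermediate $c_{i\pm 1}$; the three ``long'' chords $c_ic_{i+3}$ are ruled out because each such pair already shares the common neighbour $a_i$, so a chord would yield the triangle $c_i a_i c_{i+3}$. Hence $c_1,\dots,c_6$ induce a $C_6$ in $H$, contradicting the hypothesis.

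For (2)$\Leftrightarrow$(6) I would pass through complements and invoke Theorem~\ref{thm:UCA}. If $H'$ is a spanning subgraph of $\overline{G}$, then $H'$ being a $3K_1$-free unit Helly circular-arc graph is, by Theorem~\ref{thm:UCA}, equivalent to $\overline{H'}$ being an induced subgraph of $\mathbb C_3$. Since $\overline{H'}$ is precisely a spanning supergraph of $G$, condition (6) amounts to: $G$ admits a spanning extension embedding into $\mathbb C_3$. The equivalence with (2) then follows from the observation already used to derive Corollary~\ref{cor:circular<3}, namely that every homomorphism from a finite graph to $\mathbb C_3$ can be taken injective, which in turn becomes an embedding of a suitable spanning extension of $G$.

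The main obstacle is really (4)$\Rightarrow$(5): one has to correctly classify the chords of the $6$-cycle inside a copy of $H_{10}-v$ and notice that the three vertices $a_1,a_2,a_3$ serve as immediate triangle-forcing witnesses that exclude the long chords, while triangle-freeness alone excludes the short chords. Everything else is routine bookkeeping or a direct invocation of already established lemmas and of Theorem~\ref{thm:UCA}.
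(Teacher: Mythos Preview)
Your proposal is correct and follows essentially the same route as the paper's proof: the paper also uses Corollary~\ref{cor:circular<3} for (1)$\Leftrightarrow$(2)$\Leftrightarrow$(3), Lemma~\ref{lem:free->extension} for (3)$\Rightarrow$(4), Lemma~\ref{lem:extension->free} for (5)$\Rightarrow$(3), and Theorem~\ref{thm:UCA} via complements for the equivalence with (6). Your (4)$\Rightarrow$(5) argument is exactly the content of the paper's one-line remark that any triangle-free graph containing $H_{10}-v$ as a subgraph must contain an induced $C_6$; you simply spell out the chord analysis that the paper leaves implicit.
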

\begin{proof}
    The equivalence between the first three statements holds as stated in Corollary~\ref{cor:circular<3}. 
    The third statement implies the fourth one via Lemma~\ref{lem:free->extension}.
    Clearly, if $G$ is a triangle-free graph and contains $H_{10}-v$ as a subgraph, then
    $G$ contains $C_6$ as an induced subgraph. Thus, if $G$ can be extended to a maximal
    triangle-free graph $G'$ that contains neither $K_1+2K_2$, $K_1+C_5$, nor $C_6$ as an induced
    subgraph, then $G'$ is a maximal triangle-free extension of $G$ that does not contain
    $H_{10}-v$ as an induced subgraph. 
    So the fourth statement implies the firth one. 
    The fifth one implies the third one via Lemma~\ref{lem:extension->free}, and so
    statements 1 -- 5 are equivalent. Finally, the equivalence between the third and
    sixth statement follows
    from Theorem~\ref{thm:UCA} by taking complements and noticing that 
    $\overline{K_3} \cong 3K_1$, $\overline{K_1+2K_2} \cong W_4$, and
    $\overline{K_1+C_5} \cong W_5$. 
\end{proof}

We conclude this section with a brief discussion about a natural generalization
of some of the equivalences in Theorem~\ref{thm:equivalent-extensions}, addressing Zhu's
comment regarding generalizations of Brandt's characterization of $\chi(G) < 3$ (Theorem~\ref{thm:brandt})
to larger integers $k$~\cite{zhuDM229}.

For a positive integer $k$ we say that a graph $G$ is a \textit{circular $K_k$-free graph}
if every vertex $x$ of $G$ can be represented by a point $p_x$ of a circle in such a way that
$xy \in E(G)$ if and only if $p_x$ and $p_y$ lie at an angle larger that $2\pi/k$. Similarly as in
the case $k = 3$, i.e., circular triangle-free graphs, every circular $K_k$-free graph is $K_k$-free.
It follows from the definition of the circular chromatic number via circular colourings (see,
e.g.,~\cite[Definition 1.1]{zhuDM229})  that a graph $G$ satisfies $\chi_c(G) < k$ if and only if it can be
extended to a circular $K_k$-free graph. Thus, if $\mathcal F_k$ is the set of minimal obstructions of
circular $K_k$-free graphs, then $\chi_c(G) < k$ if and only if $G$ can be extended to an $\mathcal F_k$-free
graph.

\begin{problem}
    Determine the minimal obstructions of circular $K_k$-free graphs. 
\end{problem}

\section{Axiomatization}
\label{sec:expansion}

In this section we propose a finite axiomatization of the first-order theory of $\mathbb C_3$. For this, it will
be convenient to expand our signature by a relation symbol $B$ of arity three and a relation symbol $S$ of arity four 
which are both existentially and universally definable in $\mathbb C_3$. Thus, any axiomatization
of $(\mathbb C_3, B, S)$ has a syntactic translation to an axiomatization of $\mathbb C_3$
(using only the signature of graphs). We will also show that $(\mathbb C_3, B, S)$ is a
homogeneous structure, and from this, we will see that Theorem~\ref{thm:ageC3} extends to countable graphs.
The fact that ${\mathbb C}_3$ has a universal and existential homogeneous expansion will also be used in
Section~\ref{sec:complexity}.

\subsection{Local betweenness and total separation}
To begin with, we define the ternary relation $B$ on $V(\mathbb C_3)$ using the
geometric construction of $\mathbb C_3$: 
for elements $x,y,z \in V(\mathbb C_3)$,  the relation $B(x,y,z)$ holds if $\{x,y,z\}$ is an
independent set in ${\mathbb C}_3$, and $y$ lies \textit{between} $x$ and $z$, i.e., $y$ lies
in the circular arc defined by the acute angle between $x$ and $z$. It is straightforward
to observe that $B(x,y,z)$ has the following universal and existential definitions in $\mathbb C_3$.

\begin{observation}\label{obs:B-definitions}
    The following  are equivalent for any three mutually non-adjacent vertices $x,y,z\in V(\mathbb C_3)$.
    \begin{itemize}
        \item $\mathbb C_3\models B(x,y,z)$. 
        \item $\mathbb C_3\models \forall v \big(E(y,v) \implies E(x,v) \lor E(z,v)\big)$.
        \item $\mathbb C_3\models \exists u, v \big(E(x,v) \land \lnot E(y,v) \land \lnot E(z,v)
        \land E(z,u) \land \lnot E(y,u) \land \lnot E(x,u)\big)$.
    \end{itemize}
\end{observation}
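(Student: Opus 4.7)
The plan is to translate each of the three statements into an elementary geometric condition on open arcs of length $1/3$ in $S^1$. Identifying $S^1$ with $\mathbb R/\mathbb Z$, the edge relation of $\mathbb C_3$ becomes $E(p,q) \iff d(p,q) > 1/3$, where $d$ is circular distance, and the open neighborhood of every vertex $w$ is the arc $N(w) = (w + 1/3, w + 2/3)$ of length $1/3$ centered at the antipode $w + 1/2$. The pairwise non-adjacency hypothesis translates to $d(x,y), d(y,z), d(x,z) \le 1/3$. After a rotation I take $y = 0$, and after possibly swapping $x$ and $z$ I write $x = -\alpha$ and $z = \beta$ with $\alpha, \beta \in (0, 1/3]$.

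For (1) $\Rightarrow$ (2), I would observe that $B(x,y,z)$ precisely says $y$ is on the shorter arc between $x$ and $z$, i.e., $d(x,z) = \alpha + \beta \le 1/3$. Then the three neighborhoods $N(x) = (1/3 - \alpha, 2/3 - \alpha)$, $N(y) = (1/3, 2/3)$, $N(z) = (1/3 + \beta, 2/3 + \beta)$ satisfy $N(x) \cup N(z) \supseteq N(y)$, since the consecutive overlap condition $2/3 - \alpha \ge 1/3 + \beta$ rewrites as $\alpha + \beta \le 1/3$. For (1) $\Rightarrow$ (3), the same parameterization exhibits open overhang arcs $(1/3 - \alpha, 1/3) \subseteq N(x) \setminus (N(y) \cup N(z))$ and $(2/3, 2/3 + \beta) \subseteq N(z) \setminus (N(y) \cup N(x))$ of positive length, so the density of $V(\mathbb C_3)$ in $S^1$ produces vertices serving as the required $v$ and $u$.

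The converses I would argue contrapositively. If $\{x,y,z\}$ is independent but $B(x,y,z)$ fails then $y$ lies on the longer arc between $x$ and $z$; translating this to the parameterization yields an explicit sub-arc of $N(y)$ disjoint from $N(x) \cup N(z)$, realised as a vertex by density, refuting (2); and shows that the overhang regions used in the (1) $\Rightarrow$ (3) argument shift or collapse so that no pair of existential witnesses can simultaneously exist, refuting (3).

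The main obstacle I foresee is the careful handling of the degenerate ``equilateral'' configuration in which $d(x,y) = d(y,z) = d(x,z) = 1/3$: there the three neighborhoods are pairwise disjoint and both overhangs remain non-empty even though $B(x,y,z)$ fails. Since the paper notes that any countable dense rational-angle subset of $S^1$ gives the same graph up to isomorphism (via Theorem~\ref{thm:ax}), the cleanest workaround is to pick an embedding of $V(\mathbb C_3)$ in which no three vertices form such an equilateral triple. Under this genericity assumption every equivalence reduces to routine interval arithmetic with no further conceptual content.
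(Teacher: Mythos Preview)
Your interval arithmetic is exactly the verification the paper has in mind when it calls this ``straightforward to observe'', and the non-degenerate part of your argument is correct: once you know one of the three points sits strictly between the other two on the short arc, the containments $N(y)\subseteq N(x)\cup N(z)$ and $N(x)\setminus(N(y)\cup N(z))\neq\varnothing\neq N(z)\setminus(N(y)\cup N(x))$ are immediate, and in the failure case the relevant overhang collapses because the ``middle'' point's neighbourhood is sandwiched.

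The one real issue is your treatment of the equilateral configuration. You are right that if three vertices sat at pairwise arc-distance exactly $1/3$ then item~(3) would hold while $B(x,y,z)$ fails, so something must rule this out. But your proposed workaround---invoke Theorem~\ref{thm:ax} to pass to a nicer embedding---is circular: Theorem~\ref{thm:ax} is proved via Observation~\ref{obs:ax}, which in turn appeals to Observation~\ref{obs:B-definitions} to verify axiom~$\UMIII$. You cannot cite the isomorphism of different models of $\mathbb C_3$ before this observation is established.

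The resolution is already built into the definition of $V(\mathbb C_3)$. The phrase ``any two distinct points lie at a rational angle'' is meant in radians (cf.\ the discussion around Lemma~\ref{lem:3k-1/3}, where angles are written as $2\pi i/(3k-1)$). Two vertices at arc-distance exactly $1/3$ would subtend an angle of $2\pi/3$, which is irrational, so no such pair---and a fortiori no equilateral triple---exists in $V(\mathbb C_3)$. With that noted, all pairwise distances among mutually non-adjacent vertices are strictly less than $1/3$, your parametrisation with $\alpha,\beta\in(0,1/3)$ and $\alpha+\beta<1/3$ applies without exception, and the degenerate case never arises. If you prefer not to rely on parsing ``rational angle'', you can alternatively just \emph{fix} at the outset a dense set of points at rational angles in radians (say $\{e^{iq}:q\in\mathbb Q\}$), which manifestly avoids distance~$1/3$; this requires no forward reference.
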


Now, we consider the standard \textit{separation} relation $S$ on $V(\mathbb C_3)\subseteq S_1$:
for $x_1,\dots,x_4 \in V({\mathbb C}_3)$ the relation 
$S(x_1,x_2,x_3,x_4)$ holds if and only if $x_2$ and $x_4$ lie in different (topologically) connected
components of $S_1\setminus\{x_1,x_3\}$; equivalently, the relation holds if when we traverse $S_1$ in a clockwise motion
starting in $x_1$ we see $x_2$, $x_3$, and then $x_4$, or we see $x_4$, $x_3$, and then $x_2$.

Notice that $S$ is a \textit{total} relation on $V(\mathbb C_3)$, while $B$ is not, i.e., $B$ is only defined on triples that induce an independent set (of three vertices),
while $S$ is defined on any four-tuple of (pair-wise distinct) vertices. 
For this reason, the proposed universal and existential definitions of $B(x_1,x_2,x_3)$  have no case
distinction on the graph induced by $x_1,x_2,x_3$. In contrast, the universal and existential
definitions of $S$ proposed below  depend on the graph induced by  $x_1,x_2,x_3,x_4$.
If Figure~\ref{fig:S-cases} we depict all possible configurations up to automorphisms of $\mathbb C_3$.


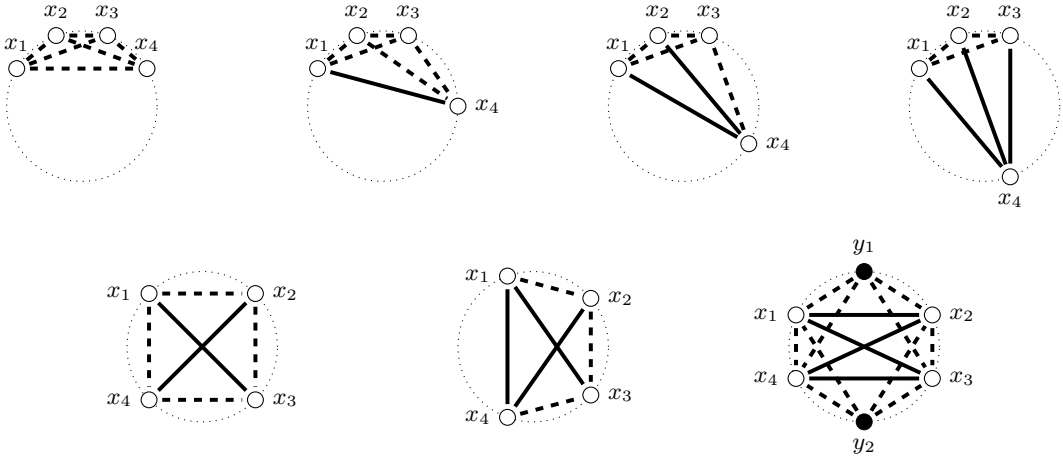
\begin{figure}[ht!]
\centering
\begin{tikzpicture}[scale = 0.8]


\begin{scope}[scale=0.5] 
\draw [black, dotted] (0,0) circle[radius = 2.5];
\node [vertex, label = above:{\small $x_1$}] (1) at (150:2.5){};
\node [vertex, label =above:{\small $x_2$}] (2) at (110:2.5){};
\node [vertex, label = above:{\small $x_3$}] (3) at (70:2.5){};
\node [vertex, label = above:{\small $x_4$}] (4) at (30:2.5){};

\foreach \from/\to in {1/2, 1/3, 1/4, 2/3, 2/4, 3/4}
\draw [edge, dashed] (\from) to (\to);

\end{scope}

\begin{scope}[xshift = 5cm,  scale=0.5] 
\draw [black, dotted] (0,0) circle[radius = 2.5];
\node [vertex, label = above:{\small $x_1$}] (1) at (150:2.5){};
\node [vertex, label =above:{\small $x_2$}] (2) at (110:2.5){};
\node [vertex, label = above:{\small $x_3$}] (3) at (70:2.5){};
\node [vertex, label = right:{\small $x_4$}] (4) at (0:2.5){};

\draw [edge] (1) to (4);
\foreach \from/\to in {1/2, 2/3, 1/3, 2/4, 3/4}
\draw [edge, dashed] (\from) to (\to);

\end{scope}

\begin{scope}[xshift = 10cm,  scale=0.5] 
\draw [black, dotted] (0,0) circle[radius = 2.5];
\node [vertex, label = above:{\small $x_1$}] (1) at (150:2.5){};
\node [vertex, label =above:{\small $x_2$}] (2) at (110:2.5){};
\node [vertex, label = above:{\small $x_3$}] (3) at (70:2.5){};
\node [vertex, label = right:{\small $x_4$}] (4) at (-30:2.5){};

\draw [edge] (2) to (4);
\draw [edge] (1) to (4);
\foreach \from/\to in {1/2, 2/3, 1/3, 3/4}
\draw [edge, dashed] (\from) to (\to);

\end{scope}

\begin{scope}[xshift = 15cm,  scale=0.5] 
\draw [black, dotted] (0,0) circle[radius = 2.5];
\node [vertex, label = above:{\small $x_1$}] (1) at (150:2.5){};
\node [vertex, label =above:{\small $x_2$}] (2) at (110:2.5){};
\node [vertex, label = above:{\small $x_3$}] (3) at (70:2.5){};
\node [vertex, label = below:{\small $x_4$}] (4) at (290:2.5){};

\foreach \from/\to in {4/1, 4/2, 4/3}
\draw [edge] (\from) to (\to);
\foreach \from/\to in {1/2, 1/3, 2/3}
\draw [edge, dashed] (\from) to (\to);

\end{scope}


\begin{scope}[yshift = -4cm, xshift = 2cm, scale=0.5] 
\draw [black, dotted] (0,0) circle[radius = 2.5];
\node [vertex, label = left:{\small $x_1$}] (1) at (135:2.5){};
\node [vertex, label =right:{\small $x_2$}] (2) at (45:2.5){};
\node [vertex, label = right:{\small $x_3$}] (3) at (-45:2.5){};
\node [vertex, label = left:{\small $x_4$}] (4) at (225:2.5){};

\draw [edge] (1) to (3);
\draw [edge] (2) to (4);
\foreach \from/\to in {1/2, 2/3, 4/3, 1/4}
\draw [edge, dashed] (\from) to (\to);

\end{scope}

\begin{scope}[yshift = -4cm, xshift = 7.5cm,  scale=0.5] 
\draw [black, dotted] (0,0) circle [radius = 2.5];
\node [vertex, label = left:{\small $x_1$}] (1) at (110:2.5){};
\node [vertex, label =right:{\small $x_2$}] (2) at (40:2.5){};
\node [vertex, label = right:{\small $x_3$}] (3) at (-40:2.5){};
\node [vertex, label = left:{\small $x_4$}] (4) at (250:2.5){};

\foreach \from/\to in {1/4, 1/3, 4/2}
\draw [edge] (\from) to (\to);
\foreach \from/\to in {1/2, 2/3, 4/3}
\draw [edge, dashed] (\from) to (\to);

\end{scope}

\begin{scope}[yshift = -4cm, xshift = 13cm,  scale=0.5] 
\draw [black, dotted] (0,0) circle[radius = 2.5];
\node [vertex, label = left:{\small $x_1$}] (1) at (155:2.5){};
\node [vertex, label =right:{\small $x_2$}] (2) at (25:2.5){};
\node [vertex, label = right:{\small $x_3$}] (3) at (-25:2.5){};
\node [vertex, label = left:{\small $x_4$}] (4) at (205:2.5){};
\node [vertex, fill = black, label = above:{\small $y_1$}] (y1) at (90:2.5){};
\node [vertex, fill = black, label = below:{\small $y_2$}] (y2) at (270:2.5){};

\foreach \from/\to in {1/2, 2/4, 1/3, 3/4}
\draw [edge] (\from) to (\to);
\foreach \from/\to in {1/4, 2/3, y1/1, y1/2, y1/3, y1/4, y2/1, y2/2, y2/3, y2/4}
\draw [edge, dashed] (\from) to (\to);

\end{scope}
\end{tikzpicture}

\caption{Depiction of the possible scenarios of the separation relation on $\mathbb C_3$, up to cyclic permutations
and reflections of $(x_1,x_2,x_3,x_4)$, i.e., compositions of $(x_1,x_2,x_3,x_4)\mapsto (x_2,x_3,x_4,x_1)$ 
and $(x_1,x_2,x_3,x_4)\mapsto (x_4,x_3,x_2,x_1)$. The vertices $y_1$ and $y_2$ in the bottom right picture represent the
existentially quantifies variable from the third item in Observation~\ref{obs:S-definitions}.}
\label{fig:S-cases}
\end{figure}

It is not hard to notice that $S$ has the cyclic symmetry  $S(x_1,x_2,x_3,x_4) \Leftrightarrow
S(x_2,x_3,x_4,x_1)$, and the reflection symmetry $S(x_1,x_2,x_3,x_4) \Leftrightarrow S(x_4,x_3,x_2,x_1)$.
In order to shorten our writing in the proposed existential and universal definitions of $S$, it will be
convenient to define $S$ up to cyclic permutations and reflections of $(x_1,x_2,x_3,x_4)$, i.e.,
composition of $(x_1,x_2,x_3,x_4)\mapsto (x_2,x_3,x_4,x_1)$  and $(x_1,x_2,x_3,x_4)\mapsto (x_4,x_3,x_2,x_1)$.
We will also group all configurations depicted in Figure~\ref{fig:S-cases} into three cases: $\{x_1,x_2,x_3,x_4\}$
contains an independent subset of three vertices; the tuple induces a $P_4$ or a $2K_2$; and the tuple induces a
$4$-cycle. These are indeed all possible configurations as any other graph on four vertices has a triangle (and
$\mathbb C_3$ is triangle-free).

\begin{observation}\label{obs:S-definitions}
    Two vertices $x_1,x_3\in V(\mathbb C_3)$ separate  $x_2,x_4\in V(\mathbb C_3)$,
    i.e., $S(x_1,x_2,x_3,x_4)$, if and only if one of the following holds (up to cyclic permutations
    and reflections of $(x_1,x_2,x_3,x_4)$):
    \begin{itemize}
        \item $B(x_1,x_2,x_3)$ and $\lnot B(x_1, x_4, x_3)$ (at least three vertices are mutually non-adjacent).
        \item $E(x_1,x_3)\land E(x_2, x_4)$ and $\lnot E(x_1,x_2)\land\lnot E(x_2,x_3)\land \lnot E(x_3,x_4)$
        (induced $P_4$ or $2K_2$).
        \item $E(x_1,x_3)\land E(x_3,x_4)\land E(x_4,x_2)\land E(x_2,x_1)$ (the tuple induces
        a $4$-cycle), and there are vertices $y_1,y_2$ such that
        $B(x_4,x_1,y_1)\land B(y_1,x_2,x_3)$ and $B(y_2,x_4,x_1)\land B(x_2,x_3,y_2)$, equivalently,
        \item $E(x_1,x_3)\land E(x_3,x_4)\land E(x_4,x_2)\land E(x_2,x_1)$ (the tuple induces
        a $4$-cycle), and for every $y$ such that $\lnot E(y,x_i)$
        with $i\in\{1,2,3,4\}$, then $B(x_4,x_1,y)\land B(y,x_2,x_3)$ or $B(y,x_4,x_1)\land B(x_2,x_3,y)$.
    \end{itemize}
\end{observation}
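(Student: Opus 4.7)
The plan is a case analysis on the induced subgraph of $\{x_1, x_2, x_3, x_4\}$ in $\mathbb C_3$. Since $\mathbb C_3$ is triangle-free, the four-vertex induced subgraphs are $4K_1$, $K_2 + 2K_1$, $P_3 + K_1$, $K_{1,3}$, $2K_2$, $P_4$, and $C_4$. Up to the symmetries allowed in the statement (cyclic permutations and reflections of $(x_1, x_2, x_3, x_4)$), the first four always contain three mutually non-adjacent vertices and fall into the first case (relabel so that this triple is $\{x_1,x_2,x_3\}$); $2K_2$ and $P_4$ fall into the second case (with $x_1 x_3$ and $x_2 x_4$ as the two disjoint edges); and $C_4$ yields the third case. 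For each I will prove both directions using only the arc-length description of adjacency in $\mathbb C_3$: two vertices are adjacent iff their geodesic distance on $S_1$ strictly exceeds $1/3$.

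For case 1, I would first establish the geometric lemma that three mutually non-adjacent vertices of $\mathbb C_3$ lie in a common closed arc of length at most $1/3$ (with the degenerate exception of the equidistant configuration, in which the three arcs they cut on $S_1$ all have length exactly $1/3$; this sub-case is absorbed by passing to a suitable cyclic shift of the tuple, which places three of the $x_i$'s into the generic sub-case). Within such a short arc the ternary relation $B$ coincides with the linear order, so $B(x_1, x_2, x_3)$ forces $x_2$ into the short sub-arc delimited by $\{x_1, x_3\}$, and $S(x_1, x_2, x_3, x_4)$ then reduces to $x_4$ lying outside that sub-arc, which is precisely $\lnot B(x_1, x_4, x_3)$. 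For case 2, the edges $x_1 x_3$ and $x_2 x_4$ together with the three non-edges force each of $x_2, x_4$ to lie within distance $1/3$ of both $x_1$ and $x_3$; a direct arc-length argument confines each to the union of the two short neighbourhoods of $\{x_1, x_3\}$, and the edge $x_2 x_4$ prevents them from lying in the same neighbourhood, yielding the separation.

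The $C_4$ case is the main obstacle, since the abstract graph admits two essentially different realisations on $S_1$---only one of which gives $S(x_1, x_2, x_3, x_4)$---so genuine additional witnesses are needed. In the separating realisation I construct $y_1$ and $y_2$ as rational points on the two short arcs of $S_1$ adjacent to $\{x_1, x_3\}$, on the sides not containing $x_2$ and $x_4$, respectively; the density of $V(\mathbb C_3)$ in $S_1$ guarantees such points exist, and the required betweenness conjunctions reduce to arc-length inequalities. Conversely, in the non-separating realisation any candidate $y_1$ satisfying $B(x_4, x_1, y_1)$ is confined to a short arc disjoint from the one containing $\{x_2, x_3\}$, so the second conjunct $B(y_1, x_2, x_3)$ fails. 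The equivalence between the existential and universal formulations then follows by showing that every vertex $y$ non-adjacent to all four $x_i$'s is forced by arc-length bookkeeping into one of the two short arcs in which $y_1$ or $y_2$ lives, and on each such arc the corresponding conjunction of $B$-relations holds automatically.
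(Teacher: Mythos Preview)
The paper does not give a proof of this observation at all; it is recorded with the remark that it (like Observation~\ref{obs:B-definitions}) follows from the geometric construction of $(\mathbb C_3,B,S)$, and it is illustrated by Figure~\ref{fig:S-cases}. Your case analysis on the induced subgraph is exactly the natural way to make this precise, and it matches the case split implicit in Figure~\ref{fig:S-cases}. The argument for each case is correct.

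Two small comments on the write-up. First, your handling of the equidistant sub-case in item~1 is slightly garbled: cyclically shifting the tuple $(x_1,x_2,x_3,x_4)$ does not change which three points are equidistant; what you mean (and what in fact works) is that when $\{x_1,x_2,x_3\}$ is equidistant one passes to a \emph{different} independent triple among the four vertices---such a triple exists and is never equidistant, as you can check by noting that a fourth point within distance $1/3$ of all three equidistant points would have to coincide with one of them. Second, in case~2 the phrase ``two short neighbourhoods of $\{x_1,x_3\}$'' is ambiguous; it would be clearer to say that the constraints on $x_2$ (and on $x_4$) force it into one of two short sub-arcs, one inside the short $x_1$--$x_3$ arc and one inside the long arc, and then the edge $x_2x_4$ forces $x_2$ and $x_4$ into distinct ones. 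With these clarifications your proof goes through.
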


\subsection{Universal axioms and implications}

We first consider a sequence of universal axioms satisfied by $(\mathbb C_3, B, S)$. We divide them
into edge axioms, betweenness axioms,  separation axioms, and mixed axioms. Whenever it is convenient,
we write the axioms in plain English. All variables are
implicitly universally quantified. \\

\noindent \textit{Universal edge axioms}

\vspace{0.1cm}
$\UEI$ $E$ is a loopless symmetric relation.

$\UEII$ $E$ contains no triangles.

\vspace{0.2cm}
\noindent \textit{Universal local betweenness axioms}

\vspace{0.1cm}
$\UBI$ $B(x,y,z)$ if and only if $B(z,y,x)$.

$\UBII$ If $B(x,y,z)$, then $\lnot B(y,z,x)$.

$\UBIII$ If $B(x,y,z)$ and $B(y,w,z)$, then $B(x,w,z)$ and $B(x,y,w)$.

$\UBIV$ If $B(x,y,z)$ and $B(x,y,w)$, then $B(x,z,w)$ or $B(x,w,z)$.

\vspace{0.2cm}
\noindent \textit{Universal separation axioms}

\vspace{0.1cm}
$\USI$ Exactly one of $S(x_1,x_2,x_3,x_4)$, $S(x_1,x_2,x_4,x_3)$, or $S(x_1,x_3,x_2,x_4)$ holds.

$\USII$ $S(x_1,x_2,x_3,x_4)$ if and only if $S(x_2,x_3,x_4,x_1)$.

$\USIII$ $S(x_1,x_2,x_3,x_4)$ if and only if $S(x_4,x_3,x_2,x_1)$.

$\USIV$ If $S(x_1,x_2,x_3,x_4)$ and $S(x_1,x_3,x_4,x_5)$, then $S(x_1,x_2,x_4,x_5)$.

\vspace{0.2cm}
\noindent \textit{Universal mixed axioms}

\vspace{0.1cm}
$\UMI$ $\{x,y,z\}$ is an independent set of size 3 if and only if $B(x,y,z)$, $B(y,z,x)$, 

\phantom{$\UMI$} or $B(z,x,y)$.

$\UMII$ If $B(x,y,z)$ and $B(y,z,w)$ and $\lnot E(x,w)$, then $B(x,y,w)$ and $B(x,z,w)$.

$\UMIII$ If $B(x,y,z)$ and $E(y,w)$, then $E(x,w)$ or $E(z,w)$.

$\UMIV$ If $B(x,y,z)$,  $E(x,w)$, and $E(z,w)$, then $E(y,w)$.

$\UMV$ If $S(x_1,x_2,x_3,x_4)$, $E(x_1,x_4)$, and $B(x_3,x_5,x_4)$, then $S(x_1,x_2,x_3,x_5)$.

$\UMVI$ If $B(y_1,y_2,x)$, $B(x,z_1,z_2)$, and $E(y_1,z_2)$, then $S(y_1,y_2,z_1,z_2)$.

$\UMVII$ If $x_1,x_2,x_3,x_4$ induce a $C_4$, a $P_4$ or a $2K_2$, then  $S(x_1,x_2,x_3,x_4)\Rightarrow E(x_1,x_3)\land E(x_2, x_4)$.

$\UMVIII$ If $\{x_1,x_2,x_3\}$ is an independent set of size 3  and $x_4 \notin \{x_1,x_2,x_3\}$, then

\phantom{$\UMVIII$} $S(x_1,x_2,x_3,x_4)$ if and only if $B(x_1,x_2,x_3) \land \lnot B(x_1,x_4,x_3)$ or 
$B(x_1,x_4,x_3)\land \lnot B(x_1,x_2, x_3)$.

\vspace{0.25cm}

\begin{observation}\label{obs:ax}
It follows from the geometric construction that $(\mathbb C_3, B,S)$ satisfies the universal edge axioms, 
local betweenness axioms, and separation axioms. 
Also the mixed axioms $\UMI$, $\UMII$,  $\UMIV$, and $\UMV$ follow from the geometric
construction of $(\mathbb C_3, B,S)$, while the rest of the mixed axioms follow from Observations~\ref{obs:B-definitions}
and~\ref{obs:S-definitions} --- each of these observations are also consequences of the geometric
construction of $(\mathbb C_3, B, S)$.
\end{observation}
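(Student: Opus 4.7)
The plan is to verify each of the listed universal axioms separately using the geometric construction of $(\mathbb C_3, B, S)$. Throughout I would use only three primitive facts: that $E(x,y)$ holds iff both circular arcs between $x$ and $y$ have length strictly greater than $1/3$; that $B(x,y,z)$ holds iff $\{x,y,z\}$ is independent and $y$ lies on the shorter circular arc between $x$ and $z$; and that $S(x_1,x_2,x_3,x_4)$ is the standard cyclic separation relation on $S^1$. I would organize the verification into four blocks matching the four headings in the axiom list.

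The edge axioms and separation axioms should be essentially immediate. For $\UEII$, three pairwise-adjacent vertices would force three arcs of length $>1/3$ to partition the circle, contradicting that its total length is $1$. The axioms $\USI$--$\USIV$ are classical properties of the separation relation on four distinct points of a circle: $\USI$ is trichotomy of the three pairings, $\USII$ and $\USIII$ are its cyclic and reflective symmetries, and $\USIV$ is its transitivity. The local betweenness axioms $\UBI$--$\UBIV$ reduce, once we restrict attention to the short arc supporting the independent triple (or quadruple), to the usual betweenness axioms for a linear order: symmetry, asymmetry, transitivity, and the linearity condition that three points on the same arc starting from $x$ must be pairwise comparable.

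The mixed axioms split into two groups. First, $\UMI$, $\UMII$, $\UMIV$, and $\UMV$ are direct geometric verifications: $\UMI$ uses that an independent triple lies on a single short arc, so exactly one of its vertices is the middle one; $\UMII$ extends this monotonicity to four independent points; $\UMIV$ uses that if $w$ is adjacent to both $x$ and $z$ then $w$ lies in the long arc opposite the short $xz$-arc, hence at circular distance $>1/3$ from any point $y$ of the short arc; and $\UMV$ records the compatibility of $B$ and $S$ when one of the four points is perturbed along an arc without crossing the other three. The remaining axioms $\UMIII$, $\UMVI$, $\UMVII$, and $\UMVIII$ are consequences of Observations~\ref{obs:B-definitions} and~\ref{obs:S-definitions}: $\UMIII$ is the forward implication of the universal characterization of $B$; $\UMVII$ and $\UMVIII$ are the forward implications of the two relevant cases in the characterization of $S$; and $\UMVI$ combines both, using that the hypothesis $B(y_1,y_2,x)$, $B(x,z_1,z_2)$, $E(y_1,z_2)$ forces the four points $y_1,y_2,z_1,z_2$ into a cyclic arrangement witnessing the separation.

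The only real obstacle is bookkeeping rather than insight: each axiom reduces to identifying the correct case analysis over cyclic orderings of three or four points on the circle and tracking which inter-point arcs can have length in $[0,1/3]$, $(1/3,2/3)$, or $[2/3,1]$. The statement that the axioms ``follow from the geometric construction'' is accurate, and a fully airtight proof amounts to drawing the pictures underlying each such case and reading off the conclusion; since the observation is only invoked later to establish the soundness of the axiomatization, I would present this verification at the level of one or two representative cases per axiom rather than writing out every sub-case.
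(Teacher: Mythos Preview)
Your proposal is correct and matches the paper's approach: the paper states this observation without a proof block, simply asserting that the axioms follow from the geometric construction and from Observations~\ref{obs:B-definitions} and~\ref{obs:S-definitions}, and your sketch is a faithful elaboration of exactly that claim, with the same two-way split of the mixed axioms into those verified directly ($\UMI$, $\UMII$, $\UMIV$, $\UMV$) and those read off from the cited observations ($\UMIII$, $\UMVI$, $\UMVII$, $\UMVIII$).
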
 

Any relation $S$ of arity four  satisfying the universal separation axioms listed above is called a
\textit{separation} relation. Given a separation relation $S$ on a set $V$, we say that 
$u,v\in V$ \textit{separate} $u',v'$ if $S(u,u',v,v')$.
Recall that $V(\mathbb C_3)$ is a generic dense
countable subset of $S_1$. 
Any countable $\{S\}$-structure $(V,S)$ such that $S$ is a separation
relation on $V$ embeds into $(V(\mathbb C_3), S)$, and moreover $(V(\mathbb C_3), S)$ is a homogeneous
structure (see, e.g., \cite{Oligo}).


A ternary relation $B$ satisfying the universal betweenness axioms and totality
(for any three distinct vertices $x_1,x_2,x_3$ either $B(x_1,x_2,x_3)$, $B(x_2,x_3,x_1)$ or
$B(x_3,x_2,x_1)$) is called a \textit{betweenness} relation. In particular, by axiom $\UMI$, 
for any $\{E,B,S\}$-structure $\bA$ with $E(\bA)$ empty, the interpretation of $B$ in $\bA$ defines a
betweenness relation.

Betweenness relations arise from considering the ternary relation
$$\Bet := \{(x,y,z) \in {\mathbb Q}^3 \mid x<y<z \text{ or } z < y < x\}$$ 
where $<$ is the natural
strict linear order on ${\mathbb Q}$. As one might expect,  $(\mathbb Q, \Bet)$ is the universal
homogeneous countable betweenness relation, up to isomorphism (see, e.g.,~\cite{Oligo}).
Clearly, if $U\subseteq V(\mathbb C_3)$ is a circular arc of length strictly less than $1/3$,
then $U$ is an independent set (with respect to $E$) and so, $(U,B)$ is a countable betweenness
relation, and thus, it embeds into $(\mathbb Q, \Bet)$. Actually, it is straightforward to observe that
$(U,B) \cong (\mathbb Q, \Bet)$.

\begin{lemma}\label{lem:unique-bet}
Let $V$ be a countable set with a betweenness relation $B$, and let $u,w\in V$ be distinct 
with no element in $V$ between $u$ and $w$. For any new element $v\not\in V$,
there is a unique betweenness relation $B'$ on $V\cup \{v\}$ such that $B\subseteq B'$,
and $B'(u,v,w)$. Similarly, if $x,y\in V$ are distinct and every element of $V\setminus\{x,y\}$ is
between $x$ and $y$, then there is a unique betweenness relation $B'$ on $V\cup \{v\}$ such that
$B\subseteq B'$  and $B'(v,x,y)$.
\end{lemma}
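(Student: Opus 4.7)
The plan is to use the standard correspondence between total betweenness relations and linear orders modulo reversal: a total betweenness relation $B$ on a set $V$ (satisfying $\UBI$--$\UBIV$ together with totality) is the same data as an unordered pair $\{<, >\}$ of mutually reverse linear orders on $V$, via $B(a,b,c) \iff (a<b<c) \vee (c<b<a)$. Given $B$, one recovers a representative $<$ by choosing any two elements $x_0, y_0 \in V$ and declaring $x_0 < y_0$; then every other $z\in V$ is placed by exactly one of $B(z,x_0,y_0)$, $B(x_0,z,y_0)$, or $B(x_0,y_0,z)$ (these are mutually exclusive and exhaustive by totality and $\UBII$), and axioms $\UBIII$--$\UBIV$ guarantee that the resulting relation is transitive and therefore a linear order. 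I would either invoke this classical fact (see the references to homogeneous structures in the paper, e.g., \cite{Oligo}) or prove it briefly at the start.

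Once this dictionary is in hand, both statements reduce to combinatorial remarks about linear orders. For the first part, fix a representative linear order $<$ corresponding to $B$. The hypothesis that no element of $V$ lies between $u$ and $w$ under $B$ means that $u$ and $w$ are consecutive in $<$; without loss of generality $u < w$. Now any $B' \supseteq B$ on $V\cup\{v\}$ with $B'(u,v,w)$ corresponds to a linear order $<'$ on $V\cup\{v\}$ whose restriction to $V$ is $<$ or its reverse, and in which $v$ lies strictly between $u$ and $w$. Since $u$ and $w$ are consecutive, the only possible placement of $v$ is in the unique gap between them; up to overall reversal this determines $<'$ uniquely, hence $B'$ is unique. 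Existence is obtained by simply inserting $v$ into this gap and letting $B'$ be the betweenness relation of the resulting order — the betweenness axioms hold automatically for any linear order, and $B \subseteq B'$ by construction.

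For the second part, the hypothesis that every $z \in V\setminus\{x,y\}$ is between $x$ and $y$ means $x$ and $y$ are the extremes of $<$; without loss of generality $x$ is the minimum and $y$ the maximum. The condition $B'(v,x,y)$ says $x$ lies between $v$ and $y$, forcing $v <' x <' y$ (or its reverse). Since $x$ is already the minimum of $<$, the only placement is to declare $v$ the new minimum; up to reversal this gives a unique $<'$, hence a unique $B'$, and existence follows as before. The main potential obstacle is the clean statement of the linear-order correspondence, since the betweenness axioms $\UBI$--$\UBIV$ are not phrased in that language; but the verification from the axioms is routine, and once done, both halves of the lemma are immediate consequences of the elementary fact that inserting a point into a prescribed gap of a linear order is a unique operation.
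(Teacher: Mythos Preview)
Your proposal is correct and takes essentially the same approach as the paper: both arguments rest on the correspondence between total betweenness relations and linear orders up to reversal, after which the claims reduce to the trivial observation that inserting a point into a specified gap (or at a specified end) of a linear order is unique. The paper phrases this as embedding $(V,B)$ into $(\mathbb Q,\Bet)$ and choosing an appropriate rational for $v$, whereas you work directly with a representative linear order, but the content is the same and your version is in fact more detailed than the paper's one-line sketch.
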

\begin{proof}
    This lemma can be proved by considering the unique embedding of $(V,B)$ into $(\mathbb Q,\Bet)$, up
    to isomorphism, and then appropriately identifying $v$ with some element of $\mathbb Q$. 
\end{proof}

As seen above, the universal separation relation and local betweenness axioms already have some
structural implications regarding $\{E,B,S\}$-structures satisfying these axioms.  In particular, 
these relate to well-known and previously studied relations. Now, we observe that the
universal mixed axioms force some strong interplay between the relations $E$, $B$, and $S$.
To begin with, we highlight the following immediate implication of axioms the $\UMI-\UMVIII$.

\begin{observation}\label{obs:sep-indeptendent}
Let $\bA$ be a finite $\{E,B,S\}$-structure satisfying the edge, betweenness, separation and mixed universal axioms,
and $a\in V(\bA)$ such that $V(\bA)\setminus\{a\}$ is an independent set (with respect to $E$). If $S'$ is a separation
relation on $V(\bA)$ such that $(V(\bA),B,S')$ satisfies the universal axioms, then $S' = S$. 
\end{observation}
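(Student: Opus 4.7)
The key point is that axiom $\UMVIII$ is a \emph{definition} of $S(x_1,x_2,x_3,x_4)$ in terms of $B$ alone, on any 4-tuple whose first three coordinates form an independent triple; and the hypothesis that $V(\bA)\setminus\{a\}$ is independent lets us always reduce to that situation after a cyclic rotation. So my plan is to show that the axioms force both $S$ and $S'$ to coincide with one and the same Boolean formula in $B$.

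Concretely, I would fix an arbitrary 4-tuple $(x_1,x_2,x_3,x_4)$ of distinct elements of $V(\bA)$ and first observe that, because $V(\bA)\setminus\{a\}$ is independent (with respect to $E$), at most one coordinate of the tuple can equal $a$. If $a$ does occur among the $x_i$, I use the cyclic symmetry $\USII$, which both $S$ and $S'$ satisfy by hypothesis, to rotate the tuple so that $a$ sits in the fourth position; if $a$ does not occur at all, no rotation is needed. In either case, the first three coordinates of the (possibly rotated) tuple lie in $V(\bA)\setminus\{a\}$, so they are pairwise non-adjacent and pairwise distinct, i.e.\ $\{x_1,x_2,x_3\}$ is an independent set of size three.

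At this point I would invoke $\UMVIII$ in the two structures $(V(\bA),B,S)$ and $(V(\bA),B,S')$. Since $E$ and $B$ are common to both structures and the right-hand side of the biconditional in $\UMVIII$ mentions only $B$ (it is essentially $B(x_1,x_2,x_3)\oplus B(x_1,x_4,x_3)$), the truth values of $S(x_1,x_2,x_3,x_4)$ and $S'(x_1,x_2,x_3,x_4)$ on the rotated tuple are forced to coincide. Undoing the cyclic rotation via $\USII$ on both sides then yields $S(x_1,x_2,x_3,x_4)\Leftrightarrow S'(x_1,x_2,x_3,x_4)$ on the original tuple, and since the tuple was arbitrary, $S=S'$.

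There is no real obstacle here beyond the bookkeeping of the cyclic permutation: axiom $\UMVIII$ already packages the geometric content (reading off the separation of $x_2$ and $x_4$ by $x_1,x_3$ from which of $x_2,x_4$ lies on the ``short arc'' determined by the betweenness relation) into a direct biconditional. The independence hypothesis on $V(\bA)\setminus\{a\}$ is exactly what is needed to guarantee that, up to one cyclic rotation, the antecedent of $\UMVIII$ is always satisfied, so that no other mixed axiom about $S$ ever has to be called upon.
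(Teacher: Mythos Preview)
Your proposal is correct and matches the paper's own proof essentially line for line. The paper argues in the same way: among any four distinct vertices there is an independent triple (since at most one of them can be $a$), and then axiom $\UMVIII$ together with the cyclic symmetry $\USII$ forces the value of $S$ on that tuple to be determined by $B$ alone; since $B$ is shared by both structures, $S$ and $S'$ agree.
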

\begin{proof}
    For any set $\{x_1,x_2,x_3,x_4\}\subseteq V$ of four vertices there is an independent subset
    of three vertices, and so axiom $\UMI$ implies that $B(x_i,x_j,x_k)$ for some $i,j,k\in \{1,\dots, 4\}$. 
    We then conclude by axiom $\UMVIII$ (and the universal separation and local betweenness axioms) that
    $S(x_1,x_2,x_3,x_4)$ if and only if
    $S'(x_1,x_2,x_3,x_4)$.
\end{proof}

Given a subset of vertices $U$ in  an $\{E,B,S\}$-structure $\mathbb A$, we write $B(U)$ to denote the set
$$U\cup \{v\in V(\mathbb A)\colon B(u_1, v, u_2),  \text{ for some } u_1,u_2\in U\}.$$

\begin{lemma}\label{lem:order}
    Let $\bA$ be a countable $\{E,B,S\}$-structure  satisfying the universal axioms $\UBI$, $\UBII$, $\UBIII$, $\UMI$,
    $\UMII$, $\UMIII$, and $\UMIV$, let $a,b \in V(\bA)$ be non-adjacent, and let 
    $U = B(\{a,b\})$. Then $U$ is an independent set, and
    $$<_{ab} \; := \big \{(c,d) \in S \mid B(a,c,d) \wedge B(c,d,b) \big \} \cup \big \{(a,c) \mid c \in U\setminus \{a\} \big \} \cup \big \{(c,b) \mid c \in U\setminus \{b\} \big \}$$
    defines a (strict) linear order on $U$.
\end{lemma}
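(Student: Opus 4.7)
My plan is to establish the two claims of the lemma in sequence: first that $U$ is $E$-independent, then that the three-piece definition of $<_{ab}$ assembles into a strict linear order on $U$. The backbone is that, once $U$ is shown to be independent, the restriction of $B$ to $U^3$ behaves like an order-betweenness relation with $a$ and $b$ as designated endpoints, and the mixed axioms $\UMI$--$\UMIII$ provide exactly the extra coupling between $B$ and $E$ needed at the boundary.

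For independence: by $\UBI$, $U = \{a,b\} \cup \{v \mid B(a,v,b)\}$. The hypothesis gives $\lnot E(a,b)$, and $\UMI$ applied to each $B(a,v,b)$ yields independence of $\{a,v,b\}$. Thus only a potential edge $E(c,d)$ between two inner points $c,d \in U \setminus \{a,b\}$ could threaten independence; but then $\UMIII$ applied to $B(a,c,b)$ together with $E(c,d)$ would force $E(a,d) \lor E(b,d)$, contradicting independence of $\{a,d,b\}$.

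For the order, irreflexivity follows trivially from the distinctness of the arguments in $B$ (via $\UMI$) together with the form of the endpoint clauses. Totality is the main step: for distinct $c,d \in U \setminus \{a,b\}$, the independence of $\{a,c,d\}$ and $\UMI$ produce three cases, $B(a,c,d)$, $B(a,d,c)$, or $B(c,a,d)$. The critical move is ruling out $B(c,a,d)$: applying $\UMII$ to $B(c,a,d)$ and $B(a,d,b)$ (using $\lnot E(c,b)$ obtained in the previous step) would yield $B(c,a,b)$, which directly contradicts $B(a,c,b)$ via $\UBI$ and $\UBII$. In the case $B(a,c,d)$, applying $\UBIII$ to the $\UBI$-symmetric forms $B(b,d,a)$ and $B(d,c,a)$ yields $B(b,d,c)$, i.e., $B(c,d,b)$, so $c <_{ab} d$; the case $B(a,d,c)$ is symmetric.

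Transitivity then follows from the combined use of $\UBIII$ and $\UMII$ applied to the first-clause witnesses; the substantial subcase $c,d,e \in U\setminus\{a,b\}$ chains $B(a,c,d), B(c,d,b), B(a,d,e), B(d,e,b)$ into $B(a,c,e) \land B(c,e,b)$ by first using $\UBIII$ on the two $B(\cdot,\cdot,b)$-facts to extract $B(c,d,e)$ and $B(c,e,b)$, then applying $\UMII$ twice (the second time using $\lnot E(a,e)$, which follows from $B(a,e,b)$ and $\UMI$). The endpoint subcases are handled by extracting the required $B(a,-,b)$-memberships from first-clause witnesses via $\UMII$ and $\lnot E(a,b)$. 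The main obstacle is the single case $B(c,a,d)$ in totality: it is the only place where a concrete interplay of $B$ with $E$, rather than the pure betweenness axioms alone, is essential, and $\UMII$ is exactly the tool tailored for it.
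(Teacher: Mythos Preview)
Your proof is correct and takes a genuinely different route from the paper's.

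The paper argues as follows: once $U$ is shown to be independent (using $\UMI$ and $\UMIII$, exactly as you do), axiom $\UMI$ makes the restriction of $B$ to $U$ a \emph{total} betweenness relation, which therefore embeds into $(\mathbb Q,\Bet)$; the linear order $<_{ab}$ is then read off from this embedding, with $a$ as minimum and $b$ as maximum. In other words, the paper outsources the order-theoretic verification to the known universality of $(\mathbb Q,\Bet)$ for countable betweenness structures.

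Your approach is more elementary and self-contained: you verify irreflexivity, comparability, and transitivity directly from $\UBI$--$\UBIII$ and $\UMI$--$\UMIII$. The key step---ruling out $B(c,a,d)$ for two inner points $c,d$---is handled cleanly via $\UMII$, and this is the one place where the coupling between $B$ and $E$ genuinely matters (the paper hides this inside the appeal to totality plus the embedding). Your argument has the advantage that it makes explicit exactly which axioms are used and does not rely on any outside fact about $(\mathbb Q,\Bet)$; the paper's argument is shorter and more conceptual but leaves the axiom bookkeeping implicit. Two minor points: only one application of $\UMII$ is actually needed in your main transitivity chain (from $B(a,c,d)$ and $B(c,d,e)$ with $\lnot E(a,e)$), not two; and the endpoint transitivity cases are in fact simpler than you suggest---once one observes that nothing is below $a$ or above $b$ (assuming $a\neq b$), they are immediate from the second and third clauses without needing to extract $B(a,-,b)$-memberships.
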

\begin{proof}
    It follows from $\UMI$ that the restriction of $B$ to any independent set is a betweenness relation, and thus
    it embeds into $(\mathbb Q, \Bet)$. So if $B(\{a,b\})$ is an independent set, then it follows that 
    $<_{ab}$ is a linear order of $B(\{a,b\})$ by considering the unique embedding of $(B(\{a,b\},B)$ into
    $(\mathbb Q,\Bet)$ (up to isomorphisms of $(\mathbb Q,\Bet)$. We now prove that $B(\{a,b\})$ is an independent
    set. By axiom  $\UMI$, and $\lnot E(a,b)$, it follows that $a$ and $b$ are not adjacent to any vertex in
    $B(\{a,b\})$. Now, let $c$ be such that $B(a,c,b)$ and let $d$ be a neighbour of $c$. By axiom $\UMIII$, $c$
    is a neighbour of $a$ or $b$, and thus, $\lnot B(a,d,b)$ (by axiom $\UMI$ again). Therefore, if $c,d \in V$ are such that $B(a,c,b)$ and $B(a,d,b)$,
    then they cannot be adjacent, so $B(\{a,b\})$ is an independent set.
\end{proof}

\begin{corollary}\label{cor:order-indep}
    Let $\bA$ be an $\{E,B,S\}$-structure  satisfying the universal axioms $\UBI$, $\UBII$, $\UBIII$, $\UMI$,
    $\UMII$, $\UMIII$, and $\UMIV$.
    Then every finite non-empty $U \subseteq V$ which forms an independent set with respect to $E$ 
    contains a unique pair $\{s,t\} \subseteq U$  
    such that $U \subseteq B(\{s,t\})$ and $<_{st}$ and $<_{ts}$ define a linear order on $U$.
\end{corollary}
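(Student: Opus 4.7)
My plan is to extract a linear order on $U$ from the betweenness data, use its two extremes as $\{s,t\}$, and read off both existence and uniqueness from this order.

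First, I would observe that the restriction of $B$ to triples from the $E$-independent set $U$ is a total betweenness relation on $U$. Indeed, for any three distinct $x,y,z \in U$, the set $\{x,y,z\}$ is independent, so $\UMI$ forces at least one of $B(x,y,z)$, $B(y,z,x)$, $B(z,x,y)$ to hold, and $\UBI$ together with $\UBII$ ensures that exactly one does (modulo the reflection symmetry $B(x,y,z) \iff B(z,y,x)$). Combined with $\UBIII$, this restricted relation satisfies the betweenness axioms and totality, and hence (as discussed after Lemma~\ref{lem:unique-bet}, using that $(\mathbb Q, \Bet)$ is the universal countable betweenness relation) there is a linear order $\prec$ on the finite set $U$, unique up to reversal, such that $B(x,y,z) \iff x \prec y \prec z \lor z \prec y \prec x$.

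Second, I would take $s$ and $t$ to be the $\prec$-minimum and $\prec$-maximum of $U$. For each $u \in U\setminus\{s,t\}$ we have $s \prec u \prec t$, hence $B(s,u,t)$, so $u \in B(\{s,t\})$; since trivially $s,t \in B(\{s,t\})$, we conclude $U \subseteq B(\{s,t\})$. Because $s,t \in U$ are non-adjacent, Lemma~\ref{lem:order} applies and gives that $<_{st}$ is a linear order on $B(\{s,t\})$, whose restriction to $U$ is a linear order on $U$; the argument for $<_{ts}$ is symmetric. For uniqueness, suppose $\{s',t'\} \subseteq U$ also satisfies $U \subseteq B(\{s',t'\})$, and (relabelling if necessary) assume $s' \prec t'$. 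Then for every $u \in U \setminus \{s',t'\}$ the assumption $u \in B(\{s',t'\})$ means $B(s',u,t')$, i.e., $s' \prec u \prec t'$. Thus no element of $U$ is $\prec$-strictly outside $[s',t']$, which forces $s' = s$ and $t' = t$.

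The main obstacle is the very first step: justifying that the betweenness data on $U$ corresponds to a linear order. If one treats this as a standard fact about finite betweenness relations, the remainder of the argument is bookkeeping; otherwise one can write a short induction on $|U|$, inserting one element at a time and using $\UBIII$ to propagate consistency. It is also worth double-checking that axiom $\UBIV$ (which is \emph{not} among the hypotheses of the corollary) is genuinely not needed — but since totality already follows from $\UMI$ on independent triples, axioms $\UBI$--$\UBIII$ should suffice to pin down the order on the finite independent set $U$.
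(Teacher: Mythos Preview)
Your proof is correct and follows essentially the same route as the paper's: embed the finite independent set $(U,B)$ into $(\mathbb Q,\Bet)$, take $s,t$ to be the extremes of the resulting linear order, and invoke Lemma~\ref{lem:order}. Your uniqueness argument is spelled out more explicitly than the paper's (which simply appeals to uniqueness of the embedding up to isomorphisms of $(\mathbb Q,\Bet)$), and your closing remark about $\UBIV$ is a fair observation---the paper's own proof tacitly relies on the same identification with $(\mathbb Q,\Bet)$.
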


\begin{proof}
    The statement follows by considering  $(U,B)$ as an induced substructure of $(\mathbb Q, \Bet)$,
    and choosing $s$ and $t$, the minimum and maximum of $U$ with respect to the linear order of $\mathbb Q$.
    The fact that $<_{st}$ defined a linear order on $U$ follows from Lemma~\ref{lem:order}, and
    the uniqueness of $\{s,t\}$ follows from the fact that there is a unique embedding of $(U,B)$ into
    $(\mathbb Q, \Bet)$, up to isomorphisms of $(\mathbb Q,\Bet)$.
\end{proof}

Given a finite independent set $U$ (or a finite $\{E,B,S\}$-structure $\bA$ with empty interpretation of $E$), 
we call the pair $\{s,t\}$ given in Corollary~\ref{cor:order-indep} the \textit{pair of $B$-bounds} of
$U$ (resp.\ of $\bA$); note that $s$ might be equal to $t$.

\begin{lemma}\label{lem:V-independent}
    Let $\bA$ and $\bB$ be $\{E,B,S\}$-structures satisfying the universal axioms.  Let $\bA'$ be a finite
    substructure of $\bA$ and $v\in V(\bA)\setminus V(\bA')$. If $V(\bA')$ is an independent set (with respect
    to $E$) and $u,w\in V(\bA')$ are such that $\{u,w\}$ is a pair of $B$-bounds of $\bA'$,
    then (exactly) one of the following holds for some $a,b\in V(\bA')$ with $a <_{uw} b$ and such that
    $\lnot B(a,c,b)$ for every $c\in V(\bA')$.
        \begin{enumerate}
            \item $E(u,v)$ and $E(w,v)$.
            \item $B(v,u,w)$, $B(u,w,v)$, or $B(a,v,b)$.
            \item $B(b,w,v)$, $E(a,v)$, and $E(u,v)$.
            \item $B(v,u,a)$, $E(b,v)$, and $E(w,v)$.
            \end{enumerate}
    Moreover,  in each of the cases above, if $f\colon \bA'\to \bB$ is an embedding and there is
    $v'\in V(\bB)\setminus f[V(\bA')]$ such that $\bB$ models the same atomic formulas substituting
    $v$ by $v'$ (and  $u,w,a,b$ for $f(u), f(w), f(a), f(b)$, respectively), then
    the extension of $f$  mapping $v$ to $v'$ is an embedding of the substructure of $\bA$ with vertex set
    $V(\bA')\cup\{v\}$ into $\bB$.
\end{lemma}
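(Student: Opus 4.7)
The strategy is to first establish a structure theorem for how $v$ relates to $V(\bA')$, then perform a case split on the adjacencies $E(v,u), E(v,w)$ to derive one of the four configurations, and finally transfer the complete atomic type of $v$ over $V(\bA')$ from the atomic type over $\{u,w,a,b\}$ using only the universal axioms (so that the same derivations hold in $\bB$).

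First I would show that the neighbours and non-neighbours of $v$ in $V(\bA')$ both form convex subsets of $(V(\bA'), <_{uw})$. For the non-neighbours, if $x <_{uw} y <_{uw} z$ and both $x,z$ are non-neighbours of $v$, then $B(x,y,z)$ (by compatibility of $<_{uw}$ with $B$ on the independent set $V(\bA')$, which follows from Corollary~\ref{cor:order-indep}); axiom $\UMIII$ then forces $y$ to be a non-neighbour as well. Symmetrically, $\UMIV$ handles the neighbours. Hence the partition into neighbours and non-neighbours is a single cut of the linear order.

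Next I split on the edges $E(v,u), E(v,w)$. If neither holds, applying $\UMIII$ to each $B(u,x,w)$ shows that every $x\in V(\bA')$ is a non-neighbour, so $V(\bA')\cup\{v\}$ is independent and $\UMI$ places $v$ either before $u$ (so $B(v,u,w)$), after $w$ (so $B(u,w,v)$), or in the unique gap between some consecutive pair $a <_{uw} b$ of $V(\bA')$ (so $B(a,v,b)$); this is Case 2. If both edges hold, $\UMIV$ applied to each $B(u,x,w)$ forces every $x\in V(\bA')$ to be adjacent to $v$; this is Case 1. If exactly one edge holds, say $E(v,u)$ and $\lnot E(v,w)$ (the other alternative yields Case 4 by symmetry), the cut from Step 1 produces a maximal neighbour $a$ and minimal non-neighbour $b$ with $\lnot B(a,c,b)$ for every $c\in V(\bA')$, and I still need $B(b,w,v)$. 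Since $\{v,b,w\}$ is independent, $\UMI$ gives one of $B(b,w,v)$, $B(b,v,w)$, or $B(v,b,w)$. The middle option is ruled out by $\UMIII$ applied with $E(v,u)$, which would require $E(b,u)$ or $E(w,u)$, both impossible in the independent set $V(\bA')$. The third is ruled out by applying $\UBIV$ to $B(w,b,a)$ (from $B(a,b,w)$ via $\UBI$) and $B(w,b,v)$ (from the supposition via $\UBI$), which yields $B(w,a,v)$ or $B(w,v,a)$; each of these forces $\{a,v\}$ to be non-adjacent by $\UMI$, contradicting $E(v,a)$. Hence $B(b,w,v)$, establishing Case 3; mutual exclusivity of the four cases is immediate from their different adjacency patterns at $u$ and $w$.

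For the embedding-extension clause I would apply Part 1 inside $\bB$ to the substructure induced on $f[V(\bA')]$, whose pair of $B$-bounds is $\{f(u), f(w)\}$ since $f$ preserves $B$. The hypothesis on $v'$ supplies exactly the atomic formulas among $v',f(u),f(w),f(a),f(b)$ that determine which of the four cases $v'$ lies in, so $v'$ lies in the same case as $v$. The full atomic type of $v$ over $V(\bA')$ is then determined by the case using only the universal axioms: $E$-relations follow from the cut (via $\UMIII$ and $\UMIV$); $B$-relations involving $v$ follow from the extended linear order on the independent set $V(\bA')\cup\{v\}$ via Lemma~\ref{lem:unique-bet} (and are vacuous in Case 1, where no triple containing $v$ is independent, and in the mixed cases restricted to the appropriate independent triples); and $S$-relations are then forced by $\UMVII$ and $\UMVIII$, in view of Observation~\ref{obs:S-definitions}. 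Since $\bB$ satisfies the same axioms, every such derivation carries over, so $v\mapsto v'$ extends $f$ to an embedding. The main obstacle is Step 3: pinning down the betweenness position $B(b,w,v)$ from the purely edge-based information $E(v,u)\wedge E(v,a)$ demands the $\UBIV$-based chain above, and analogous care is needed in Case 1 of Part 2, where the absence of independent triples through $v$ forces one to recover separation facts via $\UMVII$ on induced $C_4$'s and $P_4$'s rather than via the cleaner $\UMVIII$.
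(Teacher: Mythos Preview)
Your proposal is correct and follows essentially the same route as the paper: case-split on the adjacencies $E(v,u),E(v,w)$, use $\UMIII$/$\UMIV$ for convexity of the neighbour and non-neighbour sets in the linear order $<_{uw}$, and then handle the moreover clause via $\UMIII$/$\UMIV$ for $E$, Lemma~\ref{lem:unique-bet} for $B$, and axiom $\UMVIII$ for $S$. Two small remarks: (i) you are in fact more careful than the paper in Case~3, where the paper simply declares the conclusion ``straightforward via $\UMIII$ and $\UMIV$'' whereas you explicitly rule out the other two betweenness positions using $\UMIII$ and $\UBIV$; (ii) your concern about needing $\UMVII$ in Case~1 is unfounded---every $4$-tuple containing $v$ still contains an independent triple from $V(\bA')$, so $\UMVIII$ alone determines $S$, which is exactly what the paper packages as Observation~\ref{obs:sep-indeptendent} (and Observation~\ref{obs:S-definitions}, which concerns the concrete structure $\mathbb C_3$ rather than arbitrary models of the axioms, is not needed here).
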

\begin{proof}
    Suppose that $v$ is not adjacent to both $u$ and $w$. First consider the case when it is not adjacent
    to neither of the two. In this case, axiom $\UMIII$ implies that $v$ has no neighbour in $V(\bA')$.  Hence,
    using the linear order $<_{uw}$ and Corollary~\ref{cor:order-indep}, we conclude that if $\lnot B(v,u,w)$
    and $\lnot B(u,w,v)$, then there are such $a,b$ with $B(a,v,b)$.
    Secondly, suppose that $E(u,v)$ and $\lnot E(w,v)$ and let $a \in  V(\bA')$ be the $<_{uw}$-minimal element
    that is not adjacent to $v$. It is straightforward to see that $a$ satisfies the third statement via axioms $\UMIII$
    and $\UMIV$. The case $E(w,v)$ and $\lnot E(u,v)$ can be handled symmetrically to find $a,b \in  V(\bA')$ that
    satisfy the fourth statement. 

    Denote by $f'$ the extension of $f$ defined by $v\mapsto v'$. If the first item holds, then $v$ is adjacent
    to every vertex in $V(\bA')$, and $v'$ is adjacent to every vertex $f(V(\bA'))$ (this follows from axiom 
    $\UMIV$). If the second statement holds, then it follows from the local betweenness axioms that
    $B$ is a (total) betweenness relation in $V(\bA')\cup\{v\}$, and in $f(V(\bA'))\cup\{v'\}$, so by
    axiom $\UMI$, $V(\bA')\cup\{v\}$ and $f(V(\bA'))\cup\{v'\}$ are independent sets (with respect to $E$).
    These arguments show that in the first and second item, the extension $f'$ preserves $E$ and $\lnot E$. 
    Combining both arguments, one can show that if  the third or fourth item holds, then $f'$ also preserves
    $E$ and $\lnot E$. The fact that $B(x,y,z)$ if and only if $B(f'(x), f'(y), f'(z))$
    follows from the definition of $u,w,a,b$ and Lemma~\ref{lem:unique-bet}. Finally, to see that $f'$ preserves
    the separation relation first identify the vertices in $V(\bA')\cup\{v\}$ with their images (under to $f'$).
    Then, let $S'$ be the separation relation defined  on $V(\bA')\cup\{v\}$ by this identification and the
    restriction of the separation relation in $\bB$ to $f[V(\bA')\cup\{v\}]$. Since $f[V(\bA')]$ is an independent set
    (with respect to $E$) and the restriction of $S$ and $S'$ to $f[V(\bA')]$ coincide, it follows from
    Observation~\ref{obs:sep-indeptendent} that $S = S'$, and hence $S(x_1,x_2,x_3,x_4)$ if and only if
    $S(f'(x_1), f'(x_2), f'(x_3), f'(x_4))$ for every $x_1,x_2,x_3,x_4\in V(\bA')\cup\{v\}$. Thus, $f'$ is an 
    embedding. 
\end{proof}

Subsets $U, U'\subseteq V(\mathbb A)$ are called \textit{$B$-disjoint}
if they are disjoint, $B(U)\cap U' = \varnothing$,  and $B(U')\cap U = \varnothing$. Moreover, 
we say that $k$ sets  $U_1,\dots, U_k$ are \emph{$B$-disjoint} if $U_i$ and $\bigcup_{j\neq i} U_j$
are $B$-disjoint for each $i\in\{1,\dots, k\}$ --- notice that this is a stronger condition
that being pairwise $B$-disjoint.

\begin{lemma}\label{lem:partition}
Let $\bA$ be an $\{E,B,S\}$-structure  satisfying the universal axioms. For every  $v\in V(\bA)$,
the relation 
$$\sim_v \; := \{(a,b) \in V \mid B(a,b,v) \lor B(b,a,v) \lor a = b\}$$
defines an equivalence relation on $V\setminus (N(v)\cup \{v\})$ with at most two equivalence classes $Y,Z$.
Moreover, $Y$ and $Z$ are independent sets, and  $N(v)$, $Y$, $Z$ are three $B$-disjoint sets. 
\end{lemma}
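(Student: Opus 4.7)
My plan is to verify the four assertions of the lemma in sequence. Reflexivity and symmetry of $\sim_v$ are immediate from $\UBI$, and the independence of each $\sim_v$-class is a byproduct of the definition: $a \sim_v b$ with $a \neq b$ forces $\{a,b,v\}$ to be independent by $\UMI$. So the real work lies in transitivity, the bound of at most two equivalence classes, and the $B$-disjointness.

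For transitivity I would take distinct $a, b, c$ with $a \sim_v b \sim_v c$ and case on the orientation of each of the two betweennesses. Three of the four combinations collapse immediately via $\UBIII$ or $\UBIV$, after rewriting with $\UBI$. The subtle case is $B(b,a,v) \wedge B(b,c,v)$, where neither axiom directly links the two triples; this is the first technical obstacle. I plan to handle it by first using $\UMIII$ against $B(b,a,v)$ to force $\lnot E(a,c)$ (any edge with $b$ or $v$ as the alternative endpoint would contradict the independence of $\{b,a,v\}$ or $\{b,c,v\}$), and then invoking Corollary~\ref{cor:order-indep} on the now $E$-independent set $\{a,b,c,v\}$: in its induced linear order, $\{b,v\}$ are the $B$-bounds, so both $a$ and $c$ are interior and thus one of $B(a,c,v), B(c,a,v)$ must hold.

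For the two-class claim I would suppose, for contradiction, that $a, b, c \in V\setminus(N(v)\cup\{v\})$ are pairwise non-equivalent. If all three pairs are edges, $\UEII$ is violated; otherwise pick a non-edge, WLOG $\{a,b\}$, so that $\UMI$ and $a \not\sim_v b$ together pin $B(a,v,b)$. I then case on $E(a,c)$ and $E(b,c)$: if both are non-edges, $B(a,v,c)$ and $B(b,v,c)$ follow by the same pattern, and $\UBIV$ applied to $B(a,v,b), B(a,v,c)$ yields $B(a,b,c)$ or $B(a,c,b)$, each of which chains with $\UBIII$ against $B(b,v,c)$ to produce a $\sim_v$-equivalence contradicting the hypothesis; if exactly one edge is present, $\UBIV$ forces $\{a,b,c\}$ to be independent, contradicting that edge; if both are edges, $\UMIV$ applied to $B(a,v,b)$ together with the two edges to $c$ forces $E(v,c)$, contradicting $c \notin N(v)$.

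The $B$-disjointness of $N(v), Y, Z$ splits into three pieces. The pieces involving $N(v)$ follow directly from $\UMIV$ (a non-neighbour of $v$ between two neighbours would be forced adjacent to $v$) and $\UMIII$ (a neighbour of $v$ between two non-neighbours would send an edge to $v$ into $Y \cup Z$). The core piece, and the second main obstacle, is ruling out $B(y_1,z,y_2)$ with $y_1,y_2 \in Y$ and $z \in Z$, together with the symmetric case of $y \in Y$ lying between two elements of $Z$. For the first, $\UMI$ applied to each independent triple $\{y_i,z,v\}$ combined with $y_i \not\sim_v z$ pins $B(y_i,v,z)$ for $i = 1,2$; then $\UBIII$ applied to $B(y_1,z,y_2)$ and $B(z,v,y_2)$ yields $B(y_1,v,y_2)$, contradicting $y_1 \sim_v y_2$. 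The symmetric case is handled by the same template after one use of $\UBIV$, with $\UBII$ discarding the spurious branch.
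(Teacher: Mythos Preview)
Your plan follows essentially the same case-analysis strategy as the paper, and most of it is correct (indeed, for transitivity you are more careful than the paper: the configuration $B(b,a,v)\wedge B(b,c,v)$ does not reduce by relabelling to the others and is not hit directly by $\UBIII$ or $\UBIV$; the paper's ``up to symmetry'' glosses over this, whereas your detour through Corollary~\ref{cor:order-indep} is a valid fix).

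However, there is a genuine gap in your $B$-disjointness argument. Recall the definition given just above the lemma: three sets $U_1,U_2,U_3$ are $B$-disjoint when, for each $i$, the set $U_i$ is $B$-disjoint from $\bigcup_{j\neq i}U_j$; the paper explicitly notes this is \emph{strictly stronger} than pairwise $B$-disjointness. Your three ``pieces'' establish that $N(v)$ and $Y\cup Z$ are $B$-disjoint, and that $Y$ and $Z$ are pairwise $B$-disjoint, but for $Y$ to be $B$-disjoint from $N(v)\cup Z$ you must also exclude the \emph{mixed} configuration: $x\in N(v)$, $z\in Z$, $B(x,u,z)$ with $u\in Y$. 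Neither your $\UMIV$ clause (both endpoints in $N(v)$) nor your $\UMIII$ clause (both endpoints outside $N(v)$) nor your core $B(y_1,z,y_2)$ argument (both endpoints in $Y$) covers this.

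The paper treats this case explicitly. Assuming $\lnot E(u,v)$ (otherwise $u\in N(v)$), the triple $\{z,u,v\}$ is independent of size three, so $\UMI$ forces one of $B(z,u,v)$, $B(u,v,z)$, $B(v,z,u)$. The first and third give $u\sim_v z$, hence $u\in Z$. The middle option combines with $B(x,u,z)$ via $\UBIII$ to yield $B(x,v,z)$, whence $\{x,v,z\}$ is independent by $\UMI$, contradicting $x\in N(v)$. You need to insert this (and its $Y\leftrightarrow Z$ twin) to complete the proof.
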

\begin{proof}
    Throughout the proof we interchange $B(a,b,c)$ and $B(c,b,a)$ (axiom $\UBI$). From the definition of $\sim_v$
    we know that it is a symmetric and reflexive relation. Now, suppose that $a\sim_v b$ and $b \sim_v c$, and
    notice that, up to symmetry, there are two non-trivial scenarios: $B(a,b,v)\land B(c,b,v)$ or $B(a,b,v)\land
    B(b,c,v)$. In the latter case, we use axiom $\UBIII$ to see that $B(a,c,v)$ so $a\sim_v c$. Now suppose that
    $B(v,b,a)\land B(v,b,c)$. By axiom $\UBIV$, we conclude that $B(v,b,c)$ or $B(v,c,d)$ so,
    $c\sim_v d$. Therefore, $\sim_v$ is an equivalence relation on $V\setminus (N(v)\cup\{v\})$.
    To see that it has at most two equivalence classes, consider
    $a,b,c\in V\setminus (N(v)\cup\{v\})$, and we observe that at least two of these belong to the  same $\sim_v$-class.
    Since $E$ has no triangles ($\UEII$), we assume without
    loss of generality that $\lnot E(a,b)$, so by axiom $\UMI$, either $B(a,b,v)$, $B(b,v,a)$, or $B(v,a,b)$.
    In the first and last case it immediately follows that $a \sim_v b$, so assume $B(b,v,a)$. In this case,
    axiom $\UMIV$ tells us that any neighbour of both $a$ and $b$ must be a neighbour of $v$, and thus,
    $\lnot E(c,a)$ or $\lnot E(c,b)$. Assume w.l.o.g.\ $\lnot E(c,a)$, so by axiom $\UMI$ either
    $B(a,c,v)$, $B(c,v,a)$, or $B(v,a,c)$. In the first and last case we obtain that $a\sim_v c$. Otherwise,
    we are in the case where  $B(a,v,c)$ and $B(a,v,b)$, and so, $\UBIV$ implies that 
    $B(b,c,v)$ or $B(c,b,v)$, i.e.,  $b\sim_v c$. Therefore, there are exactly two $\sim_v$-classes $Y$ and $Z$.
    Clearly, $Y$ and $Z$ are independent sets because of their definition and axiom $\UMI$. 

    To prove that $N(v), Y,Z$ are three $B$-disjoint sets we must prove that $N(v)$ and $(Y\cup Z)$ are $B$-disjoint,
    $Y$ and $N(v)\cup Z$ are $B$-disjoint, and $Z$ and $N(v)\cup Y$ are $B$-disjoint.
    Let $x_1,x_2\in N(v)$, and notice that by axiom
    $\UMIV$, every $u \in B(\{x_1,x_2\})$ is adjacent to $v$, and so, $B(N(v)) = N(v)$. In particular,
    $B(N(v)) \cap (Y\cup Z) = \varnothing$.
    Similarly, for every $w_1,w_2\in Y\cup Z = V\setminus (N(v)\cup\{v\})$
    and each $u \in B(\{w_1,w_2\})$, axiom $\UMIII$ tells us that $\lnot E(u,v)$ because
    $\lnot E(w_1,v)\land \lnot E(w_2,v)$. Therefore, $B(Y\cup Z)\setminus\{v\} = Y\cup Z$, 
    in particular, $B(Y \cup Z) \cap N(v) = \varnothing$ and so, $N(v)$ and $Y\cup Z$ are $B$-disjoint.
    Now, consider vertices $y_1,y_2\in Y$ and let $u\in B(\{y_1,y_2\})$. Assume without loss of generality that
    $B(v,y_1,y_2)$, so by axiom $\UBIII$ it is the case that $B(v,u,y_2)$, and $u\sim_v y_2$. Therefore, 
    $B(\{y_1,y_2\}) \subseteq Y$, and it follows that $B(Y) = Y$, analogously $B(Z) = Z$. In particular, 
    $B(Y) \cap (Z \cup N(v)) = \varnothing $. To see that $B(Z\cup N(v)) \cap Y = \varnothing$, let
    $w_1,w_2\in Z\cup N(v)$ and $u \in B(\{w_1,w_2\})$, and we see that $u \in Z \cup N(v)$.
    The cases $w_1,w_2 \in Z$ and $w_1,w_2\in N(v)$ were handled before. Also, if $E(v,u)$, then $u \in N(v)$,
    so we assume $B(w_1,u,w_2)$, $w_1 \in Z$, $w_2\in N(v)$, and $\lnot E(u,v)$. Since $\{w_1,u,v\}$ is
    an independent set of three vertices, either $B(w_1,u,v)$, $B(u,v,w_1)$, or $B(v,w_1,u)$. The first and
    last case imply that $u \sim_v w_1$ and so, $u\in Z$. The second case leads to a contradiction: $B(u,v,w_1)$
    and $B(w_2,u,w_1)$ imply $B(w_2,v,w_1)$ (axiom $\UBIII$), and this implies $\lnot E(v,w_2)$ (axiom $\UMI$),
    a contradiction to the definition of $w_2$. Therefore, $Y$ and $Z\cup N(v)$ are $B$-disjoint. Proving that
    $Z$ and $Y\cup N(v)$ are $B$-disjoint can be done analogously.
\end{proof}

\begin{corollary}\label{cor:def-XYZ}
    Let $\bA$ be a $\{E,B,S\}$-structure satisfying the universal axioms and let $\bA'$ be a finite
    substructure of $\bA$. Every $v\in V(\bA)\setminus V(\bA')$ defines a partition  of
    $V(\bA')$ into three (possibly empty) independent $B$-disjoint sets $X_v,Y_v,Z_v$ with the following properties.
    \begin{itemize}
        \item  $X_v\subseteq N(v)$ and $Y_v\cup Z_v\subseteq V\setminus (N(v)\cup\{v\})$, and $v\not\in B(X_v)\cup B(Y_v) \cup B(Z_v)$.
        \item  For any $y,y'\in Y_v$ either $B(y,y',v)$ or $B(y',y,v)$.
        \item  For any $z,z'\in Z_v$ either $B(z,z',v)$ or $B(z',z,v)$.
        \item  For any $y\in Y_v$ and $z\in Z_v$ either $E(y,z)$ or $B(y,v,z)$. 
    \end{itemize}
\end{corollary}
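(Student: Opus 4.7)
The plan is to derive Corollary~\ref{cor:def-XYZ} essentially by restricting the partition of $V(\bA)$ provided by Lemma~\ref{lem:partition} to the finite substructure $V(\bA')$. First I would apply Lemma~\ref{lem:partition} to the element $v \in V(\bA)\setminus V(\bA')$ to obtain the equivalence classes $Y$ and $Z$ of $\sim_v$ on $V(\bA)\setminus(N(v)\cup\{v\})$, together with the fact that $N(v), Y, Z$ are three pairwise $B$-disjoint sets and that $Y, Z$ are independent. I would then define $X_v := N(v)\cap V(\bA')$, $Y_v := Y\cap V(\bA')$, and $Z_v := Z\cap V(\bA')$. Since $v\notin V(\bA')$, these three sets form a partition of $V(\bA')$.

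Next I would verify the bulleted properties one by one. The independence of $X_v$ follows from axiom $\UEII$: any two vertices of $X_v\subseteq N(v)$ adjacent to each other would together with $v$ form a triangle. The independence of $Y_v$ and $Z_v$ is inherited from Lemma~\ref{lem:partition}, and the $B$-disjointness of $X_v, Y_v, Z_v$ follows by restricting the $B$-disjointness of $N(v), Y, Z$ to $V(\bA')$, using the trivial observation that if $U\subseteq U'$ and $W'\subseteq W$ and $U'$ is $B$-disjoint from $W$, then $U$ is $B$-disjoint from $W'$. The containments $X_v\subseteq N(v)$ and $Y_v\cup Z_v\subseteq V(\bA)\setminus(N(v)\cup\{v\})$ are immediate from the definitions. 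The claim $v\notin B(X_v)\cup B(Y_v)\cup B(Z_v)$ follows from the identities $B(N(v)) = N(v)$, $B(Y) = Y$ and $B(Z) = Z$ established inside the proof of Lemma~\ref{lem:partition}, since $v$ belongs to none of these three sets.

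For the second and third bullets, I would observe that any two distinct elements $y, y'\in Y_v$ belong to the same $\sim_v$-class, so by the very definition of $\sim_v$ either $B(y, y', v)$ or $B(y', y, v)$ holds; the same argument applies to any two distinct $z, z'\in Z_v$. The fourth bullet is the only genuinely new ingredient: take $y\in Y_v$ and $z\in Z_v$ with $\lnot E(y, z)$. Since $y$ and $z$ both lie in $V\setminus(N(v)\cup\{v\})$ we also have $\lnot E(y,v)$ and $\lnot E(z,v)$, so $\{y, z, v\}$ is an independent set of three vertices. Axiom $\UMI$ then forces one of $B(y, z, v)$, $B(z, v, y)$, or $B(v, y, z)$ to hold. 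However $y\not\sim_v z$ rules out both $B(y, z, v)$ and $B(v, y, z)$ (the latter being $B(z, y, v)$ by $\UBI$), leaving only $B(z, v, y)$, equivalently $B(y, v, z)$ by $\UBI$, as desired.

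I do not anticipate any significant obstacle, since the statement is essentially a bookkeeping restriction of Lemma~\ref{lem:partition} to the finite set $V(\bA')$; the only mildly delicate point is the fourth bullet, which is nevertheless a short syllogism combining $\UMI$, $\UBI$ and the definition of $\sim_v$.
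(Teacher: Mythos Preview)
Your proposal is correct and matches the paper's intent: the corollary is stated in the paper without proof, as an immediate consequence of Lemma~\ref{lem:partition}, and your argument is precisely the natural way to fill in the details by restricting the partition $N(v),Y,Z$ to $V(\bA')$. The only point worth polishing is the case $y=y'$ (resp.\ $z=z'$) in the second and third bullets, which you implicitly handle by considering distinct elements; the paper's statement is slightly loose here too.
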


Notice that the sets $X_v, Y_v,Z_v$ defined in Corollary~\ref{cor:def-XYZ} also depend on 
$\bA$ and $\bA'$, but these will always be clear from context. 
Now, we aim for a result analogous to Lemma~\ref{lem:V-independent}
for the case where $V(\bA')$ is not an independent set. Unfortunately, we need
to proceed by considering several cases depending on which of the sets
$X_v,Y_v,Z_v$ are non-empty. To handle this, we propose three lemmas,
that altogether build-up to the aimed result.

It will be convenient to use the following shorthand notation for the previously mentioned
lemmas (and for the upcoming section). Consider $n$ variables $x_1,\dots, x_n$. By thinking
of these $n$ variables as points on the circle, we obtain a finite conjunct of 
atomic $\{S\}$-formulas defining a separation relation on $\{x_1,\dots, x_n\}$. If
the points are chosen so that when we start in $x_1$ and traverse the circle clockwise
we see $x_1,\dots, x_n$, then we denote  by $\Sep(x_1,\dots, x_n)$ the corresponding conjunction
of atomic $\{S\}$-formulas.  
In particular, if $S$ is a separation relation, then $\Sep(x_1,x_2,x_3,x_4)$ if and only if $S(x_1,x_2,x_3,x_4)$, 
and $\Sep(x_1,\dots, x_n)$ implies $\Sep(x_2,\dots, x_n,x_1)$ and 
$\Sep(x_1,\dots, x_{i-1}, x_{i+1}, \dots, x_n)$ for every $i \in \{2,\dots, n-1\}$. 
The following is also a straightforward observation implied by the universal axioms, in
particular, the local betweenness axioms, the separation axioms, and axioms $\UMI$ an
$\UMVIII$.

\begin{observation}\label{obs:Sep-y}
    Let $\bA$ be an $\{E,B,S\}$-structure satisfying the universal axioms. 
    Let $x_1,\dots, x_n \in V(\bA)$ be such that $\Sep(x_1,\dots, x_n)$ and $y\in V(\bA)$. If
    there is an $i\in\{1,\dots, n-1\}$ such that $B(x_i,y,x_{i+1})$ and $\lnot B(x_i,x_j,x_{i+1})$
    for every $j\in\{1,\dots, n\}$,  then $\Sep(x_1,\dots, x_i, y, x_{i+1}, \dots, x_n)$. 
\end{observation}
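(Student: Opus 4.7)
The plan is to expand $\Sep(x_1,\dots,x_i,y,x_{i+1},\dots,x_n)$ into its defining conjunction of atomic $S$-formulas and verify each one. Every atom mentioning only elements of $\{x_1,\dots,x_n\}$ is already true by the hypothesis $\Sep(x_1,\dots,x_n)$, so only the atoms involving $y$ require work. By the cyclic symmetry $\USII$ and the reflection symmetry $\USIII$, it suffices to verify a canonical subset of these atoms.

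The crucial step is to establish the anchor atoms $S(x_i,y,x_{i+1},x_k)$ for every $k \in \{1,\dots,n\}\setminus\{i,i+1\}$. Since $B(x_i,y,x_{i+1})$ holds, axiom $\UMI$ forces $\{x_i,y,x_{i+1}\}$ to be an independent set of size three, so axiom $\UMVIII$ applies to the four-tuple $(x_i,y,x_{i+1},x_k)$: the separation $S(x_i,y,x_{i+1},x_k)$ is equivalent to the disjunction
\[
\big(B(x_i,y,x_{i+1}) \wedge \lnot B(x_i,x_k,x_{i+1})\big) \; \lor \; \big(B(x_i,x_k,x_{i+1}) \wedge \lnot B(x_i,y,x_{i+1})\big).
\]
By hypothesis, both $B(x_i,y,x_{i+1})$ and $\lnot B(x_i,x_k,x_{i+1})$ hold (the latter because no $x_j$ lies between $x_i$ and $x_{i+1}$ with respect to $B$), so the first disjunct is true and hence $S(x_i,y,x_{i+1},x_k)$ is verified.

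Having anchored the cyclic position of $y$ between $x_i$ and $x_{i+1}$, every remaining $S$-atom from $\Sep(x_1,\dots,x_i,y,x_{i+1},\dots,x_n)$ that involves $y$ together with three other $x_j$'s is derived by combining the anchor atoms with $S$-atoms from $\Sep(x_1,\dots,x_n)$ via the separation axioms $\USI$--$\USIV$; the transitivity-like axiom $\USIV$ (together with $\USII$ and $\USIII$) does the heavy lifting, propagating the local information about where $y$ sits into the global cyclic order. The main obstacle is the bookkeeping: several subcases arise depending on the relative positions of the three $x_j$'s with respect to $x_i$ and $x_{i+1}$, but each reduces to a routine application of the separation axioms once the anchor atoms are in hand. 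The substantive content lies entirely in $\UMVIII$, which translates local betweenness information about $y$ into a global statement about $S$.
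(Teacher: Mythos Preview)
Your proposal is correct and matches the paper's approach. The paper does not give a proof beyond the remark that the observation ``is a straightforward observation implied by the universal axioms, in particular, the local betweenness axioms, the separation axioms, and axioms $\UMI$ and $\UMVIII$''; your write-up is exactly the intended unpacking---$\UMI$ gives independence of $\{x_i,y,x_{i+1}\}$, $\UMVIII$ yields the anchor atoms $S(x_i,y,x_{i+1},x_k)$, and the separation axioms $\USI$--$\USIV$ propagate these to the remaining conjuncts.
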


\begin{lemma}\label{lem:YZ}
Let $\bA$ and $\bB$ be $\{E,B,S\}$-structures satisfying the universal axioms. Let $\bA'$ be a finite substructure
    of $\bA$ with at least three vertices and at least one edge 
    and let $v\in V(\bA)\setminus V(\bA')$. If $X_v = \varnothing$, $Y_v\neq \varnothing$,
    $Z_v\neq \varnothing$, then (exactly) one of the following holds for some $y_1,y_2 \in Y_v$ and $z_1,z_2 \in Z_v$ such that
    $\{y_1,y_2\}$ is the pair of $B$-bounds of $Y_v$ and $\{z_1,z_2\}$ is the pair of $B$-bounds of $Z_v$.\footnote{The two pairs of $B$-bounds are unique, but the
    labeling is not, and in order to satisfy the items of this lemma, the labels must be chosen adequately.}
\begin{itemize}
    \item $Y_v = \{y_1\}$, $|Z_v| \ge 2$, 
    $y_1\notin N(v)$,
    $E(y_1,z_2)$, $B(v,z_1,z_2)$, and $S(y_1,v,z_1,z_2)$.
    \item $|Y_v| \ge 2$, $Z_v = \{z_1\}$, 
    $z_1 \notin N(v)$,
    $E(y_1,z_1)$,
    $B(y_1,y_2,v)$, and $S(y_2,v,z_1,y_1)$.
    \item $|Y_v|,|Z_v| \ge 2$, 
    $E(y_1,z_2)$, $B(y_1,y_2,v)$, $B(v,z_1,z_2)$, and $\Sep(y_1,y_2,v,z_1,z_2)$.
\end{itemize}
Moreover, in each of the cases above, if $f\colon \bA'\to \bB$ is an embedding and there is
$v'\in V(\bB)\setminus f[V(\bA')]$ such that $\bB$ models the same  formulas
substituting $v$ by $v'$ (and $y_1$, $y_2$, $z_1$, $z_2$ by their images under $f$),
then the extension of $f$  mapping $v$ to $v'$ is an embedding of the substructure of $\bA$ with vertex set
$V(\bA')\cup\{v\}$ into $\bB$.
\end{lemma}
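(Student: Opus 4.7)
The plan is to handle the three cases of the lemma in parallel, first identifying the $B$-bounds of $Y_v$ and $Z_v$ and fixing their labels, then verifying the listed atomic formulas hold, and finally establishing the moreover part. Lemma~\ref{lem:partition} and Corollary~\ref{cor:def-XYZ} tell us that $Y_v$ and $Z_v$ are both independent; since $\bA'$ has at least three vertices, at least one edge, and $X_v=\varnothing$, the edge must run between $Y_v$ and $Z_v$, so both are non-empty and $|Y_v|+|Z_v|\ge 3$. Corollary~\ref{cor:order-indep} produces the unique pair of $B$-bounds for each of $Y_v$ and $Z_v$. To label them, I apply Lemma~\ref{lem:order} to the independent sets $Y_v\cup\{v\}$ and $Z_v\cup\{v\}$: in each linear order $v$ sits at one extreme, so the labels can be chosen with $y_2$ (resp.\ $z_1$) being the bound adjacent to $v$, which forces $B(y_1,y_2,v)$ and $B(v,z_1,z_2)$. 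Whether we land in Case~1, 2, or 3 is then determined by whether $|Y_v|$ or $|Z_v|$ equals $1$.

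To derive $E(y_1,z_2)$ in Case~3 (the other two cases being analogous), pick any edge $E(y,z)$ with $y\in Y_v$ and $z\in Z_v$. Since $B(y_1,y,v)$ (read off the order on $Y_v\cup\{v\}$) and $\lnot E(v,z)$ (as $X_v=\varnothing$), axiom $\UMIII$ forces $E(y_1,z)$; a second application to $B(v,z,z_2)$, $E(y_1,z)$, and $\lnot E(y_1,v)$ yields $E(y_1,z_2)$. For $\Sep(y_1,y_2,v,z_1,z_2)$, I verify each of the five implied $S$-atoms: four of them live on four-subsets containing one of the independent triples $\{y_1,y_2,v\}$ or $\{v,z_1,z_2\}$ and reduce via $\UMVIII$ to previously established $B$-facts (mixed $B$-triples such as $B(y_2,v,z_1)$ coming from item~(4) of Corollary~\ref{cor:def-XYZ}); the remaining atom $S(y_1,y_2,z_1,z_2)$, whose four-tuple contains the edge $y_1z_2$, follows directly from $\UMVI$ applied to $B(y_1,y_2,v)$, $B(v,z_1,z_2)$, and $E(y_1,z_2)$.

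For the moreover part, I extend $f$ by $f'(v)=v'$ and check that $f'$ preserves every atomic formula. Axiom $\UMI$ applied to the hypotheses $B(f(y_1),f(y_2),v')$ and $B(v',f(z_1),f(z_2))$ immediately yields $\lnot E(v',f(u))$ for $u\in\{y_1,y_2,z_1,z_2\}$. For any other $u\in Y_v$, combining $B(f(y_1),f(u),f(y_2))$ (preserved by $f$) with $B(f(y_1),f(y_2),v')$ via two applications of $\UBIII$ gives $B(f(y_1),f(u),v')$, whence $\UMI$ gives $\lnot E(v',f(u))$; the case $u\in Z_v$ is symmetric. Once non-adjacency of $v'$ to $f[V(\bA')]$ is in hand, Lemma~\ref{lem:unique-bet} propagates all $B$-facts within $f[Y_v]\cup\{v'\}$ and $f[Z_v]\cup\{v'\}$, and Corollary~\ref{cor:def-XYZ} pins down $B$ on mixed triples. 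The $S$-facts then follow from $\UMVIII$ on independent triples together with $\UMV$--$\UMVII$ and $\USIV$ on four-tuples containing an edge. The hard part is precisely this last step: the listed $\Sep(y_1,y_2,v,z_1,z_2)$ only constrains separations among the five named vertices, and bootstrapping to arbitrary four-tuples of $V(\bA')\cup\{v\}$ containing an edge requires a careful but routine case analysis orchestrating the $B$-to-$S$ interaction axioms together with separation transitivity.
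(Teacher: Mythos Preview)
Your argument is correct and follows essentially the same architecture as the paper's proof: choose the $B$-bounds and label them so that $B(y_1,y_2,v)$ and $B(v,z_1,z_2)$, derive $E(y_1,z_2)$ from any $Y_v$--$Z_v$ edge, verify $\Sep(y_1,y_2,v,z_1,z_2)$, and then extend the embedding. The minor deviations are that you push the edge to $E(y_1,z_2)$ via two applications of $\UMIII$ (the paper phrases this step through $\UMIV$), and that you verify all five $S$-atoms of $\Sep$ explicitly, invoking $\UMVI$ for the single four-subset $\{y_1,y_2,z_1,z_2\}$ not containing $v$, whereas the paper derives only three of them via $\UMVIII$ and appeals to the separation axioms to get the rest.

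The one place worth tightening is the $S$-preservation in the ``moreover'' part, which you describe as a hard case analysis needing $\UMV$--$\UMVII$ and $\USIV$. It is in fact simpler than you suggest: any four-tuple of $V(\bA')\cup\{v\}$ that contains $v$ has its remaining three vertices drawn from $Y_v\cup Z_v$, so at least two lie in the same block; together with $v$ they form an independent triple (inside $Y_v\cup\{v\}$ or $Z_v\cup\{v\}$), and $\UMVIII$ applies directly once the relevant $B$- and $\lnot B$-facts are in hand. Four-tuples not containing $v$ are already handled by $f$. The paper reaches the same conclusion by a different, reusable device: since $f'$ preserves $B$ and the $B$-disjointness of $Y_v,Z_v$, one may inductively insert the elements of $Y_v\setminus\{y_1,y_2\}$ and $Z_v\setminus\{z_1,z_2\}$ into the base relation $\Sep(y_1,y_2,v,z_1,z_2)$ via Observation~\ref{obs:Sep-y}, in parallel on the $\bA$-side and the $\bB$-side. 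This inductive route is the one the paper also deploys in Lemmas~\ref{lem:XY} and~\ref{lem:XYZ}, so it is worth internalising.
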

\begin{proof}
    The three items are clearly disjoint cases.
    From the definition of $Y_v$ and $Z_v$ (Corollary~\ref{cor:def-XYZ}), 
    $\lnot E(v,y_i)$ and $\lnot E(v,z_j)$ hold for any $i,j\in\{1,2\}$. 
    Since $Y_v$ and $Z_v$ are subsets of different $\sim_v$ equivalence classes (see Lemma~\ref{lem:partition}),
    for every $y,y'\in Y_v$ with $B(y,y',v)$, and each $z\in Z_v$ we have  $\lnot B(y,z,v)$,
    and so,  axiom $\UMVIII$ implies $S(y,y',v,z)$. Similarly, for each $z,z'\in Z_v$
    with $B(y,z,z')$ and each $y\in Y_v$ we have $S(y,v,z,z')$. 

    Now, assume that $|Y_v|,|Z_v| \ge 2$. Let $y_1,y_2 \in Y_v$ be 
    such that $\{y_1,y_2\}$ is the pair of $B$-bounds of $Y_v$ and  $B(y_1,y_2,v)$, 
    and similarly let $z_1,z_2 \in Z_v$ be such that $\{z_1,z_2\}$ is the pair of $B$-bounds of $Z_v$
    such that $B(v,z_1,z_2)$. By the arguments in the previous paragraph we have
    $S(y_1,y_2,v,z_1)$, $S(y_1,y_2,v,z_2)$, and $S(y_1,v,z_1,z_2)$. Since $S$
    is a separation relation, it follows that $\Sep(y_1,y_2,v,z_1,z_2)$. It remains to verify $E(y_1,z_2)$. Since there are $y\in Y$ and $z\in Z$ with $E(y,z)$,
    we can assume by axiom $\UMIV$ that $y\in\{y_1,y_2\}$ and $z\in\{z_1,z_2\}$. In turn, 
    since $B(y_1,y_2,v)$ and $B(v,z_1,z_2)$, by the same axiom and $\lnot E(y_i,v)\land \lnot E(z_j,v)$
    for $i,j\in \{1,2\}$, we conclude $E(y_1, z_2)$. Therefore, if $|Y_v|,|Z_v| \ge 2$,
    then the third item holds. 
    Otherwise, since $\bA'$ has at least three vertices and $X_v = \emptyset$, we have that $|Y_v| = 1$ and $|Z_v| \geq 2$ or $|Y_v| \geq 2$ and $|\{Z_v\}|=1$. 
    These two cases can be proved with similar arguments.

    Finally, we prove that the extension $f'$ of $f$ defined by $v\mapsto v'$ is an embedding.  Recall
   that in any of the items  $y_1,y_2,z_1,z_2 \notin N(v)$. 
   Since $\{y_1,y_2\}$ is the pair of $B$-bounds of $Y_v$ and $f$ is an embedding, $\{f(y_1),f(y_2)\}$
   is the pair of $B$-bounds of $f[Y_v]$. Since $v'$ is neither adjacent to $f(y_1)$ nor to $f(y_2)$, axiom $\UMIII$ implies
    that $v'$ is not adjacent to any $y$ in $f[Y_v]$ --- analogously, $v'$ is not adjacent to any
    $z\in Z_v$. Hence, if follows from the definition of $Y_v$ and $Z_v$ that the extension $f'$
    preserves $E$ and $\lnot E$ (in any of the items). To see that $f$ preserves $B$ and $\lnot B$
    we first mention that it suffices to prove that it preserves $E$, $\lnot E$, and $B$ (see axioms
    $\UMI$, and local betweenness axioms). To see that $f'$ preserves $B$ first notice that
    $B$ it a (total) betweenness relation on $Y_v \cup\{v\}$ and on  $f[Y_v] \cup\{v'\}$, because
    these are independent sets of vertices. Hence, via Lemma~\ref{lem:unique-bet}, we see that $f'$
    preserves $B$ for triples $(a,b,c)\in (Y_v \cup\{v\})^3$, and one can argue similarly that
    $f'$ preserves $B$ for triples $(a,b,c)\in (Z_v \cup\{v\})^3$.  Recall that there is at least
    one edge $yz$ for $y\in Y_v$ and $z\in Z_v$, so $\lnot B(y,z,v)$ (axiom $\UMI$), and  thus
    $y$ and $z$ cannot belong to the same $\sim_v$-equivalence class  (see  Lemma~\ref{lem:partition}). 
    Similarly, $f(y)$ and $f(z)$ cannot belong to the same $\sim_{v'}$-equivalence class. Therefore, 
    $Y_v$ and $Z_v$, and $f[Y_v]$ and $f[Z_v]$ belong to different 
    equivalence
    classes of $\sim_v$ and of $\sim_{v'}$, respectively. This implies that for each $y\in Y$ and $z\in Z$, either $E(y,z)$ and $E(f(y), f(z))$,
    or $B(y,v,z)$ and $B(f(y), v', f(z))$, concluding that the extension $f'$ preserves $B$ and $\lnot B$.
    Finally, we argue that the extension $f'$ preserves $S$ and $\lnot S$, and we consider the third item. 
    First recall from the definition of $Y_v$ and $Z_v$ (Corollary~\ref{cor:def-XYZ}) and the choice of $y_1,y_2,z_1,z_2$
    that 
    \begin{itemize}
        \item $Y_v\subseteq B(\{y_1,y_2\})$ and $Z_v\subseteq B(\{z_1,z_2\})$,
        \item $v\not\in B(\{y_1,y_1\})
    \cup B(\{z_1,z_2\})$, and 
    \item $Y_v$ and $Z_v$ are $B$-disjoint. 
    \end{itemize}
    Moreover, since $f'$ preserves $B$, it is also
    the case that 
    \begin{itemize}
        \item 
        $f'[Y_v]\subseteq B(\{f'(y_1),f(y_2)\})$ and 
     $f'[Z_v]\subseteq B(\{f'(z_1),f'(z_2)\})$,
\item     $v'\not\in B(\{f'(y_1),f'(y_2)\}) \cup B(\{f'(z_1),f'(z_2)\})$,
    and 
    \item $f'[Y_v]$ and $f'[Z_v]$ are $B$-disjoint. 
    \end{itemize}
    Thus, we conclude that $f'$ preserves the separation
    relation by inductively applying Observation~\ref{obs:Sep-y} where the base case holds because
    $\Sep(y_1,y_2,v,z_1,z_2)$ and $\Sep(f'(y_1),f'(y_2),v',f'(z_1), f'(z_2))$. The first two items can be handled
    similarly. We thus conclude that $f'$ is an embedding.
\end{proof}

In the following lemma, we consider the case $X_v \neq \varnothing$, $Y_v\neq \varnothing$, and $Z_v =  \varnothing$.

\begin{lemma}\label{lem:XY}
Let $\bA$ and $\bB$ be $\{E,B,S\}$-structures satisfying the universal axioms. Let $\bA'$ be a finite substructure
    of $\bA$ with at least three elements and at least one edge, and $v\in V(\bA)\setminus V(\bA')$.
    If $X_v,Y_v,Z_v \neq \varnothing$, 
    then (exactly) one of the following holds 
    for some $x_1,x_2 \in X_v$ and $y_1,y_2 \in Y_v$
    such that $\{x_1,x_2\}$ is the pair of $B$-bounds of $X_v$ and $\{y_1,y_2\}$ is the pair of $B$-bounds of $Y_v$.
\begin{itemize}
    \item $|X_v| \ge 2$, $Y_v = \{y_1\}$, $x_1,x_2\in N(v)$, $y_1\not\in N(v)$,  $E(x_1,y_1)$, and $S(y_1,v,x_1,x_2)$.
    \item $X_v =\{x_1\}$, $|Y_v| \ge 2$, $x_1\in N(v)$, $E(x_1,y_2)$, and $B(y_1,y_2,v)$.
    \item $|X_v|,|Y_v| \ge 2$, $x_1,x_2\in N(v)$, $B(y_1,y_2,v)$, and $\Sep(x_1,x_2,y_1,y_2,v)$.
\end{itemize}
Moreover, in each of the cases above, if $f\colon \bA'\to \bB$ is an embedding and there is
$v'\in V(\bB)\setminus f[V(\bA')]$ such that $\bB$ models the same formulas substituting
$v$ by $v'$ (and $x_1$, $x_2$, $y_1$, $y_2$, by their images under $f$),
then the extension of $f$  mapping $v$ to $v'$ is an embedding of the substructure of $\bA$ with vertex set
$V(\bA')\cup\{v\}$ into $\bB$.
\end{lemma}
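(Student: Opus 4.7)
The plan is to prove this by adapting the strategy of Lemma~\ref{lem:YZ} to the complementary case. I read the hypothesis as $X_v,Y_v\neq\varnothing$ and $Z_v=\varnothing$: this is consistent with the label of the lemma, with the fact that none of the three enumerated cases mentions $Z_v$ or any $z_i$, and with the parallelism to Lemma~\ref{lem:YZ}. Setup: since $Z_v=\varnothing$ we have $V(\bA')=X_v\cup Y_v$; both sets are independent by Corollary~\ref{cor:def-XYZ}, so $\UEII$ forces every edge of $\bA'$ to cross between them, and the edge-hypothesis gives at least one such crossing edge. When $|X_v|\ge 2$ (resp.\ $|Y_v|\ge 2$), Corollary~\ref{cor:order-indep} produces the unique pair of $B$-bounds $\{x_1,x_2\}$ (resp.\ $\{y_1,y_2\}$), and I orient the $y$-labels so that $B(y_1,y_2,v)$, which is possible because all of $Y_v$ lies in a single $\sim_v$-class.

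Cross-adjacency and labels: applying $\UMIV$ (if $B(x_1,x,x_2)$ and $E(x_1,y),E(x_2,y)$ then $E(x,y)$) and $\UMIII$ (if $B(x_1,x,x_2)$ and $E(x,y)$ then $E(x_1,y)$ or $E(x_2,y)$) gives the expected ``interval'' behaviour of the cross-adjacencies along the $B$-order of $X_v$; the same two axioms applied along the independent triple $\{y_1,y_2,v\}$ give the dual monotonicity along the $B$-order of $Y_v$. Starting from an arbitrary crossing edge, these two monotonicity facts let me relabel the $B$-bounds so that the edge claimed in the lemma ($E(x_1,y_1)$ in case (i); $E(x_1,y_2)$ in cases (ii) and (iii)) actually holds; the remaining cross-adjacencies are then determined up to the freedom left open by the lemma statement.

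Deriving separation: each asserted $S$-atom is about a $4$-tuple whose induced subgraph is a $P_4$, $2K_2$ or $C_4$, or contains the independent triple $\{y_1,y_2,v\}$. In the latter case axiom $\UMVIII$ forces the separation directly; in the former cases axiom $\UMVII$ converts a candidate $S$-statement into a specific pair of edges, and the three cyclic alternatives offered by $\USI$ are pruned by inspecting which edge pattern matches the adjacencies established in the previous step. Cases (i) and (ii) each assert a single atomic $S$-formula and are dispatched this way directly. For case (iii) I anchor on the $4$-tuples $\{y_1,y_2,v,x_i\}$ (which involve the independent triple) and on $\{x_1,x_2,y_1,y_2\}$, and then glue via $\USII$--$\USIV$ and Observation~\ref{obs:Sep-y} to obtain the full $\Sep(x_1,x_2,y_1,y_2,v)$.

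Moreover clause: given $f\colon\bA'\to\bB$ and $v'\in V(\bB)$ satisfying the same listed formulas, the extension $f'(v)=v'$ preserves $E$ on all pairs through $v$ by using $\UMIV$ to push $E(v',\cdot)$ through $f[X_v]$ from the bounds and $\UMIII$ to push $\lnot E(v',\cdot)$ through $f[Y_v]$; $B$-preservation on triples through $v$ follows from Lemma~\ref{lem:unique-bet} applied separately to $X_v\cup\{v\}$ and $Y_v\cup\{v\}$; $S$-preservation is obtained by feeding each remaining element of $X_v\cup Y_v$ into the anchor cyclic order via Observation~\ref{obs:Sep-y}. The main obstacle will be precisely the $S$-preservation step in case (iii): because $v$ is adjacent to the $X_v$-side, Observation~\ref{obs:sep-indeptendent} (which streamlined the analogous step in Lemma~\ref{lem:YZ}) is unavailable, so I must chase each $x\in X_v$ through the cyclic order explicitly using the $B$-monotonicity of the previous step and the $C_4$-clause of $\UMVII$ to assign it a uniquely determined slot between $x_1$ and $x_2$ in the $\Sep(\ldots)$ chain.
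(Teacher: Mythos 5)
Your reading of the hypothesis as $Z_v=\varnothing$ is the intended one (the stated ``$X_v,Y_v,Z_v\neq\varnothing$'' is a typo), and your argument follows essentially the same route as the paper's proof: $B$-bounds via Corollary~\ref{cor:order-indep} with $y$-labels oriented so that $B(y_1,y_2,v)$, cross-adjacency propagation via $\UMIII$/$\UMIV$, separation via $\UMI$ plus $\UMVIII$ for tuples containing the independent triple $\{y_1,y_2,v\}$ and via $\UMVII$ for the $P_4$/$C_4$ tuples, gluing into $\Sep(x_1,x_2,y_1,y_2,v)$ with Observation~\ref{obs:Sep-y}, and the moreover clause via Lemma~\ref{lem:unique-bet} together with the same inductive $\Sep$-extension argument. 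No substantive differences.
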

\begin{proof}
    Similarly as in Lemma~\ref{lem:YZ}, the three items are disjoint cases, and we first assume that $|Y_v| \ge 2$.
    Let $y_1,y_2 \in Y_v$ be such that the $\{y_1,y_2\}$ is the pair of $B$-bounds of $Y_v$ and 
    $B(y_1,y_2,v)$. Notice that if $E(x,y_1)$ for some $x\in X_v$, then axiom $\UMIV$ and $E(x,v)$
    imply $E(x,y_2)$. Also, $E(x,v)$ and axiom $\UMI$ imply $\lnot B(y_1,x,v)$, and this, together
    with $B(y_1,y_2,v)$ and axiom $\UMVIII$, implies $S(y_1,y_2,v,x)$ for every $x\in X_v$. Notice
    that at this point, it already follows that if $X_v =\{x_1\}$, $|Y_v| \ge 2$, then the second
    item holds. Suppose $|X_v| \ge 2$, and  let $x_1,x_2 \in X_v$ be 
    such that $\{x_1,x_2\}$ is the pair of $B$-bounds of $X_v$. 
    Since $S(y_1,y_2,v,x_1)$ and $S(y_1,y_2,v,x_2)$ we assume that $\Sep(y_1,y_2,v,x_1,x_2)$ (permute 
    the labels if needed), equivalently $\Sep(x_1,x_2,y_1,y_2,v)$. All is left to show now
    is $E(x_1,y_2)$, and since $E(x,y)$ for some $x\in X_v$ and $y\in Y_v$, we can assume by the previous
    argument that $E(x,y_2)$ for some $x\in X_v$. If $B(x_1,x,x_2)$, then axiom $\UMIII$ implies
    $E(x_1,y_2)\lor E(x_2,y_2)$. Assume $E(x_2,y_2)$, and notice that since $v$ is adjacent to
    $x_1$ and $x_2$, and $\lnot E(x_1,x_2)$ and $\lnot E(y_2,v)$, the vertices $x_1,x_2,y_2,v$ induce
    a $P_4$ or a $C_4$. Since $S(x_1,x_2,y_2,v)$, it follows via axiom $\UMVII$ that $E(x_1,y_2)$. 
    Therefore, $\Sep(x_1,x_2,y_1,y_2,v)$ and $E(x_1,y_2)$, and the remaining formulas of the third
    item hold by the definition of $y_1,y_2,x_1,x_2$ and of $X_v$ and $Y_v$.  Therefore, the
    lemma is true for the cases when    $|Y_v| \ge 2$. The case $|X_v| \ge 2$, $|Y_v| = \{y_1\}$
    follows by repeating some of the previous arguments: use axiom $\UMIII$ to show that
    $y_1$ is adjacent to one of $x,x' \in X_v$; notice that $x,x',v,y_1$
    must induce a $P_4$ or a $C_4$; use $\UMVII$ to argue that $S(x,x',y_1,v)$ and
    $E(y_1,x)$, or $S(x',x,y_1,v)$ and $E(y_1,x')$. Finally, label $x,x'$ by $x_1,x_2$ (according
    to the previous two possible cases) so that $S(x_1,x_2,y_1,v)$ and $E(y_1,x_1)$.
    We thus conclude that if $|X_v| \ge 2$ and $|Y_v| = 1$, then the first item holds.

    The moreover statement can be handled similarly as in Lemmas~\ref{lem:V-independent} and~\ref{lem:YZ};
    we give a brief overview. First, use axiom $\UMI$ to see that in any of the items above
    $y_1,y_2 \notin N(v)$, and so $E$ and $\lnot E$ are clearly preserved by axioms $\UMIII$ and $\UMIV$.
    Secondly $B$ and $\lnot B$ are preserved because $v$ and $v'$ are adjacent to every $x\in X_v$ and
    $x'\in f[X_v] = X_{v'}$, respectively; $Y_v$ and $Y_{v'}$ are independent sets with pairs of $B$-bounds $\{y_1,y_2\}$
    and $\{f(y_1),f(y_2)\}$, respectively; $B(y_1,y_2,v)$ and $B(f(y_1), f(y_2), v')$; now use axioms $\UMI$ and
    Lemma~\ref{lem:unique-bet} to conclude that $f$ preserves $B$ and $\lnot B$. Finally, it can be proved
    that $f'$ preserves the separation relation following a similar argument as in the last eight lines
    of the proof of Lemma~\ref{lem:YZ}:
    using the definitions of $x_1,x_2,y_1,y_2$ and of $X_v$ and $Y_v$ (Corollary~\ref{cor:def-XYZ}), 
    using the fact that $f'$ preserves $E$, $\lnot E$, $B$, and $\lnot B$, and inductively using
    Observation~\ref{obs:Sep-y} where the base case holds because $\Sep(x_1,x_2,y_1,y_2,v)$ and
    $\Sep(f'(x_1), f'(x_2), f'(y_1), f'(y_2), v')$.
    Therefore, $f'$ is an injective function preserving the separation relation, $B$, $\lnot B$, $E$ and $\lnot E$,
    i.e., an embedding.
\end{proof}

Finally, we consider the case when neither of the sets $X_v,Y_v,Z_v$ is the empty set. As we did in the previous two
lemmas, we distinguish between subcases depending on which of these sets have cardinality one.

\begin{lemma}\label{lem:XYZ}
Let $\bA$ and $\bB$ be $\{E,B,S\}$-structures satisfying the universal axioms. Let $\bA'$ be a finite substructure
    of $\bA$ and $v\in V(\bA)\setminus V(\bA')$. If $X_v \neq \varnothing$, $Y_v\neq \varnothing$, $Z_v\neq \varnothing$,
    then (exactly) one of the following holds 
    for some choice of $x_1,x_2 \in X_v$, $y_1,y_2 \in Y_v$, and $z_1,z_2 \in Z_v$ such that
   $\{x_1,x_2\}$ is the pair of $B$-bounds of $X_v$, $\{y_1,y_2\}$ is the pair of $B$-bounds of $Y_v$, and $\{z_1,z_2\}$ is the pair of $B$-bounds of $Z_v$. 
\begin{itemize}
    \item $X_v = \{x_1\}$, $Y_v = \{y_1\}$, $Z_v = \{z_1\}$, $x_1\in N(v)$, $y_1,z_1\not\in N(v)$,
    $S(x_1,y_1,v,z_1)$, and $x_1,y_1,z_1$ induce at least one edge.
    \item $|X_v| \ge 2$, $Y_v =\{y_1\}$, $Z_v = \{z_1\}$, $x_1,x_2\in N(v)$, $y_1,z_1\not\in N(v)$, 
    $\Sep(x_1,x_2,y_1,v,z_1)$, and each triple $x_1,y_1,z_1$ and $x_2,y_1,z_2$ induce at least
    one edge.
    \item $|X_v| = \{x_1\}$, $|Y_v| \ge 2$, $Z_v = \{z_1\}$,  $x_1\in N(v)$, $z_1\not\in N(v)$,
    $B(y_1,y_2,v)$, $\Sep(x_1, y_1,y_2, v, z_1)$, and each triple $x_1,y_1,z_1$ and $x_1,y_2,z_1$
    induce at least one edge.
    \item $|X_v| = \{x_1\}$, $Y_v  = \{y_1\}$, $|Z_v| \ge 2$, $x_1\in N(v)$, $y_1\not\in N(v)$,
    $B(v,z_1,z_2)$, $\Sep(x_1,y_1,v,z_1,z_2)$, and each triple $x_1,y_1,z_1$ and $x_1,y_1,z_2$
    induce at least one edge.
    \item $|X_v|,|Y_v| \ge 2$, $Z_v  = \{z_1\}$, $x_1,x_2\in N(v)$, $z_1\not\in N(v)$,
    $B(y_1,y_2,v)$, $\Sep(x_1,x_2,y_1,y_2,v,z_1)$, and for each $i,j\in\{1,2\}$ the triple $x_i,y_j,z_1$
    induces at least one edge.
    \item $|X_v|,|Z_v| \ge 2$, $Y_v  = \{y_1\}$, $x_1,x_2\in N(v)$, $y_1\not\in N(v)$,
    $B(v,z_1,z_2)$, $\Sep(x_1,x_2,y_1,v,z_1,z_2)$, and for each
    $i,k\in\{1,2\}$ the triple $x_i,y_1,z_k$ induces at least one edge.
    \item $X_v = \{x_1\}$, $|Y_v|,|Z_v| \ge 2$, $x_1\in N(v)$, 
    $B(y_1,y_2,v)$, $B(v,z_1,z_2)$, $\Sep(x_1,y_1, y_2,v,z_1,z_2)$,
    and for each $j,k\in\{1,2\}$ the triple $x_1,y_j,z_k$ induces at least one edge.
    \item $|X_v|,|Y_v|,|Z_v| \ge 2$, $x_1,x_2\in N(v)$, $B(y_1,y_2,v)$, $B(v,z_1,z_2)$,
    $\Sep(x_1,x_2,y_1, y_2,v,z_1,z_2)$, and for each $i,j,k\in\{1,2\}$ the triple $x_i,y_j,z_k$ induces at least one edge.
\end{itemize}
Moreover, in each of the cases above, if $f\colon \bA'\to \bB$ is an embedding and there is
$v'\in V(\bB)\setminus f[V(\bA')]$ such that $\bB$ models the same formulas substituting
$v$ by $v'$  (and $x_1$, $x_2$, $y_1$, $y_2$, $z_1$, $z_2$ by their
images under $f$), then the extension of $f$  mapping $v$ to $v'$ is an embedding of the substructure of
$\bA$ with vertex set $V(\bA')\cup\{v\}$ into $\bB$.
\end{lemma}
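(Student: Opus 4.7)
The plan is to extend the strategy of Lemmas~\ref{lem:V-independent}, \ref{lem:YZ}, and~\ref{lem:XY}, combining their conclusions to describe the interplay of the three non-empty classes $X_v,Y_v,Z_v$. First I would invoke Corollary~\ref{cor:order-indep} on each of the three independent sets to obtain their pairs of $B$-bounds $\{x_1,x_2\}$, $\{y_1,y_2\}$, $\{z_1,z_2\}$ (where we allow $x_1=x_2$ when $|X_v|=1$, and similarly for the others), and then label the bounds of $Y_v$ and $Z_v$ so that $B(y_1,y_2,v)$ whenever $|Y_v|\ge 2$ and $B(v,z_1,z_2)$ whenever $|Z_v|\ge 2$; this is possible by the definition of $Y_v$ and $Z_v$ in Corollary~\ref{cor:def-XYZ} together with Lemma~\ref{lem:unique-bet}.

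Next I would split into the eight cases coming from the choices $|X_v|\in\{1,\ge 2\}$, $|Y_v|\in\{1,\ge 2\}$, $|Z_v|\in\{1,\ge 2\}$. In each case the structural claims split into three independent pieces: the internal structure among $X_v\cup\{v\}$ (an edge-clique attached to $v$, giving a $\Sep$-fragment among $x_1,x_2,v$ via axioms $\UMVII$ and $\USI$), the internal structure among $Y_v\cup Z_v\cup\{v\}$ (to which Lemma~\ref{lem:YZ} applies after removing $X_v$), and the $X_v$--$Y_v$ and $X_v$--$Z_v$ interactions (Lemma~\ref{lem:XY} applied after removing $Z_v$ and $Y_v$ respectively). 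The three $\Sep$-fragments can be concatenated into the single chain $\Sep(x_1,x_2,y_1,y_2,v,z_1,z_2)$ of the last case using the universal separation axiom $\USIV$ iteratively, collapsing the chain in the cases where some class has size $1$.

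The remaining content of each item is the assertion that every triple $x_i,y_j,z_k$ induces at least one edge. For this, observe that if $y\in Y_v$ and $z\in Z_v$ satisfy $\lnot E(y,z)$, then the fourth bullet of Corollary~\ref{cor:def-XYZ} gives $B(y,v,z)$; so for any $x\in X_v$, axiom $\UMIII$ applied to $B(y,v,z)$ and $E(v,x)$ forces $E(x,y)\lor E(x,z)$. Hence at least one of $E(x,y), E(x,z), E(y,z)$ holds, giving the triple-edge condition for every relevant $(x_i,y_j,z_k)$.

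For the moreover statement, the extension $f'$ sending $v\mapsto v'$ preserves $E$ and $\lnot E$ because $X_v\subseteq N(v)$ and $Y_v\cup Z_v\subseteq V\setminus(N(v)\cup\{v\})$, and because axioms $\UMIII$ and $\UMIV$ together with the $B$-bounds propagate adjacency/non-adjacency from the six named vertices to all of $X_v\cup Y_v\cup Z_v$; preservation of $B$ and $\lnot B$ follows from Lemma~\ref{lem:unique-bet} applied to the independent sets $Y_v\cup\{v\}$ and $Z_v\cup\{v\}$ (plus the observation that two elements lie in the same $\sim_v$-class iff the corresponding non-edge is witnessed by $B$-between-$v$, a property already preserved); and preservation of $S$ and $\lnot S$ follows from the chain $\Sep(x_1,x_2,y_1,y_2,v,z_1,z_2)$ and its image, extended to all of $V(\bA')\cup\{v\}$ by repeatedly inserting intermediate vertices via Observation~\ref{obs:Sep-y}, exactly as in the final paragraphs of Lemmas~\ref{lem:YZ} and~\ref{lem:XY}. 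The main obstacle is purely organizational: the eight-way case split forces a long bookkeeping argument to pin down the exact $\Sep$-string in each subcase, but each case is obtained by a routine combination of Lemmas~\ref{lem:YZ}, \ref{lem:XY}, axioms $\UMIII$--$\UMVIII$, and the separation axioms.
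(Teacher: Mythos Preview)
Your outline is largely sound and tracks the paper's strategy for the ``moreover'' clause, but the first half of the argument has a genuine technical snag: Lemmas~\ref{lem:YZ} and~\ref{lem:XY} cannot be invoked as black boxes on the substructures $Y_v\cup Z_v$, $X_v\cup Y_v$, $X_v\cup Z_v$. Both lemmas carry the hypothesis that the substructure contains at least one edge (and at least three vertices), and in the present setting that edge need not exist. For instance, it is perfectly consistent with the universal axioms that every $y\in Y_v$ and $z\in Z_v$ satisfy $\lnot E(y,z)$ (so $B(y,v,z)$ everywhere) while all edges in $\bA'$ run between $X_v$ and $Y_v\cup Z_v$; then Lemma~\ref{lem:YZ} simply does not apply to $Y_v\cup Z_v$, and its conclusion $E(y_1,z_2)$ is actually false. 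A second, related issue: the two invocations of Lemma~\ref{lem:XY} (one with $Y_v$, one with $Z_v$) each pick their own labeling of the pair $\{x_1,x_2\}$, and you would still need to argue that a single labeling makes both $\Sep$-fragments true before concatenating via $\USIV$.

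The paper sidesteps both problems by proving directly that $S(x,y,v,z)$ holds for every $x\in X_v$, $y\in Y_v$, $z\in Z_v$: if $E(y,z)$, the four vertices $x,y,v,z$ induce a $P_4$ or $2K_2$ and $\UMVII$ gives $S(x,y,v,z)$; if $\lnot E(y,z)$, then $B(y,v,z)$ and $\lnot B(y,x,z)$ (by $B$-disjointness) so $\UMVIII$ gives $S(x,y,v,z)$. From this uniform base one then picks the labeling of $\{x_1,x_2\}$ once, obtains $\Sep(x_1,x_2,y_1,v,z_2)$, and inserts $y_2$ and $z_1$ one at a time via Observation~\ref{obs:Sep-y}. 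Your modular decomposition can be repaired along these lines, but as written the black-box calls do not go through.

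Your derivation of the triple-edge condition via $\UMIII$ (from $B(y,v,z)$ and $E(v,x)$ conclude $E(x,y)\lor E(x,z)$) is correct and is a clean alternative to the paper's argument, which instead observes that an edge-free triple would force $B(x,y,z)\lor B(y,z,x)\lor B(z,x,y)$ by $\UMI$, contradicting $B$-disjointness of $X_v,Y_v,Z_v$. The ``moreover'' paragraph is essentially the paper's argument.
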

\begin{proof}
    Let $x\in X_v$, $y\in Y_v$,  $z\in Z_v$ and notice that if there is no edge induced by
    the triple $x,y,z$, then axiom $\UMI$ implies that $B(x,y,z)$, $B(y,z,x)$ or $B(z,x,y)$.
    But either of this contradicts the fact that $X_v$, $Y_v$, $Z_v$ are $B$-disjoint sets
    (see Corollary~\ref{cor:def-XYZ}, and the definition of $B$-disjoint sets above Lemma~\ref{lem:partition}).
    This means that in every item and for any choice of $x_1,x_2,y_1,y_2,z_1,z_2$,  the triple $x_i,y_j,z_k$
    induces at least one edge for every for each $i,j,k\in\{1,2\}$. Also, from the definition of 
    $X_v,Y_v,Z_v$ in every item the formulas $E(v,x_i)$, $\lnot E(v,y_j)$, $\lnot E(v,z_k)$ hold
    for $i,j,k\in\{1,2\}$.

    Now we show that $S(x,y,v,z)$ for every $x\in X_v$, $y\in Y_v$, and  $z\in Z_v$. By the last item
    of Corollary~\ref{cor:def-XYZ}, either $E(z,y)$ or $B(y,v,z)$. Since $X_v, Y_V, Z_v$ are $B$-disjoint,
    it follows that
    $\lnot B(y,x,z)$ and so, if $B(y,v,z)$, them axiom $\UMVIII$ implies $S(x,y,v,z)$. Otherwise, 
    if $E(z,y)$, since $E(v,x)$, $\lnot E(v,y)$, and $\lnot E(v,z)$,  it must be the case that 
    $x,y,v,z$ induce a $P_4$ or $2K_2$ because $\bA$ is triangle-free (axiom $\UEII$). Thus, it follows
    from axiom $\UMVII$ and $\lnot E(v,y)\land \lnot E(v,z)$ that $S(x,y,v,z)$.
    
    Notice that the first two paragraphs already prove that the first item holds whenever $|X_v| =
    |Y_v| = |Z_v| = 1$. Now we consider the case $|X_v|, |Y_v|, |Z_v| \ge 2$, and let $y_1,y_2 \in Y_v$ be
    such that $\{y_1,y_2\}$ is the pair of $B$-bounds of $Y_v$ and $B(y_1,y_2,v)$, similarly, let $z_1,z_2 \in Z_v$ be such that
    $\{z_1,z_2\}$ is the pair of $B$-bounds of $Z_v$ and 
    that $B(v,z_1,z_2)$, and $x,x' \in X_v$ such that $\{x,x'\}$ is the pair of $B$-bounds of $X_v$. By the second paragraph and the fact
    that $S$ is a separation relation, it must be the case that either $\Sep(x,x',y_1,v,z_2)$
    or $\Sep(x',x,y_1,v,z_2)$, so relabel $x$ and $x'$ to $x_1$ and $x_2$ so that $\Sep(x_1,x_2,y_1,v,z_2)$.
    Notice that $\lnot B(y_1,x_1,v)$ and $\lnot B(y_1,x_2,v)$ (because $E(v,x_1)$ and $E(v,x_2)$,
    and axiom $\UMI$), and $\lnot B(y_1,z_2,v)$ (from the definition $Y_v$ and $Z_v$ in
    Corollary~\ref{cor:def-XYZ} and from definition of the equivalence relation $\sim_v$ in Lemma~\ref{lem:partition}). 
    Since $B(y_1,y_2,v)$, $\lnot B(y_1,x_1,v)$, $\lnot B(y_1,x_2,v)$, $\lnot B(y_1,z_2,v)$,
    and $\Sep(x_1,x_2,y_1,v,z_2)$ we see via Observation~\ref{obs:Sep-y} that
    $\Sep(x_1,x_2,y_1,y_2,v,z_2)$. It can be proved with  similar arguments that
    $\Sep(x_1,x_2,y_1,y_2,v,z_1,z_2)$. This, together with $B(y_1,y_2,v)$, $B(v,z_1,z_2)$
    (as previously mentioned), and the arguments in the first paragraph, implies that if $|X_v|, |Y_v|, |Z_v| \ge 2$,
    then the last item holds. The remaining cases where at least one and at most two of $X_v,Y_v,Z_v$ contain
    exactly one element  be handled similarly. 

    The moreover statement follows with very similar arguments as in the proofs of Lemmas~\ref{lem:YZ} and~\ref{lem:XY}.
    As in Lemma~\ref{lem:YZ}, notice that $B(y_1,y_2,v)$ and $B(v,z_1,z_2)$ imply $y_1,y_2,z_1,z_2 \notin N(v)$ 
    via axiom $\UMI$. 
    The extension $f'$ preserves $E$ and $\lnot E$: for vertices in $Y_v\cup Z_y$  mimic the corresponding argument
    in Lemma~\ref{lem:YZ}, and for vertices in $X_v$ see the argument in Lemma~\ref{lem:XY}.
    To see that $f'$ preserves the local betweenness relation, it suffices to verify that it preserves $B$ for
    triples $a,b,c$ with $v\in\{a,b,c\}$ and $b,c\in Y_v\cup Z_v$, and this can be done with exactly
    the same argument as in the proof of Lemma~\ref{lem:YZ}. To see that $f'$ preserves the separation
    relation, imitate (again) the  inductive argument in Lemma~\ref{lem:YZ}, where now the base
    case is given by the assumptions $\Sep(x_1, x_2, y_1, y_2, v, z_1, f'(z_2))$ and
    $\Sep(f'(x_1), f'(x_2), f'(y_1), f'(y_2), v', f'(z_1), f'(z_2))$. We conclude that the extension $f'$
    is an embedding.
\end{proof}

\subsection{Extension axioms}
The aim of the extension axioms (listed below) is to capture the  structure $(\mathbb C_3, B, S)$ up to isomorphism.
Intuitively speaking, the extension   axioms state that  for every partition of $V(\bA')$ 
as described in Lemmas~\ref{lem:V-independent}, \ref{lem:YZ}, \ref{lem:XY}, 
and~\ref{lem:XYZ}
there is
a vertex $v'\in V(\bA')\setminus V(\bA)$ which is \textit{nicely} related to this partition. Then, we 
inductively embed any structure satisfying the universal axioms into a structure satisfying
the universal and extension axioms (Theorem~\ref{thm:extending-emb}).

Similarly as above, all variables are implicitly universally quantified, unless they are explicitly existentially
quantified. Moreover, given  a (negated) atomic formula $\phi$, we use 
$\phi(x_i,\overline y)$ as a shorthand for $\phi(x_1,\overline y) \land \phi(x_2, \overline y)$.
For instance,  $E(x_i,y_1) \land z_i \neq y_j$ says that $y_1$ is not adjacent to neither $x_1$ nor
$x_2$, and $y_1\neq z_1$, $y_1\neq z_2$, $y_2\neq z_1$, and $y_2 \neq z_2$. Finally, recall that 
$\Sep$ is not an atomic formula, but for every tuple of variables $\overline x$, the formula $\Sep(\overline x)$ denotes
a (finite) conjunction of atomic $\{S\}$-formulas where the number of conjuncts depends on the length of $\overline x$.

\vspace{0.4cm}
\noindent \textit{Extension axioms} (illustrated in Figures~\ref{fig:ex-axioms-I} and~\ref{fig:ex-axioms-II}; white (resp.\ filled) vertices depict universally (resp.\ existentially)
quantified vertices,  the betweenness relation $B$ is the usual betweenness relation on the (dotted) line, and the separation relation
is the natural separation relation on the (dotted) circumference.)

\vspace{0.15cm}
  $\EX$ There exists a vertex. 

  \vspace{0.25cm}
 $\EXIa$ If $\lnot E(x_1,x_2)$, then there are $v_1,v_2$ such that $E(v_1,x_i)$ and $\lnot E(v_2,x_i)$.

 $\EXIb$ If $\lnot E(y_1,y_2)$ and $y_1 \neq y_2$, then $\exists v_1,v_2,v_3$ such that $B(v_1,y_1,y_2)$, $B(y_1,v_2,y_2)$, and 
 
 \phantom{$\EXIb$} $B(y_1,y_2,v_3)$.

 $\EXIc$ If $B(x_i,y_1,y_2)\land B(x_1,x_2,y_i)$, then there is a vertex $v$ such that $E(v,x_i)$ and $B(y_1,y_2,v)$.

\vspace{0.25cm}
 $\EXIIa$ If $E(y,z)$, then there is a vertex $v$ such that $\lnot E(v,y)\land \lnot E(v,z)$.
 
  $\EXIIb$ If $E(y,z_2)$, $\lnot E(z_1,z_2)$, then $\exists v$ such that $\lnot E(v,y)$, $\lnot E(v,z_i)$, $B(v,z_1,z_2)$, and $S(y,v,z_1,z_2)$.

 $\EXIIc$ If $S(y_1,y_2,z_1,z_2)$, $\lnot E(y_1,y_2)$, $\lnot E(z_1,z_2)$, and $E(y_1,z_2)$, then $\exists v$ such that $\lnot E(y_i,v)$, 
 
 \phantom{$\EXIIc$}  $\lnot E(z_j,v)$, $B(y_1,y_2,v)$, $B(v,z_1,z_2)$, and $\Sep(y_1,y_2,v,z_1,y_1)$.

\vspace{0.25cm}
$\EXIIIa$ If $E(x,y)$, then there is a vertex $v$ such that $E(v,x)\land \lnot E(v,y)$.

$\EXIIIb$ If $\lnot E(x_1,x_2)$, $E(x_1,y)$, and $x_1\neq x_2$, then $\exists v$ such that $E(v,x_i)$, $\lnot E(x,y)$ and $S(y,v,x_1,x_2)$.

$\EXIIIc$ If $E(x,y_2)$, $\lnot E(y_1,y_2)$, and $y_1\neq y_2$, then $\exists v$ such that $E(v,x)$ and $B(y_1,y_2,v)$.

$\EXIIId$ If $\lnot E(x_1,x_2)$, $\lnot E(y_1,y_2)$,  $E(x_1,y_2)$, $S(x_1,x_2,y_1,y_2)$, then $\exists v$ such that $E(v,x_i)$, $\lnot E(v,y_j)$, 

\phantom{$\EXIIId$} $B(y_1,y_2,v)$, and $\Sep(x_1,x_2,y_1,y_2,v)$.

\vspace{0.25cm}
$\EXIVa$ If $|\{x,y,z\}| = 3$ and $x,y,z$ induce at least one edge, then $\exists v$ such that $E(v,x)$, $\lnot E(v,y)$,

\phantom{$\EXIVa$} $\lnot E(v,z)$ and $S(x,y,v,z)$.

$\EXIVb$ If $S(x_1,x_2,y,z)$, $\lnot E(x_1,x_2)$, and $x_i,y,z$ induce at least one edge, then  $\exists v$ such that

\phantom{$\EXIVb$} $E(v,x_i)$, $\lnot E(v,y)$, $\lnot E(v,z)$, and $\Sep(x_1,x_2,y,v,z)$.

$\EXIVc$ If $S(x,y_1,y_2,z)$, $\lnot E(y_1,y_2)$,  and $x,y_j,z$ induce at one edge, then $\exists v$ such that

\phantom{$\EXIVc$}  $E(v,x)$, $\lnot E(v,z)$, $B(y_1,y_2,v)$, and  $\Sep(x,y_1,y_2,v,z)$.

$\EXIVd$ If $\Sep(x_1,x_2,y_1,y_2,z)$, $\lnot E(x_1,x_2)$, $\lnot E(y_1,y_2)$, and $x_i,y_j,z_k$ induce at least one edge, 

\phantom{$\EXIVd$}  then $\exists v$ such that $E(v,x_i)$, $\lnot E(v,z)$, $B(y_1,y_2,v)$, and $\Sep(x_1,x_2,y_1,y_2,v,z)$.

$\EXIVe$ If  $\Sep(x,y_1,y_2,z_1,z_2)$, $\lnot E(y_1,y_2)$, $\lnot E(z_1,z_2)$, and $x_i,y_j,z_k$ induce at at least one edge,

\phantom{$\EXIVe$} then  $\exists v$ such that $E(v,x)$, $B(y_1,y_2,v)$, $B(v,z_1,z_2)$, and $\Sep(x,y_1,y_2,v,z_1,z_2)$.

$\EXIVf$ If  $\Sep(x_1,x_2,y_1,y_2,z_1,z_2)$, $\lnot E(x_1,x_2)$, $\lnot E(y_1,y_2)$,  $\land \lnot E(z_1,z_2)$, and $x_i,y_j,z_k$
induce at 

\phantom{$\EXIVf$}  least one edge, then $\exists v$ such that $E(v,x_i)$, $B(y_1,y_2,v)$, $B(v,z_1,z_2)$, and \hfill

\phantom{$\EXIVf$} $\Sep(x_1,x_2,y_1,y_2,v,z_1,z_2)$.

\begin{observation}\label{obs:ex-ax}
Similarly to the case of the universal axioms, it follows from the geometric construction that
$(\mathbb C_3, B,S)$ satisfies the listed existential axioms. 
\end{observation}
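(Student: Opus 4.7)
The plan is to verify each extension axiom by a direct geometric check, using the explicit construction of $\mathbb{C}_3$ as a dense subset of $S^1$. The common scheme is: given the universally quantified vertices placed on $S^1$ as prescribed by the hypothesis of the axiom, describe the locus $L \subseteq S^1$ of points $v$ that would witness the existential conclusion, verify that $L$ is a non-empty open subset of $S^1$ (typically a single arc, sometimes a union of finitely many), and invoke density of $V(\mathbb{C}_3)$ in $S^1$ to extract an actual vertex $v \in V(\mathbb{C}_3) \cap L$.

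I would first fix conventions. Viewing $S^1$ as ${\mathbb R}/{\mathbb Z}$, two distinct points $x,y$ are adjacent in $\mathbb{C}_3$ iff the shorter arc between them has length in $(1/3, 1/2]$; the predicate $B(x,y,z)$ holds iff $\{x,y,z\}$ is pairwise at distance at most $1/3$ (so an independent set) and $y$ lies on the short arc from $x$ to $z$; and $S(x_1,x_2,x_3,x_4)$ holds iff, traversing $S^1$ clockwise from $x_1$, the remaining three points appear in the order $x_2,x_3,x_4$ or $x_4,x_3,x_2$. Under these conventions, each atomic formula involving $v$ (or its negation) constrains $v$ to lie in an open arc of $S^1$ whose endpoints depend only on the fixed universal parameters, and each $\Sep$-conjunct fixes the cyclic position of $v$ relative to them. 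Thus verifying an axiom reduces to showing that the intersection of the finitely many arcs prescribed by the existential conclusion is a non-empty open subset of $S^1$.

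I would then go through the axioms group by group. The group $\EX,\EXIa,\EXIb,\EXIc$ is immediate: non-adjacent vertices span a short arc of length at most $1/3$, so the arc diametrically opposite yields a common neighbour and the arc near one of the two points yields a common non-neighbour, and the three disjoint sub-arcs of an independent short arc provide the three witnesses required in $\EXIb$ and $\EXIc$. The groups $\EXIIa$--$\EXIIc$ and $\EXIIIa$--$\EXIIId$ are handled by placing $v$ in the complement of the union of the $1/3$-neighbourhoods of the prescribed "non-neighbours" and intersecting with the $1/3$-neighbourhoods of the prescribed neighbours; each of these intersections is non-empty thanks to the hypotheses $\neg E(z_1,z_2)$, $\neg E(y_1,y_2)$, or $\neg E(x_1,x_2)$, which give the necessary room. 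For the axioms $\EXIVa$--$\EXIVf$, which mix edges, non-edges and separation, the $\Sep$-conjunct singles out a specific arc of $S^1 \setminus \{x_i, y_j, z_k\}$ in which $v$ must lie, and one checks arc by arc that the required edge and betweenness constraints are simultaneously satisfiable in that arc.

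The main obstacle is simply the bookkeeping, since there are roughly eighteen separate axioms, each corresponding to a different cyclic configuration on the circle. The most delicate cases are the highest-arity ones, $\EXIVe$ and $\EXIVf$, where six or seven points are already fixed and $v$ must be inserted at a prescribed cyclic location while simultaneously respecting adjacency to the $x_i$'s, non-adjacency to the $y_j$'s and $z_k$'s, and the two betweenness constraints $B(y_1,y_2,v)$ and $B(v,z_1,z_2)$. In each such case, the prescribed $\Sep$-conjunct forces $v$ into a unique arc determined by the cyclic order, and the strict inequality $>1/3$ in the edge condition together with the hypotheses $\neg E(y_1,y_2)$ and $\neg E(z_1,z_2)$ guarantees that this arc has strictly positive length; density of $V(\mathbb{C}_3)$ in $S^1$ then furnishes the required witness, concluding the verification.
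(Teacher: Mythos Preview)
Your proposal is correct and is in the same spirit as the paper, only far more explicit: the paper gives no proof at all of this observation, simply asserting that the extension axioms hold ``from the geometric construction'' and leaving the verification to the reader. Your scheme of reading each axiom as a finite conjunction of arc constraints on the witness $v$, checking that the resulting intersection is a non-empty open subset of $S^1$, and then invoking density of $V(\mathbb C_3)$ is exactly the intended argument, and your identification of the axioms $\EXIVe$ and $\EXIVf$ as the most delicate cases is apt.

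One small imprecision worth cleaning up if you write this out in full: the set $\{v : \lnot E(v,x)\}$ is not literally an open arc (it includes the point $x$ itself and, in principle, the two antipodal-by-$1/3$ boundary points), so the locus $L$ you describe is a priori only a finite intersection of arcs minus finitely many points. This does not affect the argument, since a non-empty open arc minus finitely many points still meets the dense set $V(\mathbb C_3)$, and moreover the choice of $V(\mathbb C_3)$ at rational angles ensures the boundary case of arc-length exactly $1/3$ never occurs; but it is worth saying once rather than silently assuming openness throughout.
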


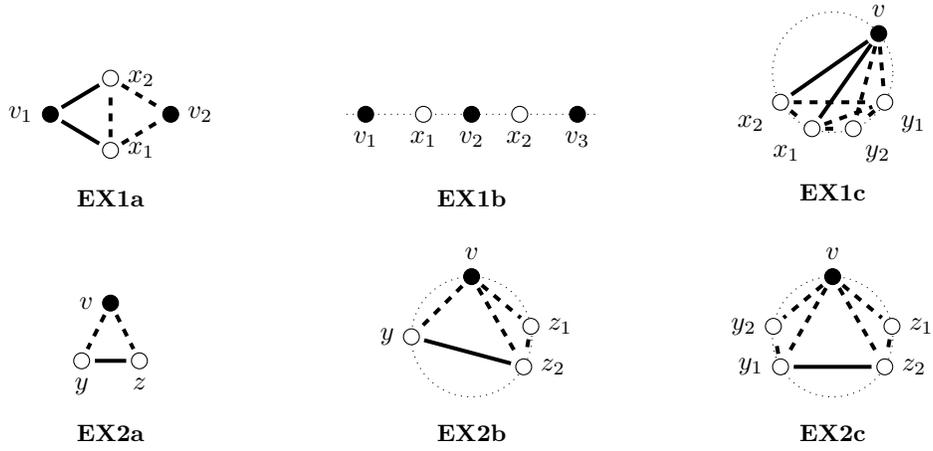
\begin{figure}[ht!]
\centering
\begin{tikzpicture}[scale = 0.8]


  \begin{scope}[scale = 0.8]
    \node (L2) at (0,-1.5)  {\small $\EXIa$ };
        \node [vertex, fill = black, label = left:{$v_1$}] (v1) at (-1.25,0.25) {};
        \node [vertex, fill = black, label = right:{$v_2$}] (v2) at (1.25,0.25) {};
        \node [vertex, label = right:{$x_1$}] (x1) at (0,-0.5) {};
        \node [vertex, label = right:{$x_2$}] (x2) at (0,1) {};

        \foreach \from/\to in {v1/x1, v1/x2} 
        \draw [edge] (\from) to (\to);

        \foreach \from/\to in {x1/x2, x1/v2, x2/v2} 
        \draw [edge, dashed] (\from) to (\to);
  \end{scope}

  \begin{scope}[xshift = 6cm, scale = 0.8]
         \node (L2) at (0,-1.5)  {\small $\EXIb$};
        \draw[dotted]  (-2.6,0.25) to (2.6,0.25);
        \node [vertex, fill = black, label = below:{$v_2$}] (v) at (0,0.25) {};
        \node [vertex, fill = black, label = below:{$v_1$}] (1) at (-2.2,0.25) {};
        \node [vertex, label = below:{$x_1$}] (2) at (-1,0.25) {};
        \node [vertex, label = below:{$x_2$}] (3) at (1,0.25) {};
        \node [vertex, fill = black, label = below:{$v_3$}]  (4) at (2.2,0.25) {};

        
  \end{scope}

   \begin{scope}[xshift = 12cm, yshift = 0.9cm, scale = 0.4]
        \node (L2) at (0,-5)  {\small $\EXIc$};
        \draw [black, dotted] (0,0) circle [radius = 2.5];
        \node [vertex, label = 210:{$x_2$}] (x1) at (210:2.5){};
        \node [vertex, label = 250:{$x_1$}] (x2) at (250:2.5){};
        \node [vertex, fill = black, label = above:{$v$}] (v) at (40:2.5){};
        
        \node [vertex, label = 330:{$y_1$}] (y1)  at (330:2.5){};
        \node [vertex, label = 290:{$y_2$}] (y2) at (290:2.5){};

        \foreach \from/\to in {v/x2, v/x1} 
        \draw [edge] (\from) to (\to);
        \foreach \from/\to in {x1/x2, y1/y2, v/y1, v/y2, x1/y1, x2/y2, x2/y1} 
        \draw [edge, dashed] (\from) to (\to);
        
  \end{scope}


  

  \begin{scope}[yshift = -3.9cm, scale = 0.8]
    \node (L2) at (0,-1.5)  {\small $\EXIIa$ };
        \node [vertex, fill = black, label = left:{$v$}] (v) at (0,1.2) {};
        \node [vertex, label = below:{$y$}] (y) at (-0.6,0) {};
        \node [vertex, label = below:{$z$}] (z) at (0.6,0) {};

        \draw [edge] (y) to (z);

        \foreach \from/\to in {v/z, v/y} 
        \draw [edge, dashed] (\from) to (\to);
  \end{scope}
  
  \begin{scope}[yshift = -3.5cm, xshift = 6cm, scale = 0.4]
        \node (L2) at (0,-4)  {\small $\EXIIb$};
    
        \draw [black, dotted] (0,0) circle [radius = 2.5];
        \node [vertex, label = left:{$y$}] (y) at (180:2.5){}; 
        \node [vertex, fill = black, label = above:{$v$}] (v) at (90:2.5){};
        
        \node [vertex, label = right:{$z_1$}] (z1)  at (10:2.5){};
        \node [vertex, label = right:{$z_2$}] (z2) at (330:2.5){};

        \foreach \from/\to in {y/z2} 
        \draw [edge] (\from) to (\to);
        \foreach \from/\to in {z1/z2, v/y, v/z1, v/z2} 
        \draw [edge, dashed] (\from) to (\to);
  \end{scope}

  \begin{scope}[xshift = 12cm, yshift = -3.5cm, scale = 0.4]
       \node (L2) at (0,-4)  {\small $\EXIIc$};
    
        \draw [black, dotted] (0,0) circle [radius = 2.5];
        \node [vertex, label = left:{$y_1$}] (y1) at (210:2.5){};
        \node [vertex, label = left:{$y_2$}] (y2) at (170:2.5){};
        \node [vertex, fill = black, label = above:{$v$}] (v) at (90:2.5){};
        
        \node [vertex, label = right:{$z_1$}] (z1)  at (10:2.5){};
        \node [vertex, label = right:{$z_2$}] (z2) at (330:2.5){};
        
        \draw [edge] (y1) to (z2);
        \foreach \from/\to in {y1/y2, z1/z2, v/y1, v/y2, v/z1, v/z2} 
        \draw [edge, dashed] (\from) to (\to);
    \end{scope}
 
\end{tikzpicture}
\caption{A depiction of the first six extension axioms. 
}
\label{fig:ex-axioms-I}
\end{figure}


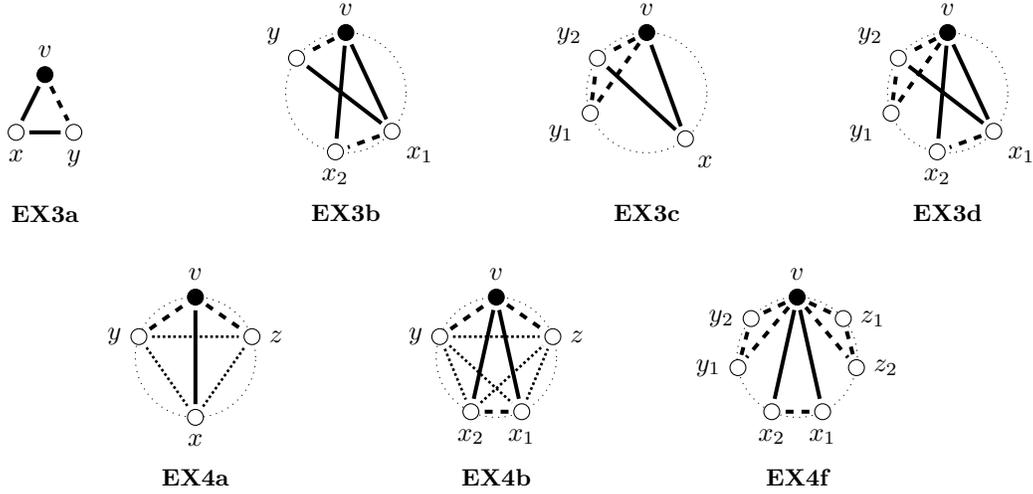
\begin{figure}[ht!]
\centering
\begin{tikzpicture}[scale = 0.8]


  \begin{scope}[scale = 0.8]
    \node (L2) at (0,-1.4)  {\small $\EXIIIa$ };
        \node [vertex, fill = black, label = above:{$v$}] (v) at (0,1.5) {};
        \node [vertex, label = below:{$x$}] (x) at (-0.6,0.3) {};
        \node [vertex, label = below:{$y$}] (y) at (0.6,0.3) {};

        \draw [edge, dashed] (v) to (y);

        \foreach \from/\to in {v/x, x/y} 
        \draw [edge] (\from) to (\to);
  \end{scope}

  \begin{scope}[xshift = 5cm, yshift = 0.9cm, scale = 0.4]
        \node (L2) at (0,-5)  {\small $\EXIIIb$};
        \draw [black, dotted] (0,0) circle [radius = 2.5];
        \node [vertex, label = 310:{$x_1$}] (x1) at (320:2.5){}; 
        \node [vertex, label = below:{$x_2$}] (x2) at (260:2.5){};
        
        \node [vertex, fill = black, label = above:{$v$}] (v) at (90:2.5){};
 
        \node [vertex, label = 145:{$y$}] (y) at (145:2.5){};

        \foreach \from/\to in {x1/y, x1/v, x2/v} 
        \draw [edge] (\from) to (\to);
        \foreach \from/\to in {x1/x2, v/y} 
        \draw [edge, dashed] (\from) to (\to);
        
  \end{scope}

  \begin{scope}[yshift = 0.9cm, xshift = 10cm, scale = 0.4]
        \node (L2) at (0,-5)  {\small $\EXIIIc$};
    
        \draw [black, dotted] (0,0) circle [radius = 2.5];
        \node [vertex, label = 290:{$x$}] (x) at (310:2.5){}; 
        \node [vertex, fill = black, label = above:{$v$}] (v) at (90:2.5){};
        
        \node [vertex, label = 145:{$y_2$}] (y2)  at (145:2.5){};
        \node [vertex, label = 200:{$y_1$}] (y1) at (200:2.5){};

        \foreach \from/\to in {x/y2, x/v} 
        \draw [edge] (\from) to (\to);
        \foreach \from/\to in {y1/y2, v/y1, v/y2} 
        \draw [edge, dashed] (\from) to (\to);
  \end{scope}

  \begin{scope}[xshift = 15cm, yshift = 0.9cm, scale = 0.4]
        \node (L2) at (0,-5)  {\small $\EXIIId$};
        \draw [black, dotted] (0,0) circle [radius = 2.5];
        \node [vertex, label = 310:{$x_1$}] (x1) at (320:2.5){}; 
        \node [vertex, label = below:{$x_2$}] (x2) at (260:2.5){};
        
        \node [vertex, fill = black, label = above:{$v$}] (v) at (90:2.5){};
 
        \node [vertex, label = 145:{$y_2$}] (y2)  at (145:2.5){};
        \node [vertex, label = 200:{$y_1$}] (y1) at (200:2.5){};

        \foreach \from/\to in {x1/y2, x1/v, x2/v} 
        \draw [edge] (\from) to (\to);
        \foreach \from/\to in {x1/x2, y1/y2, v/y1, v/y2} 
        \draw [edge, dashed] (\from) to (\to);
  \end{scope}
  

  \begin{scope}[yshift = -3.5cm, xshift = 2.5cm, scale = 0.4]
        \node (L2) at (0,-5)  {\small $\EXIVa$};
        \draw [black, dotted] (0,0) circle [radius = 2.5];
        \node [vertex, label = below:{$x$}] (x) at (270:2.5){};
        \node [vertex, label = left:{$y$}] (y)  at (160:2.5){};
        \node [vertex, label = right:{$z$}] (z) at (20:2.5){};
    
        \node [vertex, fill = black, label = above:{$v$}] (v) at (90:2.5){};
        
        \foreach \from/\to in {v/z, y/v} 
        \draw [edge, dashed] (\from) to (\to);
        \foreach \from/\to in {x/z, y/x, y/z} 
        \draw [edge, line width = 1pt, densely dotted] (\from) to (\to);
        \draw [edge] (x) to (v);
  \end{scope}
  
  \begin{scope}[yshift = -3.5cm, xshift = 7.5cm, scale = 0.4]
        \node (L2) at (0,-5)  {\small $\EXIVb$};
        \draw [black, dotted] (0,0) circle [radius = 2.5];
        \node [vertex, label = below:{$x_1$}] (x1) at (295:2.5){};
        \node [vertex, label = below:{$x_2$}] (x2) at (245:2.5){};
        \node [vertex, label = left:{$y$}] (y)  at (160:2.5){};
        \node [vertex, label = right:{$z$}] (z) at (20:2.5){};
    
        \node [vertex, fill = black, label = above:{$v$}] (v) at (90:2.5){};
        
        \foreach \from/\to in {v/z, y/v, x1/x2} 
        \draw [edge, dashed] (\from) to (\to);
        \foreach \from/\to in {x1/v, x2/v} 
        \draw [edge] (\from) to (\to);
        \foreach \from/\to in {x1/z, y/x1, x2/z, y/x2, y/z} 
        \draw [edge, line width = 1pt, densely dotted] (\from) to (\to);
  \end{scope}

  \begin{scope}[xshift = 12.5cm, yshift = -3.5cm, scale = 0.4]
        \node (L2) at (0,-5)  {\small $\EXIVf$ };
        \draw [black, dotted] (0,0) circle [radius = 2.5];
        \node [vertex, label = below:{$x_1$}] (x1) at (295:2.5){};
        \node [vertex, label = below:{$x_2$}] (x2) at (245:2.5){};
        \node [vertex, fill = black, label = above:{$v$}] (v) at (90:2.5){};
        
        \node [vertex, label = left:{$y_1$}] (y1)  at (190:2.5){};
        \node [vertex, label = left:{$y_2$}] (y2) at (140:2.5){};

        \node [vertex, label = right:{$z_2$}] (z2) at (-10:2.5){};
        \node [vertex, label = right:{$z_1$}] (z1) at (40:2.5){};
        
        \foreach \from/\to in {v/x2, v/x1} 
        \draw [edge] (\from) to (\to);
        \foreach \from/\to in {x1/x2, z1/z2, y1/y2, v/z1, v/z2, v/y1, v/y2} 
        \draw [edge, dashed] (\from) to (\to);
  \end{scope}

\end{tikzpicture}
\caption{A depiction of seven of the last ten extension axioms: 
Every triple $x,y,z$ (with possible subindices) induces at least
one edge; we represent this in $\EXIVa$, $\EXIVb$ with densely dotted triangles. Since such triangles make the picture of $\EXIVf$
too messy, we choose not to draw them.
}
\label{fig:ex-axioms-II}
\end{figure}

\medskip 

This list of axioms is not minimal, meaning that some are implied by others. We choose to present it including these
logical redundancies so that it becomes evident where these axioms come from: all possible cases in Lemmas~\ref{lem:V-independent},
\ref{lem:YZ}, \ref{lem:XY},  and~\ref{lem:XYZ} (up to symmetry between $Y_v$ and $Z_v$). Also, in this way we avoid extra technical
proofs that provide no new ideas nor interesting insight for the present work. For instance, we claim without a proof that axioms
$\EXIVa, \EXIVb$, and $\EXIVf$, imply axioms $\EXIVc, \EXIVd$, and $\EXIVe$.

\begin{theorem}\label{thm:extending-emb}
    Let $\bA$ and $\bB$ be two countable $\{E,B,S\}$-structures satisfying the universal axioms, and let
    $f \colon \bA' \to \bB$ be an embedding of a finite 
    substructure $\bA'$ of $\bA$. 
    If $\bB$ satisfies the extension axioms, then $f$ can be extended to an embedding from $\bA$ into $\bB$.
\end{theorem}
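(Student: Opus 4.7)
The plan is a forth argument. Enumerate $V(\bA) \setminus V(\bA') = \{a_1, a_2, \ldots\}$ and inductively build a chain of embeddings $f = f_0 \subseteq f_1 \subseteq f_2 \subseteq \cdots$, where $f_i$ embeds into $\bB$ the substructure $\bA_i$ of $\bA$ induced on $V(\bA') \cup \{a_1, \ldots, a_i\}$. The union $\bigcup_{i \geq 0} f_i$ is then the desired embedding of $\bA$ into $\bB$ extending $f$.

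For the inductive step $f_i \to f_{i+1}$, set $v := a_{i+1}$ and invoke Corollary~\ref{cor:def-XYZ} to obtain the partition $V(\bA_i) = X_v \cup Y_v \cup Z_v$. If $V(\bA_i) = \varnothing$, use axiom $\EX$. Otherwise, since each of $X_v, Y_v, Z_v$ is an independent set, every edge in $V(\bA_i)$ must run between two different parts; consequently the empty/non-empty pattern of $(X_v, Y_v, Z_v)$ falls under the hypothesis of exactly one of the structural Lemmas~\ref{lem:V-independent}, \ref{lem:YZ}, \ref{lem:XY}, \ref{lem:XYZ}: the first when $V(\bA_i)$ is independent, the other three when $V(\bA_i)$ contains an edge, according to which of the three parts are non-empty. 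The applicable lemma isolates a short parameter tuple $\bar{u}$ from $V(\bA_i)$ --- the $B$-bounds of the non-empty parts and, in some subcases, single representatives --- together with a conjunction $\phi(\bar{u}, v)$ of (negated) atomic $\{E,B,S\}$-formulas that $v$ satisfies in $\bA$.

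Transporting $\bar{u}$ by $f_i$, the formula $\phi(f_i(\bar{u}), \cdot)$ matches verbatim the antecedent of exactly one of the axioms $\EXIa$--$\EXIc$, $\EXIIa$--$\EXIIc$, $\EXIIIa$--$\EXIIId$, or $\EXIVa$--$\EXIVf$. Applying that axiom in $\bB$ produces some $v' \in V(\bB)$ with $\bB \models \phi(f_i(\bar{u}), v')$, and the ``moreover'' clause of the applicable structural lemma --- engineered precisely for this step --- asserts that $f_{i+1} := f_i \cup \{v \mapsto v'\}$ is an embedding of $\bA_{i+1}$ into $\bB$.

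The only subtle point, which I expect to be the main obstacle, is ensuring $v' \notin f_i[V(\bA_i)]$ so that injectivity is preserved; the extension axioms as stated supply only one witness. The fix is iterative: should the candidate $v'$ coincide with some $f_i(a_j)$, one adjoins $a_j$ to $\bar{u}$ (possibly replacing one of the chosen $B$-bounds) and verifies that the enlarged parameter tuple still fits a --- typically richer --- case of the structural lemmas, whose matching extension axiom in $\bB$ then furnishes a new witness distinct from $f_i(a_j)$. Since $f_i[V(\bA_i)]$ is finite, finitely many such iterations yield a fresh $v'$, and the induction closes.
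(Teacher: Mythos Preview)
Your approach is the same forth construction the paper uses, invoking Lemmas~\ref{lem:V-independent}--\ref{lem:XYZ} for the case analysis and the matching extension axiom for each item.

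The one place you diverge is the ``subtle point'' about injectivity, and here your proposed iterative fix is both unnecessary and, as stated, unworkable. It is unnecessary because the conjunction $\phi(\bar u,v)$ extracted by each lemma already forces $v'\notin f_i[V(\bA_i)]$. In every item the witness $v'$ satisfies either $E(v',f_i(x_j))$ for some $x_j$ in the independent block $X_v$ (so $v'\neq f_i(x)$ for all $x\in X_v$ by looplessness and independence of $f_i[X_v]$), or $B(f_i(y_1),f_i(y_2),v')$ with $\{y_1,y_2\}$ the $B$-bounds of $Y_v$ (so $v'\neq f_i(y)$ for all $y\in Y_v$, since every such $y$ lies between the bounds and the local betweenness axioms forbid $B(y_1,y_2,y)\wedge B(y_1,y,y_2)$), and likewise for $Z_v$. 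Since $V(\bA_i)=X_v\cup Y_v\cup Z_v$, distinctness follows. The fix is unworkable because the parameter tuple $\bar u$ is not freely augmentable: it consists of the $B$-bounds of $X_v,Y_v,Z_v$, which are uniquely determined by Corollary~\ref{cor:order-indep}, so there is no ``richer case'' to pass to by adjoining an arbitrary $a_j$.
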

\begin{proof}
    We construct the embedding inductively. Let $a_1,a_2,\dots$ be an enumeration of $V(\bA)$, and
    let $i \in {\mathbb N}$ be smallest such that $a_i$ is not in the domain $V(\bA')$ of $f$
    (if no such $i$ exists, then $\bA' = \bA$ and $f$ is an embedding from $\bA$ into $\bB$).
    We have to find a vertex $b \in V(\bB)$ such that the extension $f_i$ of $f$ that maps
    $a_i$ to $b$ is an isomorphism between the finite substructure of $\bA$ with vertex set
    $V(\bA')\cup \{a_i\}$ and the finite substructure of $\bB$ with vertex set $V(\bB')\cup \{b\}$.
    The case $|V(\bA')|  \le 2$ follows by either of $\EX$, $\EXIa$, $\EXIb$, $\EXIIa$, or $\EXIIIa$ ---
    from here onward we assume that $|V(\bA')| \ge 3$.

    We first consider the case when $V(\bA')$ is an independent set with respect to $E$. Let $u,w$
    be the $B$-bounds of $V(\bA')$. We proceed by considering the subcases described in
    Lemma~\ref{lem:V-independent}, and in each case we define $b\in V(\bB)$ so that it
    satisfied the same item with respect to $f[V(\bA')]$: if $E(a_i, u)$ and $E(a_i,w)$, let $b\in V(\bB)$ 
    be a neighbour of $f(u)$ and $f(w)$ (axiom $\EXIa$); in the second  item
    of Lemma~\ref{lem:V-independent}, if $B(a_i,u,w)$ (resp.\ $B(u,w,a_i)$), then we use axiom $\EXIb$ applied to 
    $y_1 = f(u)$, $y_2 = f(w)$, and let $b = v_1$ (resp.\ let $b = v_3$); also in the second item, if $B(a,a_i,b)$,
    let $b$ be the vertex  $v_2$ obtained from axiom $\EXIb$ for $y_1 = f(a)$ and $y_2 = (b)$; the last two items
    can be handled similarly  via axiom $\EXIc$.
    The fact that the extension $f_i$ is an embedding follows from the definition of $b$ and the ``morevover''
    statement in Lemma~\ref{lem:V-independent}.
    
    Now, suppose that $\bA'$ contains at least one edge, and consider the partition $(X,Y,Z)$ defined by $a_i$
    on $V(\bA)$ given by Corollary~\ref{cor:def-XYZ}.   We proceed over a series of case distinctions that depend on
    which of these are non-empty sets. Since each of $X,Y,Z$ is independent (with respect to $E$), at most one
    of $X, Y, Z$ is the empty set. The arguments are essentially the same as in the last paragraph, so we only
    mention which lemma from the universal section should be used, and which extension axiom in each item of the 
    corresponding lemma. 

    \begin{itemize}
        \item $X = \varnothing$, $Y\neq \varnothing$, and $Z\neq \varnothing$.  This case corresponds to Lemma~\ref{lem:YZ}.
        In the first and second items find $b\in V(\bB)$ via axiom $\EXIIb$ (the second case is symmetric to the first one
        by permuting $Y$ and $Z$, and relabeling $(y_1,z_1,z_2)\mapsto (z_1,y_2,y_1)$). In the third item find 
        such a vertex $b$ by axiom $\EXIIc$. The moreover statement of Lemma~\ref{lem:YZ} now proves the claim in this case.
        
        \item $X \neq \varnothing$, $Y\neq \varnothing$, and $Z =  \varnothing$.  In this case, we proceed via Lemma~\ref{lem:XY}, where
        in the first item find $b\in V(\bB)$ by axiom $\EXIIIb$, in the second one by axiom $\EXIIIc$, and in the third one by axiom $\EXIIId$.
        The fact that the extension $f_i$ mapping $a_i$ to $b$ is an embedding follows from the moreover statement of Lemma~\ref{lem:XY}.
        The case $X \neq \varnothing$, $Y = \varnothing$, and $Z\neq \varnothing$ can be argued with symmetric arguments.

        \item $X \neq \varnothing$, $Y\neq \varnothing$, and $Z\neq \varnothing$. This final case corresponds to Lemma~\ref{lem:XYZ}. 
        Notice that item 3 and 4 are symmetric (go from 3 to 4 via $(Y,Z,y_1,y_2,z_1)\mapsto (Z,Y,z_2,z_1,z_1)$),
        and so are cases 5 and 6 (go from 5 to 6 via $(Y,Z, y_1,y_2,z_1,z_2)\mapsto (Z,Y, z_2, z_1, y_2, y_1)$). Hence, we only consider
        items 1, 2, 3, 5, 7, and 8, and in each of these find $b\in V(\bB)$ via axioms $\EXIVa$ (item 1), $\EXIVb$ (item 2), $\EXIVc$ (item 3), 
        $\EXIVd$ (item 5), $\EXIVe$ (item 7), and $\EXIVf$ (item 8). In each case, the extension $f_i$ of $f$ defined by $a_i\mapsto b$
        is an embedding due to the moreover statement of Lemma~\ref{lem:XYZ}.
    \qedhere 
    \end{itemize}
\end{proof}

\subsection{Characterization and applications}

All the hard (technical) work of this section has been done above. Now, we build on it to obtain
our main results and applications.

\begin{theorem}\label{thm:extending-iso}
    Let $\bA$ and $\bB$ be two countable $\{E,B,S\}$-structures satisfying the universal axioms and the extension axioms and let $f \colon \bA' \to \bB'$ be an
    isomorphism between a finite 
    substructure $\bA'$ of $\bA$ and a finite substructure $\bB'$ of $\bB$. Then $f$ can be extended to an isomorphism between $\bA$ and $\bB$.
\end{theorem}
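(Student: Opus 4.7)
The plan is to run the standard back-and-forth construction, using Theorem~\ref{thm:extending-emb} (or rather its one-point step) as the engine in both directions. Fix enumerations $V(\bA) = \{a_1, a_2, \dots\}$ and $V(\bB) = \{b_1, b_2, \dots\}$, and build inductively a chain
\[
f = f_0 \subseteq f_1 \subseteq f_2 \subseteq \cdots
\]
of isomorphisms between finite substructures of $\bA$ and $\bB$ such that for each $n \ge 1$ the vertex $a_n$ lies in the domain of $f_{2n-1}$ and the vertex $b_n$ lies in the image of $f_{2n}$. The union $g := \bigcup_{n \ge 0} f_n$ is then a bijection $V(\bA) \to V(\bB)$ that preserves and reflects every atomic relation, and hence an isomorphism $\bA \to \bB$ extending $f$.

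For the forth-step I would isolate from the proof of Theorem~\ref{thm:extending-emb} the following one-point extension: if $g \colon \bC \to \bD$ is an isomorphism between finite substructures $\bC \subseteq \bA$ and $\bD \subseteq \bB$ and $a \in V(\bA) \setminus V(\bC)$, then $g$ extends to an isomorphism from the substructure induced on $V(\bC) \cup \{a\}$ onto the substructure induced on $V(\bD) \cup \{b\}$ for some $b \in V(\bB) \setminus V(\bD)$. This is exactly what the inductive paragraph of the proof of Theorem~\ref{thm:extending-emb} shows: one considers the partition $(X_a, Y_a, Z_a)$ of $V(\bC)$ induced by $a$ via Corollary~\ref{cor:def-XYZ}, matches it to the appropriate case of Lemmas~\ref{lem:V-independent}, \ref{lem:YZ}, \ref{lem:XY}, or \ref{lem:XYZ}, and applies the corresponding extension axiom of $\bB$ to produce $b$; the ``moreover'' parts of those lemmas certify that $g \cup \{(a,b)\}$ is still an embedding, and by construction it is surjective onto $V(\bD) \cup \{b\}$, hence an isomorphism.

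The back-step is completely symmetric: since $\bA$ also satisfies the extension axioms, applying the one-point extension to $g^{-1} \colon \bD \to \bC$ produces, for any prescribed $b \in V(\bB) \setminus V(\bD)$, some $a \in V(\bA) \setminus V(\bC)$ so that $g \cup \{(a,b)\}$ is an isomorphism. Alternating forth-steps (to cover $a_n$ in the domain) and back-steps (to cover $b_n$ in the image) then yields the desired chain, and passing to the union finishes the argument.

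The main obstacle is purely organisational: all of the genuine combinatorial and geometric work has already been packaged into the single-point extension step of Theorem~\ref{thm:extending-emb}, so what remains is only to notice that the hypothesis is symmetric in $\bA$ and $\bB$ and to wrap the usual back-and-forth bookkeeping around it.
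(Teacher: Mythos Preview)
Your proposal is correct and follows exactly the paper's approach: a standard back-and-forth argument powered by the one-point extension step from Theorem~\ref{thm:extending-emb}, using that both $\bA$ and $\bB$ satisfy the extension axioms. The paper's own proof is a two-sentence sketch of precisely this, so you have simply spelled out the bookkeeping.
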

\begin{proof}
    Clearly, the isomorphism $f\colon \bA'\to \bB'$ defines an embedding of $\bA'$ into $\bB$. Thus, the isomorphism
    between $\bA$ and $\bB$ can be constructed via a back-and-forth argument using Theorem~\ref{thm:extending-emb},
    and the assumption that both structures $\bA$, $\bB$ satisfy the universal and the extension axioms. 
\end{proof}

Theorem~\ref{thm:extending-iso} implies that there is a unique countable $\{E,B,S\}$-structure
satisfying all universal and extension axioms, up to isomorphism, and moreover, this structure
is homogeneous. We now observe that $(\mathbb C_3,B,S)$ is such a structure. 

\begin{theorem}\label{thm:ax}
 A countable $\{E,B,S\}$-structure satisfies the universal and extension axioms
 if and only if it is isomorphic to $(\mathbb C_3, B,S)$.
\end{theorem}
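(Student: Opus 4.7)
The plan is to reduce the theorem immediately to the two preparatory observations and the back-and-forth result already proved above. For the forward direction, since first-order satisfaction is invariant under isomorphism, it suffices to check that $(\mathbb C_3,B,S)$ itself satisfies every universal and extension axiom. This is exactly what Observations~\ref{obs:ax} and~\ref{obs:ex-ax} record: each universal axiom can be read off directly from the geometric definitions of $E$, $B$, and $S$ on $V(\mathbb C_3) \subseteq S^1$, and each extension axiom is witnessed by using density of $V(\mathbb C_3)$ in $S^1$ together with the rationality of the angles to produce, for any given configuration of universally quantified vertices, the required vertex $v$.

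For the converse, let $\bA$ be an arbitrary countable $\{E,B,S\}$-structure satisfying the universal and extension axioms, and set $\bB := (\mathbb C_3,B,S)$. By Observations~\ref{obs:ax} and~\ref{obs:ex-ax}, $\bB$ is a countable $\{E,B,S\}$-structure that also satisfies the universal and extension axioms. Let $f$ be the empty map, regarded as an isomorphism between the empty substructures of $\bA$ and $\bB$ (or, if one prefers a non-trivial base case, apply axiom $\EX$ in both $\bA$ and $\bB$ to produce a one-element starting isomorphism). Theorem~\ref{thm:extending-iso} then extends $f$ to an isomorphism $\bA \to (\mathbb C_3,B,S)$, which is exactly what is needed.

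There is no new obstacle in this step: all of the technical difficulty has already been absorbed into Theorem~\ref{thm:extending-emb} and the case analyses of Lemmas~\ref{lem:V-independent}, \ref{lem:YZ}, \ref{lem:XY}, and~\ref{lem:XYZ}, which are precisely what makes the back-and-forth go through for this particular signature. In Fra\"{\i}ss\'e-theoretic terms, the theorem is the statement that $(\mathbb C_3,B,S)$ is the unique countable model of the first-order theory axiomatized by the universal and extension axioms, so that this axiomatization is (countably) categorical and, by the \L{}os-Vaught test, complete; this last fact will be what is actually exploited in subsequent sections to deduce a finite axiomatization of $\Th(\mathbb C_3)$ itself.
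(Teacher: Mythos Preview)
Your proposal is correct and follows essentially the same approach as the paper: invoke Observations~\ref{obs:ax} and~\ref{obs:ex-ax} to see that $(\mathbb C_3,B,S)$ satisfies the axioms, and then apply Theorem~\ref{thm:extending-iso} to conclude that any countable model of the axioms is isomorphic to it. The paper's own proof is a one-line reference to precisely these ingredients; your version simply spells out the back-and-forth base case and adds some contextual commentary.
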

\begin{proof}
    This follows from the fact that $(\mathbb C_3, B,S)$ satisfies these axioms (Observations~\ref{obs:ax}
    and~\ref{obs:ex-ax}) and Theorem~\ref{thm:extending-iso}.
\end{proof}

\begin{corollary}\label{cor:homogeneous}
    $(\mathbb C_3, B,S)$ is a homogeneous structure. 
\end{corollary}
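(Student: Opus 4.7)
The plan is to observe that Corollary~\ref{cor:homogeneous} is essentially an immediate consequence of Theorem~\ref{thm:extending-iso} applied in the special case $\bA = \bB = (\mathbb C_3, B, S)$. First I would recall that $(\mathbb C_3, B, S)$ is a countable $\{E,B,S\}$-structure which satisfies the universal axioms (Observation~\ref{obs:ax}) and the extension axioms (Observation~\ref{obs:ex-ax}).

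Next, given any isomorphism $f \colon \bA' \to \bB'$ between two finite substructures $\bA', \bB'$ of $(\mathbb C_3, B, S)$, I would invoke Theorem~\ref{thm:extending-iso} with both $\bA$ and $\bB$ taken to be $(\mathbb C_3, B, S)$. The hypotheses of that theorem are met because $(\mathbb C_3, B, S)$ is countable and satisfies both families of axioms. Therefore, $f$ extends to an isomorphism $\tilde f \colon (\mathbb C_3, B, S) \to (\mathbb C_3, B, S)$, which is by definition an automorphism of $(\mathbb C_3, B, S)$. Since this is precisely the definition of homogeneity, the corollary follows.

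There is no real obstacle here: all of the technical work has been done in Sections~6.1--6.3, where the universal and extension axioms were introduced, verified to hold in $(\mathbb C_3, B, S)$, and shown to suffice for extending partial isomorphisms via the back-and-forth argument underlying Theorem~\ref{thm:extending-iso}. The corollary is simply the ``one-structure'' specialisation of Theorem~\ref{thm:extending-iso}, so the proof should consist of two or three lines citing those results.
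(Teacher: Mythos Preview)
Your proposal is correct and matches the paper's approach exactly: the paper states the corollary without an explicit proof, having already remarked (in the paragraph preceding Theorem~\ref{thm:ax}) that Theorem~\ref{thm:extending-iso} implies both uniqueness and homogeneity of the countable model of the axioms. Your two-line specialisation of Theorem~\ref{thm:extending-iso} to $\bA = \bB = (\mathbb C_3, B, S)$, together with Observations~\ref{obs:ax} and~\ref{obs:ex-ax}, is precisely what the paper intends.
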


Let $\Phi$ be the (finite!) set of first-order sentences over the signature of graphs which is obtained
from the universal and existential axioms by replacing each occurrence of
$B(x,y,z)$ and $S(x,y,z,w)$ by their existential definitions (Observations~\ref{obs:B-definitions}
and Observation~\ref{obs:S-definitions}, respectively). 

\begin{corollary}\label{cor:ax}
    A countable graph satisfies $\Phi$ if and only if it is isomorphic to ${\mathbb C}_3$; in particular, 
    ${\mathbb C}_3$ is $\omega$-categorical. 
\end{corollary}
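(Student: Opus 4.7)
The plan is to derive this corollary from Theorem~\ref{thm:ax} by a routine translation between the signatures $\{E\}$ and $\{E,B,S\}$. For the easy direction I will check that $\mathbb C_3\models \Phi$: by Observations~\ref{obs:ax} and~\ref{obs:ex-ax}, the expansion $(\mathbb C_3,B,S)$ satisfies the universal and extension axioms, and by Observations~\ref{obs:B-definitions} and~\ref{obs:S-definitions}, in $\mathbb C_3$ the relations $B$ and $S$ are precisely the relations defined by the existential $\{E\}$-formulas used to construct $\Phi$. Hence each axiom translates to a true $\{E\}$-sentence in $\mathbb C_3$, so $\mathbb C_3\models \Phi$, and in particular every countable graph isomorphic to $\mathbb C_3$ models $\Phi$.

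For the nontrivial direction, let $G$ be a countable graph with $G \models \Phi$. I will expand $G$ to an $\{E,B,S\}$-structure $(G,B^G,S^G)$ by interpreting $B$ and $S$ on $V(G)$ via the existential $\{E\}$-formulas of Observations~\ref{obs:B-definitions} and~\ref{obs:S-definitions}. Since $\Phi$ is by construction obtained from the universal and extension axioms by substituting these existential formulas for each atomic occurrence of $B$ and $S$, a routine induction on formula structure (the standard substitution lemma) gives, for every axiom $\phi$ and its translation $\phi^*\in\Phi$, the equivalence $G\models \phi^* \iff (G,B^G,S^G)\models \phi$. Thus $(G,B^G,S^G)$ satisfies the full list of universal and extension axioms, and Theorem~\ref{thm:ax} yields $(G,B^G,S^G)\cong(\mathbb C_3,B,S)$; the underlying isomorphism of graphs gives $G\cong \mathbb C_3$.

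Finally, for $\omega$-categoricity, since $\mathbb C_3\models \Phi$ we have $\Phi\subseteq \Th(\mathbb C_3)$, so every countable model of $\Th(\mathbb C_3)$ is in particular a countable model of $\Phi$, and is therefore isomorphic to $\mathbb C_3$ by the previous paragraph. Hence $\Th(\mathbb C_3)$ has a unique countable model up to isomorphism, i.e., $\mathbb C_3$ is $\omega$-categorical. The only genuinely technical point in this plan is the substitution-lemma step used in the nontrivial direction, and it works cleanly precisely because the intended interpretations of $B$ and $S$ in $G$ were chosen to be existential (rather than case-dependent) $\{E\}$-formulas, so that the substitution can be performed uniformly at every atomic occurrence of $B$ and $S$ inside the axioms.
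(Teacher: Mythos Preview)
Your argument is correct and is precisely the natural route the paper has in mind: the corollary is stated without proof because it follows from Theorem~\ref{thm:ax} via the standard definitional-expansion (substitution) lemma, exactly as you outline. One minor remark: your closing justification that the substitution works ``because the interpretations were chosen to be existential'' is beside the point---the substitution lemma holds for \emph{any} fixed $\{E\}$-formulas $\beta$ and $\sigma$ used both to form $\Phi$ and to interpret $B^G,S^G$, regardless of their quantifier complexity.
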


\begin{corollary}\label{cor:mt}
    The structure ${\mathbb C}_3$ is a \emph{model-complete core}, i.e., for every finite $F \subseteq V({\mathbb C}_3)$ and every
    endomorphism $e$ of ${\mathbb C}_3$ there exists an automorphism $a$ of
    ${\mathbb C}_3$ such that $e|_F = a|_F$.\footnote{Equivalently, a structure is a model-complete core if every first-order formula is equivalent to an existential positive formula; see~\cite{Book}.}
\end{corollary}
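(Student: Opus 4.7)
My aim is to show that every endomorphism $e$ of $\mathbb{C}_3$ is in fact an embedding of the expansion $(\mathbb{C}_3, B, S)$ into itself; by homogeneity of $(\mathbb{C}_3, B, S)$ (Corollary~\ref{cor:homogeneous}), the restriction $e|_F$ to any finite $F$ then extends to an automorphism of $(\mathbb{C}_3, B, S)$, which is in particular an automorphism of $\mathbb{C}_3$ agreeing with $e$ on $F$.

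The first and crucial step is to show that $e$ preserves $\lnot E$. Suppose for contradiction that $ab \notin E$ but $e(a)e(b) \in E$. The extension axiom $\EXIa$ supplies a common neighbour $c$ of $a$ and $b$; since $e$ is a homomorphism, $e(c)$ is then adjacent to both $e(a)$ and $e(b)$, so $\{e(a), e(b), e(c)\}$ induces a triangle in $\mathbb{C}_3$ (the three images are distinct because $\mathbb{C}_3$ is loopless by $\UEI$), contradicting triangle-freeness $\UEII$. A similar argument gives injectivity of $e$: if $e(a) = e(b)$ for distinct $a, b$, then $ab \notin E$ (else a loop arises), and using that $\mathbb{C}_3$ is point-distinguishing --- which follows from the isomorphism with the geometric model guaranteed by Theorem~\ref{thm:ax} --- one finds $z$ with, say, $az \in E$ and $bz \notin E$; then $e(z)e(a) \in E$ together with $e(a)=e(b)$ forces $e(z)e(b) \in E$, reducing to the previous case on the non-edge $bz$.

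Once $e$ is known to be a graph embedding, I observe that both $B$ and $\lnot B$ are \emph{existentially} definable in $\mathbb{C}_3$: the existential definition of $B$ is given directly by Observation~\ref{obs:B-definitions}, while negating the universal definition of $B$ yields an existential definition of $\lnot B$; the analogous statement for $S$ and $\lnot S$ follows from Observation~\ref{obs:S-definitions}. Since embeddings preserve existential formulas, $e$ therefore preserves $B$, $\lnot B$, $S$, and $\lnot S$, hence is an embedding of $(\mathbb{C}_3, B, S)$ and the plan is complete. The only delicate point I foresee is confirming the point-distinguishing property cleanly; I expect this to be handled either by a direct axiomatic argument using $\EXIIIa$ and $\EXIIIb$, or more expediently by transferring it from the geometric model via the uniqueness statement of Theorem~\ref{thm:ax}.
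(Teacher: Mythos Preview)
Your argument is correct and is essentially an unfolding of the paper's proof. The paper invokes a general black-box result (\cite[Theorem 4.5.1]{Book}): any structure admitting a homogeneous expansion by relations that are simultaneously existentially positively and universally negatively definable is a model-complete core. It then verifies these hypotheses for $(\mathbb{C}_3,B,S)$ using exactly the ingredients you identified --- Observations~\ref{obs:B-definitions} and~\ref{obs:S-definitions} for $B$ and $S$, the formula $\exists z\,(E(x,z)\wedge E(y,z))$ as an existential positive definition of $\lnot E(x,y)$ (your triangle argument), and $\exists z\,(E(z,x)\wedge \lnot E(z,y))$ for $x\neq y$ (your point-distinguishing step). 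What you do is prove directly the specific consequence of that theorem needed here: every endomorphism is an $\{E,B,S\}$-embedding, and then homogeneity finishes. The gain of your route is self-containment; the paper's route makes clear that the phenomenon is an instance of a general principle. Regarding your flagged ``delicate point'': the paper handles it exactly as you anticipate, giving the explicit existential positive definition of $\neq$ and implicitly relying on the geometric model (via Theorem~\ref{thm:ax}) for its validity, so no extra work is needed there.
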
 
\begin{proof}
    Every structure $\bA$ with a homogeneous expansion whose relations are 
    existentially positively and universally negatively definable  in $\bA$ is a model-complete core~\cite[Theorem 4.5.1]{Book}.
    Corollary~\ref{cor:homogeneous} asserts that $({\mathbb C}_3, B, S)$ is a homogeneous expansion of $\mathbb C_3$,
    and Observation~\ref{obs:B-definitions} implies that $B$ is existentially and universally definable in $\mathbb C_3$.
    To see that it is existentially positive and universally negatively definable, first   notice that $\exists z (E(x,z)\land E(y,z))$ defines
    $\lnot E(x,y)$ in $\mathbb C_3$, while $\forall z (\lnot E(x,z)\lor
    \lnot E(y,z))$ defines $E(x,y)$. Second, in both definitions of $B$ and $S$ we implicitly use $x\neq y$ (which is a universal negative formula), and
    it can be existentially positively defined by $\exists z (E(z,x) \land \lnot E(z,y))$ (composing with the existential positive definition of $\lnot E$).
    Using this together with Observation~\ref{obs:S-definitions} we conclude that $S$ is 
    existentially positive and universally negative definable in $\mathbb C_3$.
\end{proof}

\begin{corollary}
    Every countable $\{K_3,~K_1+2K_2,~K_1+C_5,~C_6\}$-free graph embeds into ${\mathbb C}_3$.
\end{corollary}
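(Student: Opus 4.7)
The plan is to bootstrap the countable case from the finite characterization (Theorem~\ref{thm:ageC3}) via the $\omega$-categoricity of $\mathbb{C}_3$ (Corollary~\ref{cor:ax}), by a textbook compactness argument. The payoff of having built a first-order axiomatization is exactly that it lets us replace a Fra\"iss\'e-style back-and-forth on $\mathbb{C}_3$ itself (which is not homogeneous as a graph!) by a one-line appeal to compactness.

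Given a countable $\{K_3, K_1+2K_2, K_1+C_5, C_6\}$-free graph $G$, expand the signature to $\{E\} \cup \{c_v : v \in V(G)\}$ by adjoining a fresh constant symbol for each vertex of $G$, and form the theory $T$ consisting of the axiomatization $\Phi$ of $\Th(\mathbb{C}_3)$ together with the atomic diagram of $G$, i.e., the sentences $c_u \neq c_v$ for each pair of distinct vertices, $E(c_u,c_v)$ for each edge $uv$ of $G$, and $\neg E(c_u,c_v)$ for each pair of distinct non-adjacent vertices.

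The crucial step is that $T$ is finitely satisfiable. Any finite $T_0 \subseteq T$ mentions only finitely many constants $c_{v_1},\dots,c_{v_n}$; the induced subgraph $G[\{v_1,\dots,v_n\}]$ is finite and $\{K_3, K_1+2K_2, K_1+C_5, C_6\}$-free, so by Theorem~\ref{thm:ageC3} it embeds into $\mathbb{C}_3$, and interpreting each $c_{v_i}$ as its image under this embedding turns $\mathbb{C}_3$ into a model of $T_0$. Applying compactness followed by downward L\"owenheim--Skolem produces a countable model $\mathcal{M}$ of $T$. Its $\{E\}$-reduct is a countable graph satisfying $\Phi$, hence isomorphic to $\mathbb{C}_3$ by Corollary~\ref{cor:ax}. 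Composing the assignment $v \mapsto c_v^{\mathcal{M}}$ with this isomorphism yields an injective, adjacency-preserving-and-reflecting map $V(G) \to V(\mathbb{C}_3)$, that is, an embedding $G \hookrightarrow \mathbb{C}_3$.

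No serious obstacle arises, since the real work has already been done in Theorem~\ref{thm:ageC3} and in the axiomatization culminating in Corollary~\ref{cor:ax}; the only thing to check is finite satisfiability of $T$, which is immediate from the finite age characterization. An alternative proof would construct the embedding directly by an inductive back-and-forth using Theorem~\ref{thm:extending-emb}, but doing so would first require equipping $G$ with consistent $B$- and $S$-relations that satisfy the universal axioms — a task that the compactness route elegantly sidesteps.
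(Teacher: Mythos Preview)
Your proof is correct and follows essentially the same approach as the paper: the paper's proof simply says the result follows from Theorem~\ref{thm:ageC3} and the $\omega$-categoricity of $\mathbb{C}_3$ (Corollary~\ref{cor:ax}) via a compactness argument, and you have spelled out precisely that compactness argument (atomic diagram, finite satisfiability from the age characterization, downward L\"owenheim--Skolem, then $\omega$-categoricity to identify the reduct with $\mathbb{C}_3$).
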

\begin{proof}
    Follows from the fact that $\Age({\mathbb C}_3)$ equals the class of all finite $\{K_3,~K_1+2K_2,~K_1+C_5,~C_6\}$-free graphs (Theorem~\ref{thm:ageC3}) and the $\omega$-categoricity of ${\mathbb C}_3$ (Corollary~\ref{cor:ax}) 
    via a compactness argument (see, e.g.,~\cite[Lemma 4.1.7]{Book}). 
\end{proof}

\section{On the hardness of testing $\chi_c(G) < 3$}
\label{sec:complexity}
In this section we show that 
the problem of deciding
$\chi_c(G) < 3$ for a given finite graph $G$ is NP-hard, answering a question that has been asked in~\cite{guzmanAMC438}.
We present two different NP-hardness proofs, which are both interesting from a theoretic perspective, as we will explain below. 

The first proof is by a reduction from a known NP-hard \emph{promise constraint satisfaction problem}, namely $\Pcsp(C_5,K_3)$~\cite[Theorem 6.7]{BartoBKO21}. 
For graphs (and more generally, for structures) $G$ and $H$ such that $G \to H$ the computational problem
$\Pcsp(G,H)$ is defined as follows. For a given finite input graph $I$, output `Yes' if $I \to C_5$, and output `No' if $I \not\to H$. If $I$ is such that $I \not\to C_5$ and $I \to H$,
then the answer can be arbitrary. Note that $\Pcsp(H,H)$ is the same as $\Csp(H)$. 
An important technique to solve $\Pcsp(G,H)$ in polynomial time (sometimes called the \emph{sandwich technique}) is to identify a graph $S$ such that $G \to S \to H$ and $\Csp(S)$ can be solved in polynomial time. In this case, $\Pcsp(G,H)$ is in P by the trivial reduction, by simply returning the answer given by the algorithm for $\Csp(S)$. 
It has been conjectured  in~\cite{BrakensiekGuruswami19} that if $\Pcsp(G,H)$ is in P, then there is a (possibly infinite!) graph $S$ such that $\Csp(S)$ is in P; in other words, it has been conjectured that the sandwich technique is necessary and sufficient for polynomial-time tractability of finite-domain PCSPs. 
It is known that infinite domains are necessary here: 
there are examples of finite structures $\mathbb A \to \mathbb B$ such that every structure $\mathbb S$ such that $\mathbb A \to \mathbb S \to \mathbb B$ and $\Csp({\mathbb S}) \in P$ must be infinite (unless P=NP)~\cite{BartoBKO21}. 

Here, we use the sandwich technique in the other direction, to prove the NP-hardness of $\Csp(G)$ for an infinite graph $G$ 
(we are not aware of a previous application of the connection between PCSPs and infinite-domain CSPs in this direction in the literature). 

\begin{theorem}\label{thm:NP-hard}
    $\Csp({\mathbb C}_3)$ is NP-complete.
\end{theorem}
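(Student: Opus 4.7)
My plan is to prove NP-membership and NP-hardness separately. For NP-membership I would invoke Corollary~\ref{cor:circular<3}: a finite graph $G$ satisfies $G \to \mathbb{C}_3$ if and only if $G$ can be extended to a $\{K_3, K_1+2K_2, K_1+C_5, C_6\}$-free graph on the same vertex set. A nondeterministic machine then guesses an edge set $E' \supseteq E(G)$ and verifies in polynomial time that $(V(G), E')$ avoids each of the four (constant-sized) forbidden induced subgraphs.

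For NP-hardness, the strategy announced in the excerpt is to reduce from the known NP-hard promise problem $\Pcsp(C_5, K_3)$ via the sandwich $C_5 \to \mathbb{C}_3 \to K_3$. The first homomorphism follows from Theorem~\ref{thm:ageC3}: $C_5$ is $\{K_3, K_1+2K_2, K_1+C_5, C_6\}$-free, hence embeds into $\mathbb{C}_3$, and an embedding is in particular a homomorphism. The second follows directly from the geometric construction of $\mathbb{C}_3$: partitioning $S^1$ into three half-open arcs of length $1/3$ yields a proper $3$-coloring, since two vertices in the same arc lie at circular distance at most $1/3$ and are thus non-adjacent in $\mathbb{C}_3$. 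The reduction itself is the identity map: if $I \to C_5$ then $I \to \mathbb{C}_3$ by composition, while if $I \not\to K_3$ then $I \not\to \mathbb{C}_3$ by the contrapositive of $\mathbb{C}_3 \to K_3$. Consequently, any algorithm for $\Csp(\mathbb{C}_3)$ correctly answers $\Pcsp(C_5,K_3)$, which is NP-hard by~\cite[Theorem 6.7]{BartoBKO21}.

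I do not anticipate a serious obstacle: all the heavy lifting has already been done in the finite-obstruction characterization of $\Age(\mathbb{C}_3)$ (which simultaneously supplies the polynomial-size NP certificate and the fact $C_5 \to \mathbb{C}_3$) and in the geometric definition of $\mathbb{C}_3$ (which yields $\mathbb{C}_3 \to K_3$). The only point that requires a little care is emphasizing that $\Pcsp(C_5, K_3)$ is a \emph{promise} problem: what we actually exploit is the weaker requirement that any decision procedure for $\Csp(\mathbb{C}_3)$ must answer ``yes'' on every $I$ with $I \to C_5$ and ``no'' on every $I$ with $I \not\to K_3$, while the reduction makes no claim about inputs falling in the promise gap between these two cases.
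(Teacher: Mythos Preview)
Your proof is correct and follows essentially the same approach as the paper: NP-membership via the finite-obstruction characterization (the paper cites Theorem~\ref{thm:equivalent-extensions} rather than Corollary~\ref{cor:circular<3}, but the relevant equivalence is identical), and NP-hardness via the sandwich $C_5 \to \mathbb{C}_3 \to K_3$ reducing from $\Pcsp(C_5,K_3)$. The paper simply asserts both homomorphisms as ``straightforward'' whereas you spell them out, but the argument is the same.
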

\begin{proof} It is straightforward to observe that $C_5$ has a homomorphism to ${\mathbb C}_3$, and that
${\mathbb C}_3$ has a homomorphism to $K_3$. 
The NP-hardness thus follows from the NP-hardness of $\Pcsp(C_5,K_3)$ proved in~\cite{BartoBKO21},
and containment in NP follows from the equivalence between the second and third item in
Theorem~\ref{thm:equivalent-extensions}.
\end{proof}

This hardness proof is interesting from the perspective of infinite-domain constraint satisfaction.
We have seen in Section~\ref{sec:expansion} that ${\mathbb C_3}$ can be expanded by first-order definable relations so
that the expanded structure is homogeneous.
Moreover, it is easy to see that a structure is finitely bounded and has a finite first-order expansion to a homogeneous structure, then 
the expansion is finitely bounded as well (this follows easily from the fact that homogeneous structures with a finite relational signature have quantifier elimination). 
Since the age of ${\mathbb C}_3$ is finitely bounded by 
the main results of the present paper (Theorem~\ref{thm:ageC3}), we can therefore conclude that 
$\Csp({\mathbb C}_3)$ falls into the scope of the \emph{tractability conjecture} from~\cite{BPP-projective-homomorphisms}, which states
that $\Csp({\mathbb C}_3)$ is in P unless it pp-constructs $K_3$, in which case it is NP-hard (the conjecture originally has been phrased
differently, but is equivalent to the form stated above by a result from~\cite{BKOPP}). 
We will present a graph-theoretic description of the notion of pp-constructability below (Definition~\ref{def:pp-construct}). 

For all known NP-hard CSPs that fall into the scope of the tractability conjecture, a pp-construction of $K_3$ is known (for the special case of finite structures, this is a consequence of the dichotomy theorem of Bulatov~\cite{BulatovFVConjecture} and of Zhuk~\cite{ZhukFVConjecture,Zhuk20}). 
Our NP-hardness proof based on the reduction from $\Pcsp(C_5,K_3)$ is different in that it does not give rise to a pp-construction of $K_3$. 

\begin{definition}\label{def:pp-construct}
    Let $G$ be a (not necessarily finite) graph. Then a graph $H$ has a \emph{primitive positive (pp) construction} in $G$ if there exist
    \begin{itemize}
        \item $d \in {\mathbb N}$ (the \emph{dimension} of the construction),
        \item a finite graph $K$, and
        \item $2d$ distinguished vertices $a_1,\dots,a_d,b_1,\dots,b_d \in V(K)$ (not necessarily distinct)   
    \end{itemize}
    such that $H$ is homomorphically equivalent to the graph with vertex set $V(G)^d$ where $u = (u_1,\dots,u_d)$ and $v = (v_1,\dots,v_d)$ are adjacent if and only if there exists a homomorphism from $K$ to $G$ that maps $a = (a_1,\dots,a_d)$ to $u$ and $b = (b_1,\dots,b_d)$ to $v$. 
\end{definition}


Primitive positive constructions are defined analogously for general relational structures, see~\cite{wonderland}.
In particular,  primitive positive constructability is transitive and if a graph (structure) $H$ has a pp-construction in 
a graph (structure) $G$, then there is a logspace reduction from $\Csp(H)$ to $\Csp(G)$ (see~\cite{wonderland}; also see~\cite{maximal-digraphs,smooth-digraphs} for structural results about graphs ordered by pp-constructability). Primitive positive constructibility has been generalised to PCSPs in~\cite{BartoBKO21}. 
Again, if  $(G,H)$ pp-constructs $(G',H')$, then 
there is a polynomial-time reduction from $\Pcsp(G',H')$ to $\Pcsp(G,H)$. 
In particular, if $(G,H)$ pp-constructs $(K_3,K_3)$, then
there is a polynomial-time reduction from $\Pcsp(G,H)$ to $\Pcsp(K_3,K_3) = \Csp(K_3)$.

We mention that the proof of the hardness of $\Pcsp(C_5,K_3)$ presented in~\cite{BartoBKO21}
(answering a question from~\cite{BrakensiekGuruswami18}) is more sophisticated than a pp-construction
of $\Csp(K_3)$, but rather builds on a strengthening of Lov\'asz's theorem on the chromatic number of Kneser graphs.  
However, in our particular case, we are able to come up with a pp-construction of $K_3$ in ${\mathbb C}_3$.

\begin{theorem}
    There is a pp-construction of $K_3$ in ${\mathbb C}_3$. 
\end{theorem}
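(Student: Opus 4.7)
The plan is to exhibit a finite gadget graph $K$ and distinguished tuples $\bar a, \bar b \in V(K)^d$ whose associated pp-power (per Definition~\ref{def:pp-construct}) on $V(\mathbb C_3)^d$ is homomorphically equivalent to $K_3$. I need to verify two properties of the resulting graph: it contains a triangle (giving $K_3 \to \text{pp-power}$), and it admits a homomorphism to $K_3$ (i.e., is $3$-colorable). For $3$-colorability I will leverage the natural homomorphism $h \colon \mathbb C_3 \to K_3$ given by the $3$-arc partition of $S^1$ (already used in the proof of Theorem~\ref{thm:NP-hard}); extended coordinate-wise, $h$ maps the pp-power of $\mathbb C_3$ into the pp-power of $K_3$, so it suffices to design $K$ so that the $K_3$-pp-power is itself loop-free and $3$-colorable. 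For the triangle, I will explicitly place $3d$ points on the circle and verify that three cyclically glued copies of $K$ embed simultaneously into $\mathbb C_3$; this is not obstructed by $\mathbb C_3$ being triangle-free, because the macro-triangle lives in the pp-power rather than in $\mathbb C_3$ itself.

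Concretely, my first attempt is dimension $d = 2$ with the ``crossing'' formula
\[
\phi((u_1,u_2),(v_1,v_2)) \;:=\; E(u_1,v_2) \wedge E(u_2,v_1) \wedge \lnot E(u_1,v_1) \wedge \lnot E(u_2,v_2) \wedge (u_1 \neq v_1) \wedge (u_2 \neq v_2),
\]
where the negated atoms and disequalities are unfolded into primitive positive form using the positive existential definitions from Corollary~\ref{cor:mt}. An explicit triangle in the pp-power is provided by $A = (0,\tfrac12)$, $B = (\tfrac{1}{10}, \tfrac{11}{20})$, $C = (\tfrac{9}{10}, \tfrac{9}{20})$ in $V(\mathbb C_3)^2$: a short arc-length computation shows that all six ``cross'' pairs of coordinates have circular distance strictly greater than $1/3$, all six ``parallel'' pairs have distance at most $1/3$, and all required disequalities hold, so $\phi(A,B)$, $\phi(B,C)$, and $\phi(A,C)$ all hold in $\mathbb C_3$.

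The main obstacle is that this naive formula does \emph{not} yet yield a $3$-colorable pp-power: since $V(\mathbb C_3)$ is dense in $S^1$, one can pack arbitrarily many first-coordinates into a small arc near $0$ together with second-coordinates near $1/2$, producing arbitrarily large cliques $K_n$ in the pp-power and thus infinite chromatic number. The remedy is to add further pp-definable constraints involving the separation relation $S$ from Section~\ref{sec:expansion}, which discretise the admissible macro-vertex configurations and force the members of any clique to occupy three genuinely distinct ``thirds'' of the circle in a rotation-equivariant way. The homogeneity and finite axiomatization of $(\mathbb C_3, B, S)$ (Theorem~\ref{thm:ax}) provide the control needed to design these additional constraints and verify that the refined pp-power still contains the triangle exhibited above, is $3$-colorable via the arc coloring, and projects loop-free into the corresponding pp-power of $K_3$---completing the pp-construction.
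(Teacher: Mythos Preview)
Your proposal has a genuine gap: you correctly diagnose that the naive dimension-$2$ crossing formula fails (the pp-power has unbounded clique number), but the ``remedy'' you describe is not actually carried out. Saying that one should add separation-relation constraints that ``discretise the admissible macro-vertex configurations and force the members of any clique to occupy three genuinely distinct thirds of the circle'' is a hope, not a construction. You give no concrete refined formula, no verification that the explicit triangle $A,B,C$ survives the added constraints, and no argument that the refined pp-power is $3$-colourable (the coordinate-wise push through $h\colon\mathbb C_3\to K_3$ only helps once you know the $K_3$-pp-power of your \emph{final} gadget is loop-free and $3$-colourable, which you never check). As written, the proposal is a failed first attempt followed by an unexecuted plan.

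The paper's proof is completely different and avoids the gadget-engineering entirely. It exploits two structural facts: first, because $\mathbb C_3$ is a model-complete core (Corollary~\ref{cor:mt}), the expansion of $\mathbb C_3$ by a constant $c$ is already pp-constructible in $\mathbb C_3$; second, the betweenness relation $B$ has a primitive positive definition in $\mathbb C_3$, and restricted to the neighbourhood $N(c)$ it yields a copy of $(\mathbb Q,\Bet)$. Hence $\mathbb C_3$ pp-constructs $(\mathbb Q,\Bet)$, and one then cites the classical pp-construction of $K_3$ in $(\mathbb Q,\Bet)$ and composes. The key move you are missing is this factoring through $(\mathbb Q,\Bet)$ via the model-complete-core-plus-constant trick, which sidesteps any need to control chromatic numbers of ad hoc pp-powers.
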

\begin{proof}[Proof sketch]
    Since the structure ${\mathbb C}_3$ is a model-complete core (Corollary~\ref{cor:mt}), the expansion of 
    ${\mathbb C}_3$ by a constant $c \in V({\mathbb C}_3)$ is pp-constructible in ${\mathbb C}_3$ (see~\cite{wonderland}). 
    Recall that the ternary relation $B$ has an existential positive definition on $\mathbb C_3$ (see the proof of
    Corollary~\ref{cor:mt}). Moreover, this definition has no disjunctions, i.e., it is primitive positive. Notice that the neighbourhood $N(v)$ of any vertex $v$, together with the restriction of $B$ to
    $N(v)$, is isomorphic to $({\mathbb Q},\Bet)$,  where
    $\Bet$ is the usual betweenness relation, 
    $$ \Bet = \{(a,b,c) \mid (a<b<c) \vee (c <b<a) \}.$$
    It thus follows that $\mathbb C_3$ pp-constructs $(\mathbb Q, \Bet)$. 
    Finally, it is well-known that $K_3$ has a pp-construction in $({\mathbb Q},\Bet)$ (see~\cite{Book}). 
    Composing pp-constructions, we obtain a pp-construction of $K_3$ in ${\mathbb C}_3$. 
\end{proof}

We leave the following as an open problem. 

\begin{conjecture}\label{conj:K3ppconstruct}
    Suppose that $G$ and $H$ are finite graphs and $S$ is a countably infinite graph which is a reduct of a finitely bounded homogeneous structure such that 
    $G \to S \to H$. If $\Pcsp(G,H)$ is NP-hard, then $K_3$ has a pp-construction in $S$. 
\end{conjecture}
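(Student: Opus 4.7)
The plan is to deduce Conjecture~\ref{conj:K3ppconstruct} from a ``sandwich reduction'' together with the tractability conjecture for reducts of finitely bounded homogeneous structures~\cite[Conjecture 1.2]{BPP-projective-homomorphisms}. In this sense the conjecture is a structural bridge between the finite-domain PCSP dichotomy program and the infinite-domain CSP dichotomy program, and the natural proof strategy is to move the hardness from $\Pcsp(G,H)$ down to $\Csp(S)$ and then apply the infinite-domain conjecture.

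The first step is the sandwich reduction: if $G\to S\to H$, then $\Pcsp(G,H)$ reduces in polynomial time to $\Csp(S)$. On input a finite graph $I$, call a hypothetical oracle for $\Csp(S)$. If the oracle answers ``yes,'' composing a homomorphism $I\to S$ with the fixed $S\to H$ gives $I\to H$, certifying a yes-instance of $\Pcsp(G,H)$. If the oracle answers ``no,'' then $I\not\to G$ as well, for otherwise a homomorphism $I\to G$ composed with $G\to S$ would contradict $I\not\to S$; hence $I$ is a no-instance. Therefore NP-hardness of $\Pcsp(G,H)$ transfers to NP-hardness of $\Csp(S)$.

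The second step is to invoke the tractability conjecture: since $S$ is a reduct of a finitely bounded homogeneous structure, the conjecture predicts that $\Csp(S)$ is NP-hard if and only if $K_3$ has a pp-construction in $S$. Combining with the previous step yields Conjecture~\ref{conj:K3ppconstruct}.

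The main obstacle is of course that the tractability conjecture is itself open, so this route only gives a \emph{conditional} proof. An unconditional attack would have to extract the pp-construction of $K_3$ directly from the NP-hardness of $\Pcsp(G,H)$, presumably via the polymorphism clone of $S$: by the algebraic theory underlying~\cite{BPP-projective-homomorphisms}, $S$ fails to pp-construct $K_3$ precisely when its polymorphism clone admits a uniformly continuous minor-preserving map (i.e., a height-$1$ clone homomorphism) to the clone of projections; the task is then to show that any such map would, via the sandwich $G\to S\to H$, produce minor-preserving maps from the PCSP polymorphism minion $\mathrm{Pol}(G,H)$ to the projections, contradicting the algebraic characterization of hardness for PCSPs. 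Carrying out this transport — dealing simultaneously with the topological structure on $\mathrm{Pol}(S)$ and the lack of homogeneity in $\mathrm{Pol}(G,H)$ — is the genuinely hard content, and is essentially as difficult as the infinite-domain tractability conjecture itself.
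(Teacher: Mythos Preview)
The statement you are attempting to prove is labeled a \emph{conjecture} in the paper and is explicitly left open; the paper does not contain a proof. The only remark the paper makes is the single sentence ``Note that if P is different from NP, then the tractability conjecture mentioned above implies Conjecture~\ref{conj:K3ppconstruct}.'' Your conditional argument via the sandwich reduction followed by an appeal to the tractability conjecture is exactly this observation, spelled out in more detail, and you correctly flag that it is only conditional. So there is no divergence of approach to discuss: you and the paper agree both on the argument and on its status.

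Two technical slips are worth fixing. First, your phrasing ``the conjecture predicts that $\Csp(S)$ is NP-hard if and only if $K_3$ has a pp-construction in $S$'' silently absorbs the assumption $\text{P}\neq\text{NP}$: the tractability conjecture only says that $\Csp(S)$ is in P or $S$ pp-constructs $K_3$, so passing from ``$\Csp(S)$ is NP-hard'' to ``$S$ pp-constructs $K_3$'' needs $\text{P}\neq\text{NP}$ to rule out the first alternative. The paper makes this explicit; you should too. Second, in your final paragraph you have the algebraic correspondence backwards: $S$ pp-constructs $K_3$ precisely when $\mathrm{Pol}(S)$ \emph{does} admit a uniformly continuous minor-preserving map to the projection clone, not when it fails to. With the direction corrected, the contrapositive strategy you sketch would be: assume $S$ does \emph{not} pp-construct $K_3$, so no such map exists, and try to parlay this into a tractability-type obstruction for $\Pcsp(G,H)$; as you say, this is essentially the infinite-domain tractability conjecture and is wide open.
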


Note that if P is different from NP, then the tractability conjecture mentioned above implies Conjecture~\ref{conj:K3ppconstruct}. 
The connection between $\Pcsp(G,H)$ for finite graphs $G,H$ and infinite-domain constraint satisfaction remains interesting even outside the realm of reducts of finitely bounded homogeneous structures. 
It has been conjectured in~\cite[Conjecture 1.2]{BrakensiekGuruswami18} that $\Pcsp(C_{2k+1},K_n)$ is NP-hard for every
$k \geq 1$ and $n \geq 3$. This conjecture  implies a positive answer to the following question, which however might be
easier to prove.

\begin{question}\label{quest:hard-graphs}
    Let $G$ be a (not necessarily finite) $k$-colourable graph, for some $k \geq 3$, which is not bipartite. 
    Is $\Csp(G)$ NP-hard? 
\end{question}

For finite $G$, this question has a positive answer by the Hell-Ne\v{s}et\v{r}il theorem~\cite{HellNesetril}.
Also, this question has a positive answer for $3$-colourable $G$ by the NP-hardness of $\Pcsp(C_{2n+1},K_3)$ for
every integer $n \ge 2$~\cite[Theorem 1.3]{krokhinSIAM23}. 
We mention that for $k \geq 3$, there are uncountably many non-bipartite $k$-colourable graphs $G$ with pairwise distinct CSPs,
so the CSP of some of these graphs must be undecidable, because there are only countably many Turing machines. 
Question~\ref{quest:hard-graphs} is open even for graphs that are reducts of finitely bounded homogeneous structures.

Finally, we briefly discuss that the previous connection between PCSPs and infinite graph CSPs
extends to a broader class of problems in algorithmic graph theory. We say that a class of 
graph $\mathcal C$ has bounded chromatic number if there is some integer $k$ such that $\chi(G) \le k$
for every graph $G\in \mathcal C$. With similar arguments as in the proof of Theorem~\ref{thm:NP-hard},
it follows that if $\Pcsp(H,K_k)$ is NP-hard, then, for any class $\mathcal C$ with bounded chromatic
number $k$ and containing all graphs $H'$ with $H' \to H$, it is
NP-hard to test whether an input graph $G$ belongs to $\mathcal C$.

\begin{example}
    Since $\Pcsp(K_3, K_5)$ is  NP-hard  (see, e.g.,~\cite{BartoBKO21}), it follows that for every class
    of graphs $\mathcal C$ with bounded chromatic number $5$, and containing all $3$-colourable graphs,
    it is NP-hard to test whether $G\in \mathcal C$ for an input graph $G$.
\end{example}

Similarly, the conjectured hardness of  $\Pcsp(K_3,K_n)$ for every integer
$n\ge 3$~\cite[Conjecture 1.2]{BrakensiekGuruswami18}
implies a negative answer to the following question (unless P $=$ NP), which we believe is of interest in it own right
from a graph algorithmic point of view.

\begin{question}
    Is there a class of finite graphs $\mathcal C$ with bounded chromatic number and containing all $3$-colourable graphs 
    such that containment in $\mathcal C$ is decidable in polynomial-time? 
\end{question}


\section*{Acknowledgments}
Several discussions with Benjamin Moore and Colin Jahel during different stages of this project are gratefully acknowledged.

\bibliographystyle{abbrv}
\bibliography{global.bib}

\def\cprime{$'$} \def\cprime{$'$} \def\cprime{$'$}
\begin{thebibliography}{10}

\bibitem{BartoBKO21}
L.~Barto, J.~Bul{\'{\i}}n, A.~A. Krokhin, and J.~Opr\v{s}al.
\newblock Algebraic approach to promise constraint satisfaction.
\newblock {\em J. {ACM}}, 68(4):28:1--28:66, 2021.

\bibitem{BKOPP}
L.~Barto, M.~Kompatscher, M.~Ol\v{s}\'{a}k, T.~V. Pham, and M.~Pinsker.
\newblock The equivalence of two dichotomy conjectures for infinite domain constraint satisfaction problems.
\newblock In {\em Proceedings of the 32nd Annual {ACM/IEEE} Symposium on Logic in Computer Science -- LICS'17}, 2017.
\newblock Preprint arXiv:1612.07551.

\bibitem{wonderland}
L.~Barto, J.~Opr\v{s}al, and M.~Pinsker.
\newblock The wonderland of reflections.
\newblock {\em Israel Journal of Mathematics}, 223(1):363--398, 2018.

\bibitem{Book}
M.~Bodirsky.
\newblock {\em Complexity of Infinite-Domain Constraint Satisfaction}.
\newblock Lecture Notes in Logic (52). Cambridge University Press, Cambridge, United Kingdom; New York, NY, 2021.

\bibitem{BPP-projective-homomorphisms}
M.~Bodirsky, M.~Pinsker, and A.~Pongr\'acz.
\newblock Projective clone homomorphisms.
\newblock {\em Journal of Symbolic Logic}, 86(1):148--161, 2021.

\bibitem{maximal-digraphs}
M.~Bodirsky and F.~Starke.
\newblock Maximal digraphs with respect to primitive positive constructability.
\newblock {\em Combinatorica}, 42:997--1010, 2022.

\bibitem{smooth-digraphs}
M.~Bodirsky, F.~Starke, and A.~Vucaj.
\newblock Smooth digraphs modulo primitive positive constructability and cyclic loop conditions.
\newblock {\em International Journal on Algebra and Computation}, 31(5):939--967, 2021.
\newblock Preprint available at ArXiv:1906.05699.

\bibitem{bondy2008}
J.~A. Bondy and U.~S.~R. Murty.
\newblock {\em Graph Theory}.
\newblock Springer, 2008.

\bibitem{BrakensiekGuruswami18}
J.~Brakensiek and V.~Guruswami.
\newblock Promise constraint satisfaction: Structure theory and a symmetric boolean dichotomy.
\newblock In A.~Czumaj, editor, {\em Proceedings of the Twenty-Ninth Annual {ACM-SIAM} Symposium on Discrete Algorithms, {SODA} 2018, New Orleans, LA, USA, January 7-10, 2018}, pages 1782--1801. {SIAM}, 2018.

\bibitem{BrakensiekGuruswami19}
J.~Brakensiek and V.~Guruswami.
\newblock An algorithmic blend of {LP}s and ring equations for promise {CSP}s.
\newblock In T.~M. Chan, editor, {\em Proceedings of the Thirtieth Annual {ACM-SIAM} Symposium on Discrete Algorithms, {SODA} 2019, San Diego, California, USA, January 6-9, 2019}, pages 436--455. {SIAM}, 2019.

\bibitem{brandtCPC8}
S.~Brandt.
\newblock On the structure of dense triangle-free graphs.
\newblock {\em Combinatorics, Probablity and Computing}, 178(3):237--245, 1999.

\bibitem{BulatovFVConjecture}
A.~A. Bulatov.
\newblock A dichotomy theorem for nonuniform {CSP}s.
\newblock In {\em 58th {IEEE} Annual Symposium on Foundations of Computer Science, {FOCS} 2017, {B}erkeley, {CA}, {USA}, {O}ctober 15-17}, pages 319--330, 2017.

\bibitem{Oligo}
P.~J. Cameron.
\newblock {\em Oligomorphic permutation groups}.
\newblock Cambridge University Press, Cambridge, 1990.

\bibitem{guichardJGT17}
D.~R. Guichard.
\newblock Acyclic graph coloring and the complexity of the star chromatic number.
\newblock {\em Journal of Graph Theory}, 17:129--134, 1993.

\bibitem{guzmanAMC438}
S.~Guzm\'an-Pro, P.~Hell, and C.~Hern\'andez-Cruz.
\newblock Describing hereditary properties by forbidden circular orderings.
\newblock {\em Applied Mathematics and Computation}, 438(1):127555, 2023.

\bibitem{hatamiJGT47}
H.~Hatami and R.~Tusserkani.
\newblock On the complexity of the circular chromatic number.
\newblock {\em Journal of Graph Theory}, 47:226--230, 2004.

\bibitem{HellNesetril}
P.~Hell and J.~Ne\v{s}et\v{r}il.
\newblock On the complexity of {H}-coloring.
\newblock {\em Journal of Combinatorial Theory, Series B}, 48:92--110, 1990.

\bibitem{HNBook}
P.~Hell and J.~Ne\v{s}et\v{r}il.
\newblock {\em Graphs and Homomorphisms}.
\newblock Oxford University Press, Oxford, 2004.

\bibitem{krokhinSIAM23}
A.~Krokhin, J.~Opr\v{s}al, M.~Wrochna, and S.~\v{Z}ivn\'y.
\newblock Topology and adjunction in promise constraint satisfaction.
\newblock {\em {SIAM} J. Comput.}, 52(1):38--79, 2023.

\bibitem{linDAM116}
M.~C. Lin, F.~J. Soulignac, and J.~L. Szwarcfiter.
\newblock Normal helly circular-arc graphs and its subclasses.
\newblock {\em Discrete Mathematics}, 116(7-8):1037--1059, 2013.

\bibitem{linDM309}
M.~C. Lin and J.~L. Szwarcfiter.
\newblock Characterizations and recognition of circular-arc graphs and subclasses: A survey.
\newblock {\em Discrete Mathematics}, 309:5618--5635, 2009.

\bibitem{pachDM37}
J.~Pach.
\newblock Graphs whose every independent set has a common neighbour.
\newblock {\em Discrete Mathematics}, 37:217--228, 1981.

\bibitem{vinceJGT12}
A.~Vince.
\newblock Star chromatic number.
\newblock {\em Journal of Graph Theory}, 12:551--559, 1998.

\bibitem{zhuDM229}
X.~Zhu.
\newblock Circular chromatic number: a survey.
\newblock {\em Discrete Mathematics}, 229(1-3):371--410, 2001.

\bibitem{Zhuk20}
D.~Zhuk.
\newblock A proof of the {CSP} dichotomy conjecture.
\newblock {\em J. {ACM}}, 67(5):30:1--30:78, 2020.

\bibitem{ZhukFVConjecture}
D.~N. Zhuk.
\newblock A proof of {CSP} dichotomy conjecture.
\newblock In {\em 58th {IEEE} Annual Symposium on Foundations of Computer Science, {FOCS} 2017, {B}erkeley, {CA}, {USA}, {O}ctober 15-17}, pages 331--342, 2017.
\newblock https://arxiv.org/abs/1704.01914.

\end{thebibliography}

\end{document}